\documentclass[a4paper,11pt]{article}
\usepackage{adjustbox}
\usepackage{aligned-overset}
\usepackage{amsmath,amsthm}
\usepackage{authblk}
\usepackage[style=numeric-comp,maxbibnames=99,bibencoding=utf8,giveninits=true,backend=biber]{biblatex}
\usepackage{booktabs}
\usepackage{longtable}
\usepackage{multirow}
\usepackage{cancel}
\usepackage{cases}
\usepackage{colortbl}
\usepackage[hmargin={26mm,26mm},vmargin={30mm,35mm}]{geometry}
\usepackage{nicefrac}
\usepackage[colorlinks,allcolors={blue}]{hyperref}
\usepackage{paralist}
\usepackage{subcaption}
\usepackage{enumitem}
\usepackage[compact,small]{titlesec}
\usepackage{tikz-cd}
\usetikzlibrary{arrows}
\usepackage{multicol}
\usepackage{comment}

\usepackage{newtxtext}
\usepackage{newtxmath}

\bibliography{ddr_bgg}

\newcommand{\email}[1]{\href{mailto:#1}{#1}}

\numberwithin{equation}{section}

\newcommand{\adj}{\ast}

\newtheorem{theorem}{Theorem}
\newtheorem{proposition}[theorem]{Proposition}
\newtheorem{lemma}[theorem]{Lemma}

\theoremstyle{remark}
\newtheorem{remark}[theorem]{Remark}
\theoremstyle{definition}
\newtheorem{assumption}[theorem]{Assumption}

\newcommand{\Real}{\mathbb{R}}

\DeclareRobustCommand{\bvec}[1]{\boldsymbol{#1}}
\newcommand{\uvec}[1]{\underline{\bvec{#1}}}
\newcommand{\cvec}[1]{\bvec{\mathcal{#1}}}

\newcommand{\Cspace}[1]{\mathcal C_{#1}}

\DeclareMathOperator{\GRAD}{\bf grad}
\DeclareMathOperator{\CURL}{\bf curl}
\DeclareMathOperator{\DIV}{div}
\DeclareMathOperator{\VDIV}{\bf div}
\DeclareMathOperator{\ROT}{rot}
\DeclareMathOperator{\VROT}{\bf rot}
\DeclareMathOperator{\hess}{\nabla^2}

\newcommand{\compl}{{\rm c}}

\newcommand{\symbolproj}{\pi}
\newcommand{\lproj}[2]{\symbolproj_{\mathcal{P},#2}^{#1}}
\newcommand{\vlproj}[2]{\boldsymbol{\symbolproj}_{\cvec{P},#2}^{#1}}

\newcommand{\Rcproj}[2]{\bvec{\symbolproj}_{\cvec{R},#2}^{\compl,#1}}

\newcommand{\Xgrad}[2]{\underline{X}_{\GRAD,#2}^{#1}}
\newcommand{\Xcurl}[2]{\underline{\bvec{X}}_{\CURL,#2}^{#1}}

\newcommand{\Igrad}[2]{\underline{I}_{\GRAD,#2}^{#1}}

\newcommand{\uGh}[1]{\uvec{G}_h^{#1}}

\newcommand{\uCh}[1]{C_h^{#1}}

\newcommand{\edges}[1]{\mathcal{E}_{#1}}
\newcommand{\vertices}[1]{\mathcal{V}_{#1}}

\newcommand{\ET}{\edges{T}}

\newcommand{\VT}{\vertices{T}}

\newcommand{\VE}{\vertices{E}}

\newcommand{\normal}{\bvec{n}}
\newcommand{\tangent}{\bvec{t}}

\newcommand{\letterPoly}{\mathcal{P}}
\newcommand{\Poly}[1]{\letterPoly^{#1}}
\newcommand{\Roly}[1]{\cvec{R}^{#1}}
\newcommand{\cRoly}[1]{\cvec{R}^{\compl,#1}}

\newcommand{\stokes}{2}
\newcommand{\ddr}{1}

\newcommand{\norm}[2]{\|#2\|_{#1}}
\newcommand{\seminorm}[2]{|#2|_{#1}}
\newcommand{\vvvert}{\vert\kern-0.25ex\vert\kern-0.25ex\vert}
\newcommand{\tnorm}[2]{\vvvert #2\vvvert_{#1}}

\newcommand{\nSgrad}[2]{\tnorm{\stokes,#1}{#2}}
\newcommand{\nSrot}[2]{\tnorm{1,#1}{#2}}

\DeclareMathOperator{\Ker}{Ker}
\DeclareMathOperator{\Image}{Im}

\newcommand{\Th}{\mathcal{T}_h}
\newcommand{\Eh}{\mathcal{E}_h}
\newcommand{\Vh}{\mathcal{V}_h}

\newcommand{\Pgrad}[2]{\bvec{P}_{\GRAD,#2}^{#1}}
\newcommand{\Prot}[2]{\bvec{P}_{\VROT,#2}^{#1}}

\newcommand{\SPgradd}{P_{\stokes,T}^{k+3}}
\newcommand{\SPgrad}{P_{\stokes,T}^{k+1}}
\newcommand{\SPgradh}{P_{\stokes,h}^{k+1}}

\newcommand{\SProt}{\bvec{P}_{\ROT,T}^{k}}
\newcommand{\SProth}{\bvec{P}_{\ROT,h}^{k}}

\newcommand{\SC}[2]{\bvec{S}_{\CURL,#1}^{#2}}
\newcommand{\SR}[2]{\bvec{S}_{\ROT,#1}^{#2}}

\newcommand{\XSgrad}[1]{\underline{H}_{2}^{k}(#1)}
\newcommand{\XSgrado}[1]{\underline{H}_{2,0}^{k}(#1)}
\newcommand{\XSrot}[1]{\underline{H}_{1}^{k+1}(#1)^2}

\newcommand{\ISgrad}{\underline{I}_{\stokes,h}^{k}}
\newcommand{\ISgradT}{\underline{I}_{\stokes,T}^{k}}

\newcommand{\SGRAD}[1]{\underline{\bvec{G}}^{k}_{2,#1}}
\newcommand{\SROT}[1]{R^k_{1,#1}}

\newcommand{\Gqen}{G^{\normal}_{q,E}}
\newcommand{\Gqv}{\bvec G_{q,V}}
\newcommand{\Gqet}{G^{\tangent}_{2,E}}
\newcommand{\nablaT}{\bvec{G}_{2,T}^{k-1}}
\newcommand{\nablaTfull}{\bvec{G}_{2,T}^{k+2}}

\newcommand{\gammaSfull}[1]{\gamma^{k+3}_{\stokes,#1}}
\newcommand{\gammaS}{\gamma^{k+1}_{\stokes,\partial T}}

\newcommand{\Id}{\mathrm{Id}}
\newcommand{\sskw}{\mathop{\mathrm{sskw}}}
\newcommand{\sskwh}{\sskw\nolimits_h}

\newcommand{\XdRgrad}[1]{\underline{H}_{1}^{k+1}(#1)}
\newcommand{\XdRgrado}[1]{\underline{H}_{1,0}^{k+1}(#1)}
\newcommand{\XdRrot}[1]{\underline{\boldsymbol{H}}_{\operatorname{\boldsymbol{\mathrm{rot}}}}^{k+1}(#1)}
\newcommand{\XdRrotk}[1]{\underline{\boldsymbol{H}}_{\operatorname{\boldsymbol{\mathrm{rot}}}}^{k}(#1)}
\newcommand{\tXdRgrad}[1]{\XdRgrad{#1}^2}
\newcommand{\tXdRgrado}[1]{\XdRgrado{#1}^2}
\newcommand{\tXdRrot}[1]{\XdRrot{#1}^2}

\newcommand{\tGRAD}[1]{\underline{\bvec G}_{1,#1}^{k+1}}
\newcommand{\tROT}[1]{\bvec{R}_{\operatorname{\boldsymbol{\mathrm{rot}}},#1}^{k+1}}
\newcommand{\IdRgrad}{\underline{\bvec{I}}_{\ddr,h}^{k+1}}
\newcommand{\IdRgradT}{\underline{\bvec{I}}_{\ddr,T}^{k+1}}
\newcommand{\IdRrot}{\underline{\bvec{I}}_{\operatorname{\boldsymbol{\mathrm{rot}}},h}^{k+1}}
\newcommand{\IdRrotT}{\underline{\bvec{I}}_{\operatorname{\boldsymbol{\mathrm{rot}}},T}^{k+1}}

\newcommand{\gammadR}[1]{\gamma^{k+2}_{\ddr,#1}}

\newcommand{\XdRrotS}[1]{\underline{\boldsymbol{H}}_{\operatorname{\boldsymbol{\mathrm{rot}}}}^{k+1}(#1,\mathbb{S})}
\newcommand{\XdRrotSo}[1]{\underline{\boldsymbol{H}}_{\operatorname{\boldsymbol{\mathrm{rot}}},0}^{k+1}(#1,\mathbb{S})}

\newcommand{\Hess}[1]{\underline{\bvec{\mathcal{H}}}^{k+1}_{#1}}

\newcommand{\redex}[1]{\mathfrak{#1}}

\newcommand{\Egradh}{\underline{\redex{E}}_{\GRAD,h}}

\newcommand{\Rgradh}{\underline{\redex{R}}_{\GRAD,h}}

\newcommand{\Eroth}{\underline{\bvec{\redex{E}}}_{\ROT,h}}

\newcommand{\Rroth}{\underline{\redex{R}}_{\ROT,h}}

\newcommand{\dof}[2]{\underline{#1}_{#2}}
\newcommand{\dofh}[1]{\underline{#1}_h}
\newcommand{\dofT}[1]{\underline{#1}_T}

\newcommand{\injrot}{\bvec i_{\ROT,T}}

\newcommand{\acSgrad}{\mathcal{E}_{\stokes,h}}
\newcommand{\acSrot}{\mathcal{E}_{\ROT,h}}

\usepackage{marginnote}

\usepackage{pgfplots,pgfplotstable}
\usepackage{tikz-cd}

\graphicspath{{figures/}}

\newcommand{\drawPolygon}[7]{%
  \def\nsides{#1}%
  \def\radius{#2}%
  \def\showNodal{#3}%
  \def\showGradients{#4}%
  \def\pEdge{#5}%
  \def\varsEdge{1}%
  \def\pNormal{#6}%
  \def\varsNormal{1}%
  \def\nInterior{#7}%

  \foreach \i in {1,...,\nsides} {
    \pgfmathsetmacro{\angle}{360/\nsides*(\i - 1)}
    \pgfmathsetmacro{\x}{\radius*cos(\angle)}
    \pgfmathsetmacro{\y}{\radius*sin(\angle)}
    \coordinate (P\i) at (\x,\y);
  }

  \draw[thick] (P1)
  \foreach \i in {2,...,\nsides} { -- (P\i) }
  -- cycle;

  \ifnum\showNodal=1
  \foreach \i in {1,...,\nsides} {
    \fill (P\i) circle (2pt);
  }
  \fi

  \ifnum\showGradients=1
  \foreach \i in {1,...,\nsides} {
    \draw (P\i) circle (4pt);
  }
  \fi

  \ifnum\pEdge>-1
  \pgfmathtruncatemacro{\nEdge}{int((\pEdge+\varsEdge)!/(\pEdge!*\varsEdge!))}
  \foreach \i in {1,...,\nsides}{
    \pgfmathtruncatemacro{\j}{mod(\i,\nsides)+1}%
    \foreach \k in {1,...,\nEdge}{
      \pgfmathsetmacro{\t}{\k/(\nEdge+1)}
      \path (P\i) -- (P\j) coordinate[pos=\t] (E\i\k);
      \fill (E\i\k) circle (1.5pt);
    }
  }
  \fi

  \ifnum\pNormal>-1
  \pgfmathtruncatemacro{\nNorm}{int((\pNormal+\varsNormal)!/(\pNormal!*\varsNormal!))}
  \foreach \i in {1,...,\nsides}{
    \pgfmathtruncatemacro{\j}{mod(\i,\nsides)+1}
    \pgfmathsetmacro{\angleA}{360/\nsides*(\i - 1)}
    \pgfmathsetmacro{\angleB}{360/\nsides*(\j - 1)}
    \pgfmathsetmacro{\edgeang}{atan2(sin(\angleB)-sin(\angleA),cos(\angleB)-cos(\angleA))}
    \pgfmathsetmacro{\normang}{\edgeang-90}
    \foreach \k in {1,...,\nNorm}{
      \pgfmathsetmacro{\t}{\k/(\nNorm+1)}
      \path (P\i) -- (P\j) coordinate[pos=\t] (N\i\k);
      \draw[-{Latex[length=4pt]}, thick]
      (N\i\k) -- ++({0.6*cos(\normang)},{0.6*sin(\normang)});
    }
  }
  \fi

  \ifnum\nInterior>0
  \ifnum\nInterior=1
  \fill (0,0) circle (1.5pt);
  \else
  \pgfmathsetmacro{\rInt}{0.35*\radius}
  \foreach \k in {1,...,\nInterior} {
    \pgfmathsetmacro{\theta}{360/\nInterior*(\k - 1)}
    \pgfmathsetmacro{\xi}{\rInt*cos(\theta)}
    \pgfmathsetmacro{\yi}{\rInt*sin(\theta)}
    \fill (\xi,\yi) circle (1.5pt);
  }
  \fi
  \fi
}

\newcommand{\logLogSlopeTriangle}[5]{
  \pgfplotsextra{
    \pgfkeysgetvalue{/pgfplots/xmin}{\xmin}
    \pgfkeysgetvalue{/pgfplots/xmax}{\xmax}
    \pgfkeysgetvalue{/pgfplots/ymin}{\ymin}
    \pgfkeysgetvalue{/pgfplots/ymax}{\ymax}

    \pgfmathsetmacro{\xArel}{#1}
    \pgfmathsetmacro{\yArel}{#3}
    \pgfmathsetmacro{\xBrel}{#1-#2}
    \pgfmathsetmacro{\yBrel}{\yArel}
    \pgfmathsetmacro{\xCrel}{\xArel}

    \pgfmathsetmacro{\lnxB}{\xmin*(1-(#1-#2))+\xmax*(#1-#2)} 
    \pgfmathsetmacro{\lnxA}{\xmin*(1-#1)+\xmax*#1} 
    \pgfmathsetmacro{\lnyA}{\ymin*(1-#3)+\ymax*#3} 
    \pgfmathsetmacro{\lnyC}{\lnyA+#4*(\lnxA-\lnxB)}
    \pgfmathsetmacro{\yCrel}{\lnyC-\ymin)/(\ymax-\ymin)}

    \coordinate (A) at (rel axis cs:\xArel,\yArel);
    \coordinate (B) at (rel axis cs:\xBrel,\yBrel);
    \coordinate (C) at (rel axis cs:\xCrel,\yCrel);

    \draw[#5]   (A)-- node[pos=0.5,anchor=north] {\scriptsize{1}}
    (B)--
    (C)-- node[pos=0.,anchor=west] {\scriptsize{#4}} 
    cycle;
  }
}

\begin{document}

\title{Analytical properties of polygonal Stokes and BGG Hessian complexes with application to Kirchhoff--Love plates}

\author[1]{Daniele A. Di Pietro}
\author[1,2]{J\'er\^ome Droniou}
\author[3]{Kaibo Hu}
\author[1]{Arax Leroy}
\affil[1]{IMAG, Univ. Montpellier, CNRS, Montpellier, France, \email{daniele.di-pietro@umontpellier.fr}, \email{jerome.droniou@cnrs.fr}, \email{arax.leroy@umontpellier.fr}}
\affil[2]{School of Mathematics, Monash University, Melbourne, Australia}
\affil[3]{Mathematical Institute, University of Oxford, UK. \email{kaibo.hu@maths.ox.ac.uk}}
\maketitle

\begin{abstract}
  We develop and analyse a new arbitrary-order method for the Kirchhoff--Love plate problem on general polygonal meshes. The construction relies on a discrete Hessian complex obtained through the Bernstein--Gelfand--Gelfand construction, which stacks a Stokes complex on top of a tensorised de Rham complex.
  A key ingredient of the analysis is a detailed study of the former.
  We specifically establish primal and adjoint consistency estimates, as well as uniform Poincaré inequalities, and use these results to derive the analytical properties of the resulting discrete Hessian complex. The proposed method is then shown to be coercive and to converge with order $k+1$ with respect to the mesh size, where $k\ge 0$ is the polynomial degree of the complex. Numerical experiments on polygonal meshes confirm the theoretical convergence rates.
  \medskip\\
  \textbf{Key words.} Stokes complex, Hessian complex, BGG construction, polygonal method, Kirchhoff--Love plate problem, error analysis.
  \medskip\\
  \textbf{MSC2020.} 65N30, 65N12, 74K20
\end{abstract}


\section{Introduction}\label{sec:Introduction}

In this work, we develop a new arbitrary-order method on general polygonal meshes for the numerical approximation of the Kirchhoff--Love plate model.
The design of the discrete spaces and operators hinges on a discrete Hessian complex derived, through the Bernstein--Gelfand--Gelfand (BGG) construction \cite{Cap.Slovak.ea:01,Bernstein.Gelfand.ea:75}, from discrete Stokes and de Rham complexes \cite{Di-Pietro.Droniou.ea:26}.
After establishing preliminary results for the discrete Stokes and Hessian complexes in a general framework, we apply them to the analysis of the new method, whose performance is showcased through a comprehensive set of numerical examples.
\smallskip

Let $\Omega$ be a two-dimensional polygonal domain, and denote by $H_\ell(\Omega)$ the usual Sobolev space spanned by scalar functions $\Omega\to\Real$ whose derivatives up to order $\ell$ are square-integrable.
We consider the following BGG diagram, which stacks the \emph{Stokes complex} (i.e., a version of the de Rham complex with enhanced regularity) above a tensorised version of the standard de Rham complex:
\begin{equation}\label{eq:double.complex.cont}
  \begin{tikzcd}[column sep=2.5em]
    \text{Stokes:}
    & 0 \arrow{r}{}
    & H_2(\Omega)\arrow{r}{\GRAD}
    & H_1(\Omega)^2\arrow[dashed]{r}{\ROT}
    & L^2(\Omega)\arrow[dashed]{r}{}
    & 0 \\
    \text{de Rham:}
    & 0 \arrow[dashed]{r}{}
    & H_1(\Omega)^2\arrow{r}{\GRAD}\arrow[leftrightarrow]{ur}{\Id}
    & \boldsymbol{H}_{\VROT}(\Omega)^2\arrow{r}{\VROT}\arrow[dashed]{ur}{\sskw}
    & L^2(\Omega)^2\arrow{r}{}
    & 0
  \end{tikzcd}
\end{equation}
where, for suitable scalar-, vector- or tensor-valued functions (respectively denoted by $q$, $\bvec{v}$, $\bvec{\tau}$ and with usually indexed components),
\begin{align*}
  \GRAD q &\coloneq \begin{pmatrix} \partial_1 q \\ \partial_2 q
  \end{pmatrix},\quad
  \ROT \bvec{v} \coloneq \partial_1 v_2 - \partial_2 v_1,\quad
  \GRAD \bvec{v} \coloneq \begin{pmatrix}
    \partial_1 {v}_1 & \partial_2 {v}_1 \\
    \partial_1 {v}_2 & \partial_2 {v}_2
  \end{pmatrix},\\
  \quad
  \VROT \bvec{\tau} &\coloneq \begin{pmatrix}
    \ROT (\tau_{1,1}, \tau_{1,2})^\perp \\
    \ROT (\tau_{2,1}, \tau_{2,2})^\perp
  \end{pmatrix}=\begin{pmatrix}
  \partial_1\tau_{1,2}-\partial_2\tau_{1,1} \\
  \partial_1\tau_{2,2}-\partial_2\tau_{2,1}
  \end{pmatrix},\quad
  \sskw \bvec{\tau}=\tau_{12}-\tau_{21},
\end{align*}
and $ \boldsymbol{H}_{\VROT}(\Omega)^2\coloneq \{\bvec{\tau}\in L^2(\Omega)^{2\times 2}\,:\,\VROT\bvec{\tau}\in L^2(\Omega)^2\}$.

From this diagram, one can derive two complexes linked to plate models:
the \emph{twisted complex}, whose Hodge Laplacian yields the energy functional of the Reissner--Mindlin model for thick plates
and the \emph{BGG complex}, obtained by following the solid arrows from the top left to the bottom right in \eqref{eq:double.complex.cont}.
In the present setting, this corresponds to the Hessian complex:
\begin{equation}\label{BGG-Hessian-HD}
  \begin{tikzcd}
    \text{Hessian:}
    & 0 \arrow{r}
    & H_{2}(\Omega) \arrow{r}{\hess}
    & \boldsymbol H_{\VROT}(\Omega, \mathbb{S})\arrow{r}{\VROT}
    & L^{2}(\Omega)^{2}  \arrow{r}{} &0,
  \end{tikzcd}
\end{equation}
where $\boldsymbol H_{\VROT}(\Omega, \mathbb{S})$ denotes the space of square-integrable $\bvec{\tau}:\Omega\to \mathbb{S}$, where $\mathbb{S}$ is the space of $2\times 2$ symmetric matrices, such that $\VROT\bvec{\tau}$ is square-integrable.
The Hessian complex is linked to the Kirchhoff--Love model of thin plates.
For simplicity, we consider an isotropic material and normalise the model so that the material constants are omitted from the notation.
Given a surface load $f : \Omega \to \Real$ orthogonal to the plate, the unknown of the model is the deflection $u : \Omega \to \Real$ satisfying
\begin{equation}\label{eq:KL}
  \DIV \VDIV \hess u = f  \qquad \text{in $\Omega$,}
\end{equation}
where $\VDIV$ is the row-wise divergence operator acting on matrix-valued fields.
As shown in Section \ref{sec:hessian.complex}, the Hodge Laplacian associated with the complex \eqref{BGG-Hessian-HD} corresponds to complementing \eqref{eq:KL} with the following natural boundary conditions (see \cite{Pauly.Zulehner:20,Pauly.Schomburg:24} and also \cite[Section 1.3]{Pauly.Valli:25}), in which $\normal_\Omega$ is the unit outer normal to $\Omega$ on $\partial\Omega$:
\begin{equation}\label{eq:KL:bc:natural}
  (\nabla^2 u) \, \normal_{\Omega} = \bvec{0}\text{ and } \VDIV (\nabla^2 u) \cdot \normal_{\Omega} = 0 \quad \text{on $\partial \Omega$}.
\end{equation}
These conditions express the fact that the plate is unconstrained at the boundary.
Already at the continuous level, the study of the corresponding variational formulation involves technicalities; see, e.g., \cite{Pauly.Valli:25}.
For this reason, and since practical applications often involve plates that are clamped along the boundary, we instead focus on the essential boundary conditions
\begin{equation}\label{eq:KL:bc:essential}
  u = \nabla u \cdot \normal_{\Omega} = 0 \quad \text{on $\partial \Omega$}.
\end{equation}
This requires considering a variant of \eqref{BGG-Hessian-HD} in which each space is replaced with its zero-trace subspace.
\smallskip

The first main contribution of this work is the design, analysis, and numerical validation of a new scheme for problem \eqref{eq:KL} supplemented with the essential boundary conditions \eqref{eq:KL:bc:essential}.
The scheme supports general meshes composed of polygonal elements and relies on the discrete counterpart of the BGG diagram \eqref{eq:double.complex.cont} recently developed in \cite{Di-Pietro.Droniou.ea:26}. This construction yields a discrete analogue of the Hessian complex \eqref{BGG-Hessian-HD} whose cohomology is isomorphic to that of the continuous complex.
The method is coercive by design, and we prove that, when using the discrete Hessian complex of polynomial degree $k \ge 0$, the scheme converges with order $h^{k+1}$ (where $h$ denotes the mesh size) in the natural energy norm.

The finite element discretisation of plate models has a long history. Some of the earliest finite elements, such as the Argyris element \cite{argyris1968tuba} and the Hsieh--Clough--Tocher (HCT) element \cite{clough1965finite}, were developed for solving the biharmonic equations arising from the Kirchhoff--Love plate model. These elements are $C^1$-conforming and achieve the required continuity either through supersmoothness, i.e., higher-order derivative degrees of freedom at vertices (Argyris), or through a subdivision of each triangle into subelements (HCT).

Along a different line of development, the construction of stress--displacement finite element pairs for linear elasticity with polynomial shape functions remained a major challenge until the first successful construction by Arnold and Winther \cite{Arnold.Winther:02}. Their work was motivated by the elasticity complex, which in two dimensions can be viewed as a rotated analogue of the Hessian complex. The finite element complexes associated with both the Arnold--Winther and Hu--Zhang elements begin with the Argyris element \cite{Arnold.Falk.ea:06,Christiansen.Hu.ea:18}. Furthermore, a Hessian/elasticity complex can also be constructed starting from the HCT element \cite{Christiansen.Hu:23}, leading to a variant of the Johnson--Mercier element \cite{Johnson.Mercier:78}. Corresponding BGG diagrams were developed in \cite{Arnold.Falk.ea:06,Christiansen.Hu:23}. These diagrams additionally yield a discretisation of the Reissner--Mindlin plate model, which can be interpreted as the Hodge Laplacian of the twisted complex associated with the diagram \cite{Cap.Hu:24}.

Polygonal discretisation methods have also been developed for problem \eqref{eq:KL}--\eqref{eq:KL:bc:essential}.
In \cite{Brezzi.Marini:13} (see also \cite{Chinosi.Marini:16}), the authors propose and analyse a Virtual Element method based on a $C^1$-conforming space, which bears similarities with the approach considered here (see Remark \ref{rem:vem.comparison} below).
Despite using a Hessian reconstruction of one degree lower than the one considered here, the resulting method achieves comparable convergence rates.
Virtual Element methods based on non-$C^1$-conforming spaces are studied in \cite{Antonietti.Manzini.ea:18} and \cite{Zhao.Chen.ea:16}.
The fully discrete paradigm has also been explored to derive numerical schemes for the Kirchhoff--Love problem. We refer in particular to \cite{Bonaldi.Di-Pietro.ea:18}, where a Hybrid High-Order method is proposed, and to \cite{Di-Pietro.Droniou:23*2}, where, following the Discrete de Rham paradigm, the authors develop a discretisation for the mixed formulation of the problem.
We also mention \cite{wangwang:15}, which designs and analyses Weak Galerkin methods on polygonal meshes for the biharmonic equation.
\smallskip

The theoretical analysis of the proposed scheme heavily relies on the analytical properties of the discrete Stokes and Hessian complexes underlying its construction.
The second main contribution of this work is precisely a comprehensive study of these properties, completing the derivation and analysis of algebraic properties initiated in \cite{Di-Pietro.Droniou.ea:26}.
In particular, we establish primal and adjoint consistency estimates in the spirit of \cite[Section 6]{Di-Pietro.Droniou:23}, as well as uniform Poincar\'e-type inequalities.
\smallskip

The rest of this work is organised as follows.
In Section~\ref{sec:hessian.complex} we recall the definition of the Hodge Laplacian operator together with the associated Hessian complex problem~\eqref{BGG-Hessian-HD}. In Section~\ref{sec:discrete.bgg} we introduce the discrete setting and recall the discrete counterpart of the BGG diagram~\eqref{eq:double.complex.cont}. The analytical properties of the discrete diagram are established in Section~\ref{sec:analytical_properties}. Finally, in Section~\ref{sec:kirchhoff} we apply the Hodge Laplacian operator to the discrete Hessian complex in order to derive a discrete scheme for the Kirchhoff--Love problem and analyse its convergence. Three appendices conclude the article. In Appendix \ref{appendix:abstract.poincare} we develop a generic framework (used in Section \ref{sec:poincare}) for transferring Poincaré inequalities from one complex to another. Appendix \ref{appendix:P.nabla.k+3.T} discusses a potential reconstruction on the discrete $H_2$-space of the Stokes complex, which is of higher primal consistency degree than the one in Section \ref{sec:potential.XSgradT} but fails to achieve higher adjoint consistency degree. Finally, important notations used throughout the article are gathered in Appendix \ref{appendix:notations} for ease of reference.


\section{The Hodge Laplacian operator of the Hessian complex}\label{sec:hessian.complex}

Consider an abstract Hilbert complex, together with its adjoint operators
\[
\begin{tikzcd}[ampersand replacement=\&, column sep=4em]
  X_{k-1}  \arrow[r, black, "D^{k-1}", shift=({0,0.3em})] \& X_k \arrow[r, black, "D^{k}", shift=({0,0.3em})] \arrow[l, black, "D^{\adj}_{k-1}", shift=({0,-0.3em})] \& X_{k+1} \arrow[l, black, "D^{\adj}_{k}",shift=({0,-0.3em})].
\end{tikzcd}
\]
Note that the operators $D^\ell$ in this complex are closed and densely defined, which gives meaning to their adjoints $D^\adj_\ell$ for the ($L^2$-like) inner products $\langle\cdot,\cdot\rangle_{X_\ell}$ on the spaces $X_\ell$; see \cite[Section 4.1]{Arnold:18} for details.

The Hodge Laplacian on $X_k$ is defined by $\mathcal{L} \coloneqq D^{\adj}_{k}D^{k}+ D^{k-1}D^{\adj}_{k-1}$ and, for a source term $F\in (X_k)'$, the corresponding weak Hodge--Laplace problem reads: Find $u \in X_k$ such that
\begin{equation*} 
  \langle D^{k}u , D^{k}q \rangle_{X_{k+1}} + \langle D^{\adj}_{k-1}u, D^{\adj}_{k-1}q\rangle_{X_{k-1}} = F(q)\qquad\forall q \in X_k.
\end{equation*}
Dealing with adjoint operators would require to explicitly define their domain, which embeds boundary conditions; our discussion will be easier, from the PDE point of view, if we consider a mixed formulation of the Hodge--Laplace problem, consisting in introducing the additional variable $\omega=D_{k-1}^\adj u$: Find $(u,\omega) \in X_k\times X_{k-1}$ such that
\begin{equation} \label{eq:hodge.laplace}
  \begin{aligned}
    \langle D^{k}u , D^{k}q \rangle_{X_{k+1}} + \langle D^{k-1}\omega, q\rangle_{X_k} &= F(q)&&\quad\forall q \in X_k,\\
    \langle \omega, \mu\rangle_{X_{k-1}} - \langle u, D^{k-1}\mu\rangle_{X_k}&=0&&\quad\forall \mu\in X_{k-1}.
  \end{aligned}
\end{equation}

We can now describe the Hodge--Laplace problem associated with the first part of the Hessian complex \eqref{BGG-Hessian-HD}, when all the spaces are equipped with the $L^2$-inner products: 
For $X_k=H_2(\Omega)$, we have $X_{k-1}=\{0\}$ and $D^k=\hess$, so \eqref{eq:hodge.laplace} reads: Find $u\in H_2(\Omega)$ such that
\begin{equation*} 
  (\hess u, \hess q)_{L^2(\Omega)^{2\times 2}}=F(q)\qquad\forall q\in H_2(\Omega).
\end{equation*}
This is the standard weak formulation of the Kirchhoff--Love problem \eqref{eq:KL} (for $F(q)=(f,q)_{L^2(\Omega)}$) with natural boundary conditions \eqref{eq:KL:bc:natural}.

We can also consider the Hessian complex with homogeneous boundary conditions. Defining the following subspaces of $H_2(\Omega)$ and $ H_{\VROT}(\Omega, \mathbb{S})$:
\[
\begin{aligned}
  H_{2,0}(\Omega)
  & \coloneq \left\{
  u\in H_2(\Omega) \,:\, u = \nabla u \cdot\normal_{\Omega} =0  \text{ on } \partial\Omega
  \right\},
  \\
 \boldsymbol H_{\VROT,0}(\Omega, \mathbb{S}) &\coloneq
  \left\{
  \bvec{\xi} \in \boldsymbol H_{\VROT}(\Omega, \mathbb{S}) \,:\, \bvec \xi \, \tangent_{\Omega} = \bvec{0} \text{ on } \partial\Omega
  \right\},
\end{aligned}
\]
the Hessian complex with homogeneous boundary conditions reads
\begin{equation}\label{eq:bgg.complex.dirichlet}
  \begin{tikzcd}
    0 \arrow{r}
    & H_{2,0}(\Omega) \arrow{r}{\hess}
    & \boldsymbol H_{\VROT,0}(\Omega, \mathbb{S}) \arrow{r}{\VROT}
    & L^{2}(\Omega)^{2} \arrow{r}
    & 0.
  \end{tikzcd}
\end{equation}
The associated Hodge--Laplace problem is the same as above, but with $H_2(\Omega)$ replaced by $H_{2,0}(\Omega)$: Find $u\in H_{2,0}(\Omega)$ such that
\begin{equation} \label{eq:KL:weak:essential}
  (\hess u, \hess q)_{L^2(\Omega)^{2\times 2}}=F(q)\qquad\forall q\in H_{2,0}(\Omega),
\end{equation}
which corresponds to the Kirchhoff--Love problem \eqref{eq:KL} with essential (clamped) boundary conditions \eqref{eq:KL:bc:essential}.


\section{Discrete setting}\label{sec:discrete.bgg}

In this section we establish the discrete setting (mesh, polynomial spaces) and briefly recall the discrete counterpart of the BGG diagram \eqref{eq:double.complex.cont} introduced in \cite{Di-Pietro.Droniou.ea:26}. We then introduce the potential reconstructions and discrete $L^2$-products on the spaces of the diagram.

\subsection{Mesh and inequalities up to a constant}\label{sec:setting}

We denote by $\mathcal{M}_h = (\Th, \Eh, \Vh)$ a polygonal mesh of $\Omega$, where $\Th$ denotes a finite family of non-overlapping open polygonal elements $T$ with diameter $h_T$, such that $\overline{\Omega} = \bigcup_{T \in \Th} \overline{T}$; the mesh size is $h \coloneqq \max_{T \in \Th} h_T > 0$. The set $\Eh$ gathers a finite collection of open straight edges $E$, each of length $h_E$, while $\Vh$ denotes the set of vertices $V$ (with position vectors $\bvec{x}_V$), corresponding to the endpoints of edges in $\Eh$.

We assume that the pair $(\Th,\Eh)$ satisfies the compatibility conditions of \cite[Definition~1.4]{Di-Pietro.Droniou:20}. In particular, every edge lies on the boundary of at least one element, and the boundary of each element $T \in \Th$ is obtained as the union of the closures of the edges in the set $\ET$.

We also introduce the following notations or objects:
\begin{itemize}[itemsep=0em,leftmargin=1.5em]
\item $\mathcal{V}_Y$: set of vertices of $Y \in \Th \cup \Eh$.
\item $\tangent_E$: fixed tangent vector to $E\in\Eh$, determining its orientation.
\item $\normal_E$: normal vector to $E\in\Eh$ such that $(\tangent_E,\normal_E)$ is positively oriented.
\item $\omega_{TE}\in\{-1, 1\}$: orientation of $E\in\ET$ relative to $T\in\Th$, such that $\omega_{TE}\normal_E$ points outside $T$.
\item $\omega_{EV}\in\{-1,1\}$: orientation of $V\in\mathcal{V}_E$ relative to $E\in\Eh$, such that $\omega_{EV}\tangent_E$ points towards $V$.
\end{itemize}

Moreover, throughout the rest of the paper, the notation $a \lesssim b$ means that $a \le Cb$, where the constant $C$ depends only on $\Omega$, the mesh regularity parameter (see \cite[Assumption 7.6]{Di-Pietro.Droniou:20}), and, when polynomial functions are involved, the corresponding polynomial degree. The notation $a \simeq b$ means ``$a \lesssim b$ and $b \lesssim a$''.

\subsection{Polynomial spaces}

For any $Y\in\Th\cup\Eh$, we denote by $\Poly{\ell}(Y)$ the restriction to $Y$ of bivariate polynomials of total degree at most $\ell$, and set $\Poly{\ell}(Y) \coloneqq \{0\}$ when $\ell\le -1$. We also consider the zero-average subspace $\Poly{0,\ell}(Y)\subset\Poly{\ell}(Y)$.
The $L^2$-orthogonal projector on $\Poly{\ell}(Y)$ is denoted by $\lproj{\ell}{Y}:L^2(Y)\to\Poly{\ell}(Y)$, and we use the bold notation $\vlproj{\ell}{Y}$ for both its vector- and tensor-valued counterparts obtained applying $\lproj{\ell}{Y}$ component-wise.

For $\bullet\in\{T,h\}$, we denote by $\Poly{\ell}_{\rm c}(\mathcal{E}_\bullet)$ the set of functions that are continuous on $\bigcup_{E\in\mathcal{E}_\bullet}\overline{E}$ and polynomial of degree at most $\ell$ on each edge $E\in\mathcal{E}_\bullet$.
The broken polynomial space over the mesh is, on the other hand, denoted by $\Poly{\ell}(\Th)$ (no subscript ``$c$'').

The rotated gradient of a smooth scalar-valued function $q$ is $\CURL q \coloneq (\partial_2 q,-\partial_1 q)^\top$. It
satisfies, for any $T\in\Th$, any $E\in\ET$, and any $r\in\Cspace{1}(\overline{T})$,
\begin{equation}\label{eq:curl.normal}
  (\CURL r)_{|E}\cdot\normal_E = -\partial_{\tangent_E} r_{|E}.
\end{equation}
Finally, for each element $T\in\Th$, we select a point $\bvec{x}_T\in T$ such that a ball of diameter $\simeq h_T$ is contained in $T$. This choice allows us to introduce, for any $\ell\ge 0$, the subspaces of $\Poly{\ell}(T)^2$
\[
\Roly{\ell}(T) \coloneq \CURL\Poly{\ell+1}(T),
\qquad
\cRoly{\ell}(T) \coloneq (\bvec{x}-\bvec{x}_T)\Poly{\ell-1}(T).
\]
These spaces provide the (in general, non-orthogonal) decomposition
\begin{equation}\label{eq:decomp.poly2}
  \Poly{\ell}(T)^2 = \Roly{\ell}(T)\oplus \cRoly{\ell}(T).
\end{equation}

\subsection{Discrete BGG diagram}\label{sec:discrete.bgg.diagram}

Let an integer $k \ge 0$ be fixed.
The following diagram is the discrete counterpart of \eqref{eq:double.complex.cont} designed in  \cite{Di-Pietro.Droniou.ea:26}:
\begin{equation}\label{eq:double.complex}
  \begin{tikzcd}[column sep=2.5em]
    \text{DS($k$):}
    & 0 \arrow{r}{}
    & \XSgrad{\Th}\arrow{r}{\SGRAD{h}}
    & \XSrot{\Th}\arrow[dashed]{r}{\SROT{h}}
    & \Poly{k}(\Th)\arrow[dashed]{r}{}
    & 0 \\
    \text{DDR($k+1$):}
    & 0 \arrow[dashed]{r}{}
    & \tXdRgrad{\Th}\arrow{r}{\tGRAD{h}}\arrow[leftrightarrow]{ur}{\Id}
    & \tXdRrot{\Th}\arrow{r}{\tROT{h}}\arrow[dashed]{ur}{\sskwh}
    & \Poly{k+1}(\Th)^2\arrow{r}{}
    & 0.
  \end{tikzcd}
\end{equation}
The exponents involving $k$ refer to the polynomial consistency of the operators according to the convention established in \cite[Section~4]{Di-Pietro.Droniou.ea:26}.
The bottom row of \eqref{eq:double.complex} corresponds to the tensorisation of a version of the two-dimensional serendipity discrete de Rham (DDR) complex of \cite{Di-Pietro.Droniou:23*1} of degree $k + 1$ with, for all $T\in\Th$, the choice $\eta_T=3$ in \cite[Assumption 12]{Di-Pietro.Droniou:23*1}), while the top row is the Stokes complex designed in \cite[Section~4.2]{Di-Pietro.Droniou.ea:26}.

The spaces appearing in \eqref{eq:double.complex} are as follows:
\begin{align}\label{eq:def.Xgrad}
  \tXdRgrad{\Th} &\coloneq
  \begin{aligned}[t]
    \Big\{
    &\underline{\bvec{v}}_h=((\bvec{v}_T)_{T\in\Th},(\bvec{v}_E)_{E\in\Eh},(\bvec{v}_V)_{V\in\Vh})\,:\,
    \\
    &
    \bvec{v}_T\in\Poly{k-1}(T)^2 \quad \forall T\in\Th,\,
    \bvec{v}_E\in\Poly{k}(E)^2 \quad \forall E\in\Eh,\,
    \bvec{v}_V\in\Real^2 \quad \forall V\in\Vh
    \Big\},
  \end{aligned}
  \\ \label{eq:def.Xrot}
  \tXdRrot{\Th} &\coloneq
  \begin{aligned}[t]
    \Big\{
    &\underline{\bvec{\tau}}_h=((\bvec{\tau}_T)_{T\in\Th},(\bvec{\tau}_E)_{E\in\Eh})\,:
    \\
    &\bvec{\tau}_T\in\Poly{k}(T)^{2\times 2}\quad\forall T\in\Th,\,
    \bvec{\tau}_E\in\Poly{k+1}(E)^2\quad\forall E\in\Eh\Big\},
  \end{aligned}
  \\ \label{eq:def.XSgrad}
  \XSgrad{\Th} &\coloneq
  \begin{aligned}[t]
    \Big\{
    &\underline{q}_h=((q_T)_{T\in\Th},(q_E)_{E\in\Eh},(\Gqen)_{E\in\Eh},(q_V)_{V\in\Vh},(\Gqv)_{V\in\Vh})\,:
    \\
    &q_T\in\Poly{k-2}(T)\quad\forall T\in\Th,\,
    q_E\in\Poly{k-1}(E)\text{ and }\Gqen\in\Poly{k}(E)\quad\forall E\in\Eh,\,
    \\
    &q_V\in\Real\text{ and }\Gqv\in\Real^2\quad\forall V\in\Vh\Big\}.
  \end{aligned}
\end{align}

In what follows, the restriction of a discrete space to a mesh element or edge $Y \in \Th \cup \Eh$ (obtained collecting the components attached to $Y$ and its boundary) is denoted by replacing $\Th$ with $Y$; so, for example, we write 
\[
\XSgrad{E}\coloneq
\begin{aligned}[t]
  \Big\{
  &\underline{q}_E=(q_E,\Gqen,(q_V)_{V\in\mathcal V_E},(\Gqv)_{V\in\mathcal V_E})\,:
  \\
  &q_E\in\Poly{k-1}(E)\text{ and }\Gqen\in\Poly{k}(E),\,
  q_V\in\Real\text{ and }\Gqv\in\Real^2\quad\forall V\in\mathcal V_E\Big\}.
\end{aligned}
\]
As already done in the expression above, the restriction of a vector to $Y\in\Th\cup\Eh$ is denoted by replacing the index $h$ with $Y$.
  \begin{remark}[Comparison with the $C^1$-conforming space of \cite{Brezzi.Marini:13}]\label{rem:vem.comparison}
    In \cite{Brezzi.Marini:13}, the authors propose a $C^1$-conforming VEM space which corresponds to the following fully discrete space:
    For any polynomial degree $k \ge 2$,
    \begin{equation}\label{eq:C1}
      \underline{C}_1^k(\Th)
      \coloneq
      \begin{aligned}[t]
        \Big\{
        &\underline{q}_h=((q_T)_{T\in\Th},(q_E)_{E\in\Eh},(\Gqen)_{E\in\Eh},(q_V)_{V\in\Vh},(\Gqv)_{V\in\Vh})\,:
        \\
        &q_T\in\Poly{k-2}(T)\quad\forall T\in\Th\,,\quad
        q_E\in\Poly{k-2}(E)\text{ and }\Gqen\in\Poly{k-1}(E)\quad\forall E\in\Eh\,,
        \\
        &q_V\in\Real\text{ and }\Gqv\in\Real^2\quad\forall V\in\Vh\Big\}.
      \end{aligned}
    \end{equation}
    As for $\XSgrad{\Th}$, we have adopted the convention that the degree of the space is dictated by the degree of the Hessian reconstruction (which corresponds to a shift of $-2$ with respect to the meaning of $k$ in the above reference).
    The only difference between \eqref{eq:def.XSgrad} and \eqref{eq:C1} lies in the polynomial degree of the degrees of freedom corresponding to the moments of the function and of its normal derivative on edges.
    From this space, the authors of \cite{Brezzi.Marini:13} reconstruct a potential of degree $k+2$, leading to a discrete Hessian of degree $k$.
    As we will see, the richer boundary representation in \eqref{eq:def.XSgrad} makes it possible to reconstruct a Hessian one degree higher; see \eqref{eq:pol.const.hess} below. We note also that, contrary to the one in \cite{Brezzi.Marini:13}, our construction starts at $k=0$, and thus also provides lower-order (and less expensive) schemes.
\end{remark}

The interpolators on discrete spaces provide the interpretation of the vector of polynomials representing a smooth enough function. For the spaces defined by \eqref{eq:def.Xgrad}, \eqref{eq:def.Xrot} and \eqref{eq:def.XSgrad}, the interpolators are $\IdRgrad:\Cspace{0}(\overline{\Omega})^2\to \tXdRgrad{\Th}$, $\IdRrot:\Cspace{0}(\overline{\Omega})^{2\times 2}\to \tXdRrot{\Th}$ and $\ISgrad:\Cspace{1}(\overline{\Omega})\to \XSgrad{\Th}$ such that
\begin{alignat}{2}
  \label{eq:def.IdRgrad}
  \IdRgrad \bvec{v}\coloneq {}&((\vlproj{k-1}{T}\bvec{v})_{T\in\Th},(\vlproj{k}{E}\bvec{v})_{E\in\Eh},(\bvec{v}(\bvec{x}_V))_{V\in\Vh})&&\qquad\forall \bvec{v}\in \Cspace{0}(\overline{\Omega})^2,\\
  \label{eq:def.IdRrot}
  \IdRrot \bvec{\tau}\coloneq {}&((\vlproj{k}{T}\bvec{\tau})_{T\in\Th},(\vlproj{k+1}{E}(\bvec{\tau}\tangent_E))_{E\in\Eh})&&\qquad\forall \bvec{\tau}\in \Cspace{0}(\overline{\Omega})^{2\times 2},\\
  \label{eq:def.ISgrad}
  \ISgrad q\coloneq\Big( {}&(\lproj{k-2}{T}q)_{T\in\Th},(\lproj{k-1}{E}q)_{E\in\Eh},(\lproj{k}{E}(\GRAD q\cdot\normal_E))_{E\in\Eh},&&
  \\
    {}&(q(\bvec{x}_V))_{V\in\Vh},(\GRAD q(\bvec{x}_V))_{V\in\Vh}
    \Big) && \qquad\forall q\in \Cspace{1}(\overline{\Omega}).
\end{alignat}

We next recall the definitions of the operators of the DS($k$) complex and refer to \cite{Di-Pietro.Droniou.ea:26} for those of DDR($k+1)^2$.
The components of the discrete gradient and rotor for the DS complex are obtained mimicking appropriate integration by parts formulas as described below.

For all $E \in\Eh$ and all $\dof{q}{E}\in\XSgrad{E}$, the discrete tangential gradient $\Gqet \dof{q}{E} \in \Poly{k}(E)$ is such that 
\begin{equation}
  \label{eq:def.Gqet}
  \int_E \Gqet\underline{q}_E\,r=-\int_E q_E \partial_{\tangent_E} r + \sum_{V\in\VE}\omega_{EV}\,q_V\,r(\bvec{x}_V)\qquad\forall r\in\Poly{k}(E).
\end{equation}
For all $T\in\Th$ and all $\dof{q}{T}\in\XSgrad{T}$, the discrete element gradient $\nablaT\underline{q}_T\in\Poly{k-1}(T)^2$ satisfies
\begin{equation}
  \label{def:nablaT}
  \int_T\nablaT\underline{q}_T\cdot \bvec w =
  -\int_T q_T\DIV \bvec w
  + \sum_{E\in\ET}\omega_{TE}\int_E q_E\, (\bvec w\cdot\normal_{E})\qquad
  \forall \bvec w\in\Poly{k-1}(T)^2.
\end{equation}
The discrete gradient of $\underline{q}_h\in\XSgrad{\Th}$ is then given by
\begin{equation}\label{eq:def.nablah}
  \SGRAD{h}\dofh{q}=((\nablaT\underline{q}_T)_{T\in\Th},(\Gqet\dof{q}{E}\tangent_E + \Gqen \normal_E))_{E\in\Eh},(\Gqv)_{V\in\Vh})
  \in \XSrot{\Th}.
\end{equation}

For all $\underline{\bvec v}_h\in\tXdRgrad{\Th}$, $\SROT{h}\dof{\bvec{v}}{h}\in\Poly{k}(\Th)$ is such that
\begin{equation*} 
  (\SROT{h}\underline{\bvec v}_h)_{|T}
  = \SROT{T}\dof{\bvec v}{T}
  \qquad \forall T \in \Th,
\end{equation*}
where $\SROT{T}\dof{\bvec v}{T} \in\Poly{k}(T)$ satisfies
\begin{equation}\label{eq:def.SROT}
  \int_T\SROT{T}\dof{v}{T}\,r=\int_T \bvec{v}_T\cdot\CURL r - \sum_{E\in\ET}\omega_{TE}\int_{E}(\bvec{v}_E\cdot \tangent_E)\,r\qquad
  \forall r\in\Poly{k}(T).
\end{equation}

The following commutation properties, established in \cite[Lemma 11]{Di-Pietro.Droniou.ea:26}, will be useful.

\begin{lemma}[Commutation properties]\label{lem:com.prop}
  It holds, for all $T\in\Th$,
  \begin{alignat}{2}
    \label{eq:commutation.grad}
    \SGRAD{T} \ISgradT q {}&=\IdRgradT (\GRAD q)  &&\qquad\forall q \in \Cspace{1}(\overline{T}),\\
    \label{eq:commutation.rot}
    \SROT{T} \IdRgradT \bvec v {}&= \lproj{k}{T} (\ROT \bvec v)  &&\qquad\forall \bvec v \in H_2(T)^2.
  \end{alignat}
\end{lemma}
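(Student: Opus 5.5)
Both identities are checked component by component on a fixed element $T$. In each case the discrete operator is defined by an integration-by-parts identity (\eqref{eq:def.Gqet}, \eqref{def:nablaT}, \eqref{eq:def.SROT}). We plug the interpolated degrees of freedom into that identity, replace each projector by the function it projects (this is allowed because every test function has degree at most that of the corresponding projector), and recognise the continuous integration by parts formula. The $L^2$-projection onto the target space then gives the result.

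\emph{Gradient commutation \eqref{eq:commutation.grad}.} Write $\underline{q}_T=\ISgradT q$.

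The vertex components agree by definition: both sides give $\GRAD q(\bvec{x}_V)$.

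For an edge $E\in\ET$, the normal component of $\IdRgradT(\GRAD q)$ is $\lproj{k}{E}(\GRAD q\cdot\normal_E)=\Gqen$, which matches. For the tangential component, take $r\in\Poly{k}(E)$ in \eqref{eq:def.Gqet}. Since $\partial_{\tangent_E}r\in\Poly{k-1}(E)$, we may replace $q_E=\lproj{k-1}{E}q$ by $q$. Using $q_V=q(\bvec{x}_V)$ and integrating by parts along $E$ gives
\[
\int_E \Gqet\underline{q}_E\,r=\int_E\partial_{\tangent_E}q\,r .
\]
Hence $\Gqet\underline{q}_E=\lproj{k}{E}(\GRAD q\cdot\tangent_E)$. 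Since $\lproj{k}{E}$ acts componentwise and $(\tangent_E,\normal_E)$ is a constant basis, we get $\Gqet\underline{q}_E\tangent_E+\Gqen\normal_E=\vlproj{k}{E}\GRAD q$.

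For the element component, take $\bvec w\in\Poly{k-1}(T)^2$ in \eqref{def:nablaT}. Since $\DIV\bvec w\in\Poly{k-2}(T)$ and $\bvec w\cdot\normal_E\in\Poly{k-1}(E)$, we may replace $q_T$ and $q_E$ by $q$. The Green formula, with $\omega_{TE}\normal_E$ the outer normal, then gives $\int_T\nablaT\underline{q}_T\cdot\bvec w=\int_T\GRAD q\cdot\bvec w$, that is, $\nablaT\underline{q}_T=\vlproj{k-1}{T}\GRAD q$.

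Together these three points yield \eqref{eq:commutation.grad}. For $q\in\Cspace{1}(\overline{T})$ the Green formula is applied by density or by taking traces, which is standard.

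\emph{Rotor commutation \eqref{eq:commutation.rot}.} Here $\IdRgradT\bvec v$ is well defined, since $H_2(T)^2\subset\Cspace{0}(\overline{T})^2$. Its components are $\bvec{v}_T=\vlproj{k-1}{T}\bvec v$ and $\bvec{v}_E=\vlproj{k}{E}\bvec v$.

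Take $r\in\Poly{k}(T)$ in \eqref{eq:def.SROT}. Since $\CURL r\in\Poly{k-1}(T)^2$ and $r_{|E}\in\Poly{k}(E)$, both projectors can be dropped, giving
\[
\int_T\bvec v\cdot\CURL r-\sum_{E\in\ET}\omega_{TE}\int_E(\bvec v\cdot\tangent_E)\,r .
\]
This is exactly $\int_T\ROT\bvec v\,r$ by the Stokes formula $\int_T\ROT\bvec v\,r=\int_T\bvec v\cdot\CURL r+\int_{\partial T}(\bvec v\cdot\tangent_{\partial T})\,r$. The tangent here is the counterclockwise one, so the sign convention must be checked against $\omega_{TE}$ and the orientation $(\tangent_E,\normal_E)$; relation \eqref{eq:curl.normal} is useful for this. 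Hence $\SROT{T}\IdRgradT\bvec v=\lproj{k}{T}\ROT\bvec v$.

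\emph{Main obstacle.} The only genuinely delicate point is sign and orientation bookkeeping. In the edge integration by parts one must check that $\omega_{EV}$ matches the endpoint signs. In the rotor case one must check that $\omega_{TE}\normal_E$ being outward makes $-\omega_{TE}\tangent_E$ the counterclockwise tangent, since $\tangent_E$ equals $\normal_E$ rotated by $-90^\circ$. This gives the minus sign in \eqref{eq:def.SROT}, consistent with the Stokes formula. The degree counting that allows the projectors to be removed is immediate from the definitions of the spaces.
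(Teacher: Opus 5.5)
Your proof is correct: the paper gives no proof of this lemma and refers to \cite[Lemma 11]{Di-Pietro.Droniou.ea:26}, and your componentwise check is the standard argument for such commutation properties. You insert the interpolates into the defining formulas \eqref{eq:def.Gqet}, \eqref{def:nablaT} and \eqref{eq:def.SROT}, drop the projectors by degree counting, recognise the continuous integration-by-parts formulas, and your orientation checks ($\omega_{EV}=+1$ at the endpoint towards which $\tangent_E$ points, and $-\omega_{TE}\tangent_E$ as the counterclockwise tangent) are right.
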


Following the solid arrows in diagram \eqref{eq:double.complex} from top left to bottom right, we obtain the following discrete counterpart of \eqref{BGG-Hessian-HD}:
\begin{equation}\label{derived-BGG}
  \begin{tikzcd}
    \text{DH($k+1$):}
    &[-.5em] 0 \arrow{r}{}
    &[-.5em] \XSgrad{\Th} \arrow{r}{\Hess{h}}
    &[1.5em] \XdRrotS{\Th} \arrow{r}{\tROT{h}}
    &[1.5em] \Poly{k+1}(\Th)^2 \arrow{r}{}
    &[-.5em] 0
  \end{tikzcd}
\end{equation}
where $\sskwh:\tXdRrot{\Th}\to\Poly{k}(\Th)$ is such that $(\sskwh \underline{\bvec{\tau}}_h)_{|T} \coloneq \sskw \bvec{\tau}_T$ for all $T \in \Th$, the space in the middle of \eqref{derived-BGG} is given by
\[
\begin{aligned}
  \XdRrotS{\Th}
  &\coloneq \tXdRrot{\Th}\cap \Ker(\sskwh)\\
  &=
  \Big\{
  \underline{\bvec{\tau}}_h=((\bvec{\tau}_T)_{T\in\Th},(\bvec{\tau}_E)_{E\in\Eh})\,:\,
  \bvec{\tau}_T\in\Poly{k}(T)^{2\times 2}\text{ with $\bvec{\tau}(\bvec{x})\in\mathbb{S}$ $\forall \bvec{x}\in T$\quad$\forall T\in\Th$},\\
  &\qquad\bvec{\tau}_E\in\Poly{k+1}(E)^2\quad\forall E\in\Eh
  \Big\},
\end{aligned}
\]
and
\begin{equation*} 
  \Hess{h} \coloneq \tGRAD{h}\circ \SGRAD{h}
\end{equation*}
is the discrete Hessian operator.
The degree of this operator is justified by its polynomial consistency stated in the next lemma (see \cite[Lemma 20]{Di-Pietro.Droniou.ea:26} for the proof).

\begin{lemma}[Commutation property and polynomial consistency of the discrete Hessian operator]
  \label{lem:pol.const.hess}
  For all $T\in\Th$, it holds
  \begin{equation}\label{eq:com.prop.hess}
    \Hess{T}\ISgradT q =\IdRrotT \hess q \qquad \forall q \in \Cspace{2}(T),
  \end{equation}
  and the operator $\Hess{T}$ is consistent of degree $k+1$, that is,
  \begin{equation}\label{eq:pol.const.hess}
    \Prot{k+1}{T} \Hess{T}\ISgradT q = \hess q\qquad\forall q\in\Poly{k+3}(T),
  \end{equation}
  where $\Prot{k+1}{T}$ is the tensorised version of the potential on $\XdRrot{\Th}$ built from the two-dimensional tangential trace of \cite[Eq.~(3.22), (3.23)]{Di-Pietro.Droniou:23} and transferred to the serendipity complex via \cite[Section 2]{Di-Pietro.Droniou:23*1}.
\end{lemma}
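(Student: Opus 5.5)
The commutation property \eqref{eq:com.prop.hess} follows by composing the two commutation properties already available. Since $\Hess{T}=\tGRAD{T}\circ\SGRAD{T}$, \eqref{eq:commutation.grad} gives, for $q\in\Cspace{2}(T)$ (in particular $q\in\Cspace{1}(\overline{T})$),
\[
\Hess{T}\ISgradT q=\tGRAD{T}\bigl(\SGRAD{T}\ISgradT q\bigr)=\tGRAD{T}\IdRgradT(\GRAD q).
\]
Next we would invoke the commutation property of the (tensorised, serendipity) DDR($k+1$) gradient, $\tGRAD{T}\IdRgradT\bvec v=\IdRrotT(\GRAD\bvec v)$, applied componentwise to $\bvec v=\GRAD q\in\Cspace{1}(\overline{T})^2$. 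This is standard for the scalar DDR complex of \cite{Di-Pietro.Droniou:23*1} and tensorises row by row. Since $\GRAD\GRAD q=\hess q$, this yields \eqref{eq:com.prop.hess}.

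For \eqref{eq:pol.const.hess}, let $q\in\Poly{k+3}(T)$. By \eqref{eq:com.prop.hess}, $\Prot{k+1}{T}\Hess{T}\ISgradT q=\Prot{k+1}{T}\IdRrotT(\hess q)$ with $\hess q\in\Poly{k+1}(T)^{2\times2}$. It therefore suffices to use the polynomial consistency of the DDR($k+1$) $\VROT$-potential: $\Prot{k+1}{T}\IdRrotT\bvec\tau=\bvec\tau$ for all $\bvec\tau\in\Poly{k+1}(T)^{2\times2}$. This is the tensorised form of the consistency of the two-dimensional tangential-trace potential \cite[Eqs.~(3.22)--(3.23)]{Di-Pietro.Droniou:23}, which \cite[Section 2]{Di-Pietro.Droniou:23*1} shows is preserved when passing to the serendipity version through the extension/reduction maps.

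The main obstacle is this last point. The serendipity element DOFs are of degree $k$ only ($\bvec\tau_T\in\Poly{k}(T)^{2\times2}$), while $\hess q$ has degree $k+1$. Consistency therefore hinges on the serendipity extension reconstructing the full element component from the degree-$k$ data and the edge traces of degree $k+1$. One must check that the chosen parameter $\eta_T=3$ meets \cite[Assumption 12]{Di-Pietro.Droniou:23*1}, so that the extension operator is polynomially consistent of degree $k+1$ on $\VROT$-interpolates. Once that is verified, both claims follow directly.
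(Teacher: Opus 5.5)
Your argument is correct and is essentially the proof the paper relies on; the paper itself defers to \cite[Lemma 20]{Di-Pietro.Droniou.ea:26}, and that argument chains \eqref{eq:commutation.grad} with the commutation property $\tGRAD{T}\IdRgradT\bvec v=\IdRrotT(\GRAD\bvec v)$ of the serendipity DDR($k+1$) gradient to obtain \eqref{eq:com.prop.hess}, then applies the degree-$(k+1)$ polynomial consistency of $\Prot{k+1}{T}$ to $\hess q\in\Poly{k+1}(T)^{2\times 2}$. The step you flag as the main obstacle needs no new verification here: the polynomial consistency of the serendipity extension, and hence of $\Prot{k+1}{T}$, is an established result of the serendipity framework under the choice $\eta_T=3$ that the paper assumes.
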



\subsection{Serendipity operators}

We recall here the serendipity operators relevant to the following discussion.
These operators are designed to reduce the dimension of a discrete space while preserving its degree of polynomial consistency.

  Let $T\in\Th$. With the choice of serendipity discussed in Section \ref{sec:discrete.bgg.diagram}, the local and non-tensorized DDR $\boldsymbol{H}_{\VROT}$-space of degree $k$ is
  \[
  \XdRrotk{T} \coloneq  \Big\{
  \underline{\bvec{v}}_h=((\bvec{v}_T)_{T\in\Th},(v_E)_{E\in\Eh})\,:
  \\
  \bvec{v}_T\in\Poly{k-1}(T)^{2},\,
  v_E\in\Poly{k}(E)\quad\forall E\in\Eh\Big\},
  \]
  with interpolator $\underline{I}_{\ROT,T}^k\bvec{v} \coloneqq (\lproj{k-1}{T}\bvec{v},(\lproj{k}{E}(\bvec{v}\cdot\tangent_E))_{E\in\ET})$ for all $\bvec{v}\in \mathcal C_0(T)^2$. Let us define on $\XdRrotk{T}$ a discrete (low-accuracy) rotor $\ROT^{k-1}_T: \XdRrotk{T}\to\Poly{k-1}(T)$ by mimicking an integration-by-parts formula (in a similar way as \eqref{eq:def.SROT}): for all $\dof{\bvec{v}}{T}\in  \XdRrotk{T}$,
  \[
  \int_T (\ROT^{k-1}_T\dof{\bvec{v}}{T})\,r = \int_T \bvec{v}_T\cdot\CURL r - \sum_{E\in\ET}\omega_{TE}\int_E v_E r\qquad\forall r\in\Poly{k-1}(T).
  \]
  It is a simple matter to check that
  \begin{equation}\label{eq:polynomial.consistency.ROT_T}
    \ROT^{k-1}_T \underline{I}_{\ROT,T}^k\bvec{v}=\ROT \bvec{v}\quad\forall \bvec{v}\in\Poly{k}(T)^2.
  \end{equation}
  The curl serendipity operator $\SC{T}{k}:\XdRrotk{T} \to \Poly{k}(T)^2$ is then defined such that, for all $\dofT{\bvec{v}} \in \XdRrotk{T}$, $\SC{T}{k}\dofT{\bvec{v}}$ is the solution of the following constrained minimisation problem:
  \begin{equation} \label{eq:min.Jv}
    \text{Minimize $\mathcal{J}(\dof{\bvec{v}}{T}; \bvec{w})$ over the set of $\bvec{w}\in\Poly{k}(T)^2$ such that $\Rcproj{k-1}{T}\bvec{w}=\Rcproj{k-1}{T}\bvec{v}_T$},
  \end{equation}
  where $\Rcproj{k-1}{T}$ is the $L^2(T)$-orthogonal projection on $\cRoly{k-1}(T)$ and, for all $\bvec{w}\in\Poly{k}(T)^2$,
  \begin{align*}
   \mathcal{J}(\dof{\bvec{v}}{T}; \bvec{w}) \coloneq \sup_{\bvec{q}\in B(\Poly{k}(T)^2)}{}& \left\{
    h_T\sum_{E\in\ET}\int_E(\bvec{w}\cdot\tangent_E-v_E)\,(\bvec{q}\cdot\tangent_E)
    + h_T^2\left[\int_T(\ROT\bvec{w}-\ROT_T^{k-1}\dof{\bvec{v}}{T})\ROT\bvec{q}
      \right]
    \right\}
  \end{align*}
  with $B(\Poly{k}(T)^2)\coloneq\{\bvec{q}\in\Poly{k}(T)^2\,:\,\norm{L^2(T)}{\bvec{q}}\le 1\}$ the unit ball in $\Poly{k}(T)^2$.
  The well-posedness of the minimisation problem \eqref{eq:min.Jv} is a consequence of \cite[Proposition 16]{Di-Pietro.Droniou:23*1}.
  In this problem, the role of the first term in the supremum defining $\mathcal{J}(\dof{\bvec{v}}{T}; \cdot)$ is to impose, in a least-square sense, the relation $\bvec{w}\cdot\tangent_E=v_E$ on all $E\in\ET$, while the second term imposes, still in the least-square sense, the condition $\ROT\bvec{w}=\ROT_T^{k-1}\dof{\bvec{v}}{T}$. Owing to \eqref{eq:polynomial.consistency.ROT_T} we have, whenever $\bvec{v}\in\Poly{k}(T)^2$,
  \begin{align*}
    \mathcal{J}(\underline{I}_{\ROT,T}^k\bvec{v}; \bvec{w})= \sup_{\bvec{q}\in B(\Poly{k}(T)^2)}{}& \Bigg\{h_T\sum_{E\in\ET}\int_E(\bvec{w}- \bvec{v})\cdot\tangent_E\,(\bvec{q}\cdot\tangent_E)
    + h_T^2\left[\int_T(\ROT\bvec{w}-\ROT\bvec{v})\,\ROT\bvec{q}\right]\Bigg\}.
  \end{align*}
  It is then obvious that $\SC{T}{k}$ exactly reconstructs polynomials of degree $\le k$ from the information provided by their interpolate on $\XdRrotk{T}$ (see also \cite[Eq.~(6.3)]{Di-Pietro.Droniou:23*1}):
  \begin{equation}\label{eq:consistency.SC}
    \SC{T}{k}\underline{I}_{\ROT,T}^k\bvec{v}=\bvec{v}\qquad\forall \bvec{v}\in\Poly{k}(T)^2.
  \end{equation}
  
  To use this serendipity operator on $\XSrot{T}$, we define the injection $\injrot :\XSrot{T}\rightarrow \XdRrotk{T}$ such that
  \begin{equation*}
    \injrot(\dofT{\bvec v})  = (\bvec{v}_T,(\bvec{v}_E\cdot \tangent_E)_{E\in\ET})
    \qquad \forall \dofT{\bvec v}\in\XSrot{T}.
  \end{equation*}
  Letting $\SR{T}{k} \coloneq \SC{T}{k}\circ\injrot:\XSrot{T}\to\Poly{k}(T)^2$ and noticing that $\injrot\circ \IdRgradT =\underline{I}_{\ROT,T}^k$, the polynomial consistency \eqref{eq:consistency.SC} shows that
  \begin{equation}
    \label{eq:pol.const.seren.Srot}
    \SR{T}{k} \IdRgradT \bvec v = \bvec v \qquad \forall \bvec v \in \Poly{k}(T)^2.
  \end{equation}

\subsection{Potential reconstruction on $\XSrot{T}$}

For $T\in\Th$, two discrete calculus operators are defined on $\XSrot{T}$ ($\tGRAD{T}$ and $\SROT{T}$, see \eqref{eq:double.complex}), leading to two distinct potential reconstructions on this space.
The potential reconstruction $\Pgrad{k+2}{T}:\XSrot{T}\rightarrow\Poly{k+2}(T)^2$ associated with $\tGRAD{T}$ is the tensorised standard serendipity potential reconstruction of degree $k+2$, defined in \cite[Section 2 and 4.2.1]{Di-Pietro.Droniou:23*1}.
The potential reconstruction $\SProt:\XSrot{T}\rightarrow \Poly{k}(T)^2$ associated with $\SROT{T}$ is, on the other hand, such that, for all $\dof{\bvec v}{T}\in\XSrot{T}$,
\begin{multline}\label{eq:def.potSrot}
  \int_T\SProt\dof{\bvec v}{T}\cdot(\CURL r+\bvec w)
  = \int_T\SROT{T}\dof{\bvec{v}}{T}\, r
  + \sum_{E\in\ET}\omega_{TE}\int_{E}(\bvec{v}_E\cdot \tangent_E) \,r
  + \int_T \SR{T}{k}\dof{\bvec v}{T} \cdot \bvec w \\
  \forall (r,\bvec w) \in \mathcal{P}^{k+1}(T)\times\cRoly{k}(T).
\end{multline}
Notice that $\SProt\dof{\bvec v}{T}$ is well-defined owing to the decomposition \eqref{eq:decomp.poly2} and since the right-hand side of \eqref{eq:def.potSrot} vanishes when applied to $r$ such that $\CURL r=0$, by definition \eqref{eq:def.SROT} of $\SROT{T}$ and using $\Ker \CURL = \Poly{0}(T)$.

The potentials on $\XSrot{T}$ are both polynomially consistent at their respective degrees, i.e,
\begin{alignat}{2}
  \label{eq:const.pol.P.grad.k+2}
  \Pgrad{k+2 }{T} \IdRgradT \bvec v&{}= \bvec v &&\qquad \forall \bvec v \in\Poly{k+2}(T)^2,\\
  \label{eq:const.pol.P.rot.k}
  \SProt\IdRgradT \bvec w &{}= \bvec w &&\qquad \forall \bvec w\in\Poly{k}(T)^2.
\end{alignat}
The result on $\Pgrad{k+2}{T}$ comes from the serendipity DDR framework, while \eqref{eq:const.pol.P.rot.k} is a consequence of \eqref{eq:commutation.rot} and \eqref{eq:pol.const.seren.Srot}. Concerning the polynomial degree for $\SProt$, remember that, regarded as the domain of $\SROT{T}$, the space $\XSrot{T}$ is only polynomially consistent of degree $k$ (see Lemma \ref{lem:com.prop}).

\subsection{Potential reconstruction on $\XSgrad{T}$}\label{sec:potential.XSgradT}

Let $T\in\Th$. Let us first design a trace reconstruction on $\partial T$. For all $\dofT{q}\in\XSgrad{T}$, let $\gammaS\dof{q}{T}\in \Poly{k+1}_{\rm c}(\ET)$ be such that:
\begin{equation}\label{eq:def.gammaS}
  \lproj{k-1}{E}(\gammaS\dof{q}{T}) = q_E\quad\forall E\in\ET\,,\qquad \gammaS \dof{q}{T} (\bvec{x}_V)= q_V\quad\forall V\in\VT.
\end{equation}
Remark that the definition \eqref{eq:def.Gqet} of $\Gqet$ easily gives
\begin{equation}
  \label{eq:link.gammaS.Gqet}
  \partial_{\tangent_E}  (\gammaS \dof{q}{T})_{|E}=\Gqet \dof{q}{E}.
\end{equation}
We also notice that
\begin{equation}\label{eq:trace.exactness}
  \gammaS(\ISgradT q ) = q_{|\partial T}\qquad\forall q \in \Cspace{1}(\overline{T})\text{ such that }q_{|\partial T} \in \Poly{k+1}(\Eh).
\end{equation}
For $\dof{q}{T}\in\XSgrad{T}$, the potential reconstruction $\SPgrad\dof{q}{T}\in\Poly{k+1}(T)$ is such that
\begin{multline}
  \label{eq:def.SPGRAD}
  \int_T \SPgrad \dof{q}{T}\DIV \bvec w = -\int_T \SProt\SGRAD{T}\dof{q}{T}\cdot \bvec w+\sum_{E\in\ET}\omega_{TE}\int_E \gammaS \dofT{q}\, (\bvec w\cdot\normal_E)\\
  \forall \bvec w\in \cRoly{k+2}(T).
\end{multline}
This formula uniquely defines $\SPgrad \dof{q}{T}\in \Poly{k+1}(T)$ because $\DIV:\cRoly{k+2}(T)\to \Poly{k+1}(T)$ is an isomorphism.

\begin{remark}[Higher-order potential]
  The information available $\XSgrad{T}$ would actually allow us to reconstruct a potential that has primal consistency properties up to degree $k+3$.
  However, the degree of the adjoint consistency properties of this potential seems to only be $k+1$. We detail this in Appendix \ref{appendix:P.nabla.k+3.T}.
\end{remark}

\begin{remark}[Validity of \eqref{eq:def.SPGRAD}]\label{rem:extention.SPGRAD}
  Take $\bvec w \in\Roly{k}(T)$ in \eqref{eq:def.SPGRAD}. The left hand side vanishes because $\DIV \bvec w = 0$. Let us show that the right-hand side vanishes as well. Letting $r\in\Poly{k+1}(T)$ be such that $\CURL r = \bvec w$, we have
  \begin{equation}\label{eq:Prot.valid.w}
    \begin{aligned}
      \int_T \SProt\SGRAD{T}\dof{q}{T}\cdot \CURL r \overset{\eqref{eq:def.potSrot},\,\eqref{eq:def.nablah}}&{=} \int_T\cancel{\SROT{T}\SGRAD{T}\dofT{q}}\, r + \sum_{E\in\ET}\omega_{TE}\int_{E}\Gqet\dofT{q} \,r \\
      \overset{\eqref{eq:link.gammaS.Gqet},\,\text{IBP}}&{=} -\sum_{E\in\ET}\omega_{TE}\int_{E}\gammaS\dofT{q} \,\partial_{\tangent_E} r \\
      \overset{\eqref{eq:curl.normal}}&= \sum_{E\in\ET}\omega_{TE}\int_{E}\gammaS\dofT{q} \,(\bvec w \cdot\normal_E),
    \end{aligned}
  \end{equation}
  where the cancellation in the first equality comes from the complex property of DS($k$) \cite[Lemma 13]{Di-Pietro.Droniou.ea:26}.
  Hence, the formula \eqref{eq:def.SPGRAD} can be extended to all $\bvec w \in\Roly{k}(T)\oplus \cRoly{k+2}(T)$ (hence, in particular, to all $\bvec{w} \in \Poly{k}(T)^2$).
\end{remark}

The following polynomial consistency property for $\SPgrad$ is a direct consequence of its definition \eqref{eq:def.SPGRAD} together with the commutation property \eqref{eq:commutation.grad} and the polynomial consistency properties \eqref{eq:const.pol.P.rot.k} of $\SProt$ and \eqref{eq:trace.exactness} of $\gammaS$:
\begin{equation}\label{eq:poly.consistency.Pgrad}
  \SPgrad \ISgradT q = q \qquad \forall q\in \Poly{k+1}(T).
\end{equation}

\subsection{$L^2$-like norms and scalar products}

Throughout the rest of the paper, given an open bounded subset $Y$ of $\Real^2$, we denote by $\norm{Y}{{\cdot}}$ the standard norm of $L^2(Y)$, $L^2(Y)^2$, or $L^2(Y)^{2\times 2}$, all possible ambiguity being removed by the argument.
On $\XSgrad{\Th}$, we define the norm $\nSgrad{h}{{\cdot}}$ such that, for all $\dof{q}{h} \in \XSgrad{\Th}$,
\begin{equation}\label{eq:def.norm.gradT}
  \begin{gathered}
    \nSgrad{h}{\dof{q}{h}}^2 \coloneqq \sum_{T \in \Th} \nSgrad{T}{\dof{q}{T}}^2,
    \\
    \nSgrad{T}{\dof{q}{T}}^2
    \coloneq \norm{T}{q_T}^2
    + \sum_{E \in \ET}h_T \left(\norm{E}{q_E}^2
    + h_T^2\norm{E}{\Gqen}^2
    \right)
    + \sum_{V\in\VT}h_T^2 \left(
    |q_V|^2+h^2_T|\Gqv|^2
    \right).
  \end{gathered}
\end{equation}
The local norm $\nSgrad{T}{{\cdot}}$ is induced by a natural inner product that we denote by $\langle \cdot ,\cdot\rangle_{\stokes,T}$.

The norm on $\XSrot{\Th}$ is such that, for all $\dofT{\bvec v}\in\XSrot{T}$,
\begin{equation}\label{eq:def.norm.rotT}
  \begin{gathered}
    \nSrot{h}{\dofh{\bvec v}}^2 \coloneq  \sum_{T\in\Th} \nSrot{T}{\dofT{\bvec v}}^2,
    \\
    \nSrot{T}{\dofT{\bvec v}}^2\coloneq
    \norm{T}{\bvec v_T}^2+ \sum_{E\in\ET} h_T \norm{E}{\bvec v_E}^2
    +\sum_{V\in\VT} h_T^2| \bvec v_V|^2
  \end{gathered}
\end{equation}
and the inner product inducing $\nSrot{T}{{\cdot}}$ is denoted by $ \langle \cdot,\cdot \rangle_{1,T}$.

For $T\in\Th$, the discrete $L^2$-scalar product $(\cdot,\cdot)_{\stokes,T}$  on $\XSgrad{T}$ is such that, for all $\dof{q}{T},\dof{r}{T}\in \XSgrad{T}$,
\begin{equation}\label{eq:prod.l2.2.T}
  \begin{gathered}
    (\dof{q}{T},\dof{r}{T})_{\stokes,T}\coloneq  \int_T \SPgrad \dof{q}{T}\,\SPgrad\dof{r}{T} +s_{\stokes,T}(\dof{q}{T},\dof{r}{T}),
    \\
    s_{\stokes,T}(\dof{q}{T},\dof{r}{T}) \coloneq  \langle \dof{q}{T} - \ISgradT \SPgrad\dof{q}{T} , \dof{r}{T} - \ISgradT \SPgrad\dof{r}{T} \rangle_{\stokes,T}.
  \end{gathered}
\end{equation}
The discrete $L^2$-scalar product  $(\cdot,\cdot)_{\ROT,T}$  on $\XSrot{T}$ is such that, for all $\dof{\bvec v}{T},\dof{\bvec w}{T}\in \XSrot{T}$,
\[
\begin{gathered}
  (\dof{\bvec v}{T},\dof{\bvec w}{T})_{\ROT,T}\coloneq
  \int_T\SProt\dofT{\bvec v} \cdot \SProt\dofT{\bvec w} + s_{\ROT,T}(\dof{\bvec v}{T},\dof{\bvec w}{T}),
  \\
  s_{\ROT,T} \coloneq   \langle \dofT{\bvec v} - \IdRgrad\SProt\dofT{\bvec v}, \dofT{\bvec w} - \IdRgrad\SProt\dofT{\bvec w} \rangle_{1,T}.
\end{gathered}
\]

For $\bullet\in\{\stokes,\ROT\}$, the norm induced by $(\cdot,\cdot)_{\bullet,T}$ is denoted by $\norm{\bullet,T}{{\cdot}}$. The corresponding global inner product $(\cdot,\cdot)_{\bullet,h}$ and norm $\norm{\bullet,h}{{\cdot}}$ are defined by assembling local contributions.

\begin{lemma}[Norms equivalence]
  The following uniform norm equivalences hold:
  \begin{equation*}
    \norm{\stokes,T}{{\cdot}}\simeq \nSgrad{T}{{\cdot}}\,,\qquad
    \norm{\ROT,T}{{\cdot}}\simeq \nSrot{T}{{\cdot}}.
  \end{equation*}
\end{lemma}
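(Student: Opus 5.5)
The two equivalences are treated in the same way; we detail the plan for $\norm{\stokes,T}{{\cdot}}\simeq\nSgrad{T}{{\cdot}}$ and then indicate the changes for $\norm{\ROT,T}{{\cdot}}$. The approach is the standard one for DDR-type $L^2$-products, as in \cite{Di-Pietro.Droniou:23}. It rests on two ingredients:
\begin{itemize}[itemsep=0em,leftmargin=1.5em]
\item[(i)] boundedness of the potential, $\norm{T}{\SPgrad\dofT{q}}\lesssim\nSgrad{T}{\dofT{q}}$;
\item[(ii)] boundedness of the interpolator on polynomials, $\nSgrad{T}{\ISgradT p}\lesssim\norm{T}{p}$ for all $p\in\Poly{k+1}(T)$.
\end{itemize}
The second follows from the boundedness of $L^2$-projectors, the discrete trace inequality $h_T^{1/2}\norm{E}{p}\lesssim\norm{T}{p}$, the inverse inequality $\norm{T}{\GRAD p}\lesssim h_T^{-1}\norm{T}{p}$, and the estimate $|p(\bvec{x}_V)|\lesssim h_T^{-1}\norm{T}{p}$ for polynomials on mesh-regular elements. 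The scalings $h_T^2$ and $h_T^4$ in \eqref{eq:def.norm.gradT} are designed precisely to match these estimates.

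Given (i) and (ii), the upper bound follows from the triangle inequality:
\[
s_{\stokes,T}(\dofT{q},\dofT{q})^{1/2}\le\nSgrad{T}{\dofT{q}}+\nSgrad{T}{\ISgradT\SPgrad\dofT{q}}\lesssim\nSgrad{T}{\dofT{q}}+\norm{T}{\SPgrad\dofT{q}}.
\]
For the lower bound, we write $\dofT{q}=(\dofT{q}-\ISgradT\SPgrad\dofT{q})+\ISgradT\SPgrad\dofT{q}$. The first term is controlled by $s_{\stokes,T}^{1/2}$, and the second by $\norm{T}{\SPgrad\dofT{q}}$ through (ii).

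The main work is (i). To bound $\SPgrad\dofT{q}$, we use \eqref{eq:def.SPGRAD} extended to $\bvec w\in\Poly{k}(T)^2\oplus\cRoly{k+2}(T)$ (Remark~\ref{rem:extention.SPGRAD}). We take $\bvec w\in\cRoly{k+2}(T)$ with $\DIV\bvec w=\SPgrad\dofT{q}$, using the isomorphism $\DIV:\cRoly{k+2}(T)\to\Poly{k+1}(T)$ and its uniform scaling $\norm{T}{\bvec w}\lesssim h_T\norm{T}{\DIV\bvec w}$ (obtained by scaling to a reference element). This gives
\[
\norm{T}{\SPgrad\dofT{q}}^2\lesssim\big(h_T\norm{T}{\SProt\SGRAD{T}\dofT{q}}+h_T^{1/2}\norm{\partial T}{\gammaS\dofT{q}}\big)\norm{T}{\SPgrad\dofT{q}}.
\]
It then remains to establish three bounds.
\begin{itemize}[itemsep=0em,leftmargin=1.5em]
\item[(a)] $h_T^{1/2}\norm{E}{\gammaS\dofT{q}}\lesssim h_T^{1/2}\norm{E}{q_E}+h_T\sum_{V}|q_V|$. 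This follows from \eqref{eq:def.gammaS} by a reference-edge argument, since $(q_E,q_V)$ determine the trace uniquely.
\item[(b)] $\nSrot{T}{\SGRAD{T}\dofT{q}}\lesssim h_T^{-1}\nSgrad{T}{\dofT{q}}$. This follows from the definitions \eqref{eq:def.Gqet}, \eqref{def:nablaT}, \eqref{eq:def.nablah}, using inverse and trace inequalities in the test functions.
\item[(c)] $\norm{T}{\SProt\dofT{\bvec v}}\lesssim\nSrot{T}{\dofT{\bvec v}}$. This is exactly the analogue of (i) for $\XSrot{T}$.
\end{itemize}

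Proving (c) is the step I expect to be the main obstacle. It requires bounding $\SROT{T}$ by $h_T^{-1}\nSrot{T}{{\cdot}}$ (straightforward from \eqref{eq:def.SROT}) and bounding the serendipity operator, $\norm{T}{\SR{T}{k}\dofT{\bvec v}}\lesssim\nSrot{T}{\dofT{\bvec v}}$. For the latter I would invoke the uniform well-posedness and stability of the minimisation problem \eqref{eq:min.Jv} from \cite[Proposition 16]{Di-Pietro.Droniou:23*1}. We then test \eqref{eq:def.potSrot} with $\CURL r+\bvec w=\SProt\dofT{\bvec v}$, decomposed via \eqref{eq:decomp.poly2}. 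The decomposition is uniformly stable, with $\norm{T}{\CURL r}+\norm{T}{\bvec w}\lesssim\norm{T}{\CURL r+\bvec w}$, and we may choose $r$ with zero average so that $\norm{T}{r}\lesssim h_T\norm{T}{\CURL r}$. This yields (c).

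The $\XSrot{T}$ equivalence then follows exactly as above. Its two ingredients are (c) and the bound $\nSrot{T}{\IdRgradT\bvec p}\lesssim\norm{T}{\bvec p}$ for $\bvec p\in\Poly{k}(T)^2$; the stabilisation is handled as before, using the consistency property \eqref{eq:const.pol.P.rot.k}.
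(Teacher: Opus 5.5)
Your proposal is correct and follows the same route as the paper. The paper invokes the argument of \cite[Lemma 5]{Di-Pietro.Droniou:23}: the upper bound comes from boundedness of the potential and of the interpolator on polynomials, and the lower bound from the triangle inequality through $\ISgradT\SPgrad\dofT{q}$. It combines this with the boundedness of $\SPgrad$ and $\SProt$ recorded in Proposition~\ref{prop:boundedness.local.operators}, which you essentially reprove along the lines of \cite[Proposition 13]{Di-Pietro.Droniou:21}.

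Two of your references are superfluous. You do not need Remark~\ref{rem:extention.SPGRAD}, since you only test with $\bvec w\in\cRoly{k+2}(T)$. Nor do you need the consistency property \eqref{eq:const.pol.P.rot.k}, since it plays no role in the equivalence.
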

The proof of this equivalence follows the same reasoning as \cite[Lemma 5]{Di-Pietro.Droniou:23} and is based on appropriate boundedness properties of the potential reconstructions $\SPgrad$ and $\SProt$, obtained as in \cite[Proposition 13]{Di-Pietro.Droniou:21}.

\begin{proposition}[Boundedness of local operators of the DS($k$) complex]
  \label{prop:boundedness.local.operators}
  For all $T\in\Th$, it holds
  \begin{alignat*}{2}
    \norm{T}{\SPgrad\dof{q}{T}}+h_T\nSrot{T}{\SGRAD{T}\dof{q}{T}} &\lesssim \nSgrad{T}{\dof{q}{T}} &&\qquad \forall \dof{q}{T}\in\XSgrad{T},
    \\
    \norm{T}{\SProt\dof{\bvec v}{T}}
    + h_T \norm{T}{\SROT{T}\dof{\bvec v}{T}} &\lesssim \nSrot{T}{\dof{\bvec v}{T}} &&\qquad \forall \dof{\bvec v}{T}\in\XSrot{T}.
  \end{alignat*}
\end{proposition}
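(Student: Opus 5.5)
Each bound will be proved by testing the defining relation of the operator with a suitable polynomial and using standard tools on polynomial spaces over mesh-regular polygons. These tools are: discrete trace inequalities $\norm{E}{r}\lesssim h_T^{-1/2}\norm{T}{r}$ for $r\in\Poly{\ell}(T)$; inverse inequalities $\norm{T}{\GRAD r}\lesssim h_T^{-1}\norm{T}{r}$; and $|r(\bvec{x}_V)|\lesssim h_T^{-1}\norm{T}{r}$. I will prove the estimates in the order in which the operators depend on each other: first $\SROT{T}$, then the serendipity operator $\SR{T}{k}$, then $\SProt$, then $\SGRAD{T}$ (edge part $\Gqet$ and element part $\nablaT$), and finally $\SPgrad$.

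\textbf{Second line.} Take $r=\SROT{T}\dof{\bvec v}{T}$ in \eqref{eq:def.SROT}. The inverse and trace inequalities give
\[
\norm{T}{\SROT{T}\dof{\bvec v}{T}}^2\lesssim h_T^{-1}\Big(\norm{T}{\bvec v_T}+\sum_{E\in\ET} h_T^{1/2}\norm{E}{\bvec v_E}\Big)\norm{T}{\SROT{T}\dof{\bvec v}{T}},
\]
which yields the bound on $h_T\norm{T}{\SROT{T}\dof{\bvec v}{T}}$. The same argument shows $h_T\norm{T}{\ROT^{k-1}_T\injrot\dof{\bvec v}{T}}\lesssim\nSrot{T}{\dof{\bvec v}{T}}$.

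For $\SR{T}{k}$, I would invoke the boundedness of the serendipity operator from \cite[Proposition 16]{Di-Pietro.Droniou:23*1}:
\[
\norm{T}{\SC{T}{k}\dof{\bvec v}{T}}\lesssim \norm{T}{\bvec v_T}+\sum_{E\in\ET}h_T^{1/2}\norm{E}{v_E},
\]
with the $\ROT^{k-1}_T$ term controlled as just shown. If this is not stated in a directly usable form, I would rederive it. The test function $\bvec w_0\in\Poly{k}(T)^2$ with $\Rcproj{k-1}{T}\bvec w_0=\Rcproj{k-1}{T}\bvec v_T$ and zero $\Roly{}$-component is bounded by $\norm{T}{\bvec v_T}$. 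Its value $\mathcal J(\dof{\bvec v}{T};\bvec w_0)$ is bounded (after scaling) by $\nSrot{T}{\dof{\bvec v}{T}}$. The minimiser then satisfies $\mathcal J(\cdot)\le\mathcal J(\bvec w_0)$. Finally, $\mathcal J$ controls the $\Roly{}$-part of $\bvec w-\bvec w_0$ by a norm-equivalence argument on the finite-dimensional affine space, scaled to the reference size. This scaling/norm-equivalence step is the one I expect to be delicate, since uniformity in $T$ requires a proper scaling argument on polygons with a bounded number of edges.

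Next, for $\SProt$, write $\SProt\dof{\bvec v}{T}=\CURL r+\bvec w$ via \eqref{eq:decomp.poly2}. Use stability of this decomposition, $\norm{T}{\CURL r}+\norm{T}{\bvec w}\lesssim\norm{T}{\CURL r+\bvec w}$ (from \cite{Di-Pietro.Droniou:23}), and a Poincaré inequality $\norm{T}{r}\lesssim h_T\norm{T}{\CURL r}$ for $r$ with zero mean. Testing \eqref{eq:def.potSrot} with this $(r,\bvec w)$ and applying the bounds above then gives $\norm{T}{\SProt\dof{\bvec v}{T}}\lesssim\nSrot{T}{\dof{\bvec v}{T}}$.

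\textbf{First line.} For $\SGRAD{T}$, the vertex components $\Gqv$ appear in $\nSrot{T}{\cdot}$ with weight $h_T^2$; in $h_T^2\nSrot{T}{\cdot}^2$ they therefore get weight $h_T^4$, matching $\nSgrad{T}{\cdot}$. The normal components $\Gqen$ match directly. For $\Gqet$, test \eqref{eq:def.Gqet} with $r=\Gqet\dof{q}{E}$ and use one-dimensional inverse inequalities together with $|r(\bvec x_V)|\lesssim h_E^{-1/2}\norm{E}{r}$. Mesh regularity ($h_E\simeq h_T$) then gives
\[
h_T\norm{E}{\Gqet\dof{q}{E}}\lesssim\norm{E}{q_E}+h_T^{1/2}|q_V|.
\]
For $\nablaT$, test \eqref{def:nablaT} with $\bvec w=\nablaT\dof{q}{T}$ as in the rotor case.

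Finally, for $\SPgrad$, choose $\bvec w\in\cRoly{k+2}(T)$ with $\DIV\bvec w=\SPgrad\dof{q}{T}$ and $\norm{T}{\bvec w}\lesssim h_T\norm{T}{\SPgrad\dof{q}{T}}$; the scaled inverse of this isomorphism is uniformly bounded, again by scaling. Then use the bound on $\SProt\SGRAD{T}$ just obtained, which contributes $h_T\nSrot{T}{\SGRAD{T}\dof{q}{T}}\lesssim\nSgrad{T}{\dof{q}{T}}$. The remaining ingredient is the trace bound
\[
h_T^{1/2}\norm{\partial T}{\gammaS\dof{q}{T}}\lesssim\nSgrad{T}{\dof{q}{T}},
\]
which follows from \eqref{eq:def.gammaS} by norm equivalence on each edge of the unisolvent edge data $(q_E,q_V)$.
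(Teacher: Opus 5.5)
Your proposal is correct and follows the route the paper indicates: the paper gives no detailed proof and defers to the argument of \cite[Proposition 13]{Di-Pietro.Droniou:21}, namely testing each defining relation with the operator's own output (or a bounded preimage under $\DIV$ or $\CURL$), applying discrete trace and inverse inequalities, and taking the boundedness of the serendipity operator from \cite{Di-Pietro.Droniou:23*1}, exactly as you do. Citing that reference for the serendipity bound is the right call, since your fallback rederivation is not a pure scaling argument: the fact that the least-squares functional controls the constrained space relies on the geometric assumption behind the choice $\eta_T=3$, and uniformity over general polygons needs the quantitative treatment given there.
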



\section{Analytical properties of the discrete Stokes complex}\label{sec:analytical_properties}

In this section we discuss the analytical properties of the discrete Stokes complex in the top row of \eqref{eq:double.complex}.

\subsection{Primal and adjoint consistency}

The space $\bvec{H}_{\DIV,0}(\Omega)$ (resp.~$H_{1,0}(\Omega)$) is the subspace of $\bvec{H}_{\DIV}(\Omega)$ (resp.~$H_{1}(\Omega)$)  spanned by functions whose normal trace (resp.~trace) vanishes on the boundary of $\Omega$.
We denote by $H_k(\Th) \ni v \mapsto \seminorm{H_k(\Th)}{v} \coloneq \big( \sum_{T \in \Th} \seminorm{H_k(T)}{v}^2 \big)^{\nicefrac12} \in \Real$ the broken $H_k$-seminorm, with  $\seminorm{H_k(T)}{{\cdot}}$ denoting the standard seminorm of $H_k(T)$.
The semi-norm $\seminorm{H_{(k+1,2)}(\Th)^2}{{\cdot}}$ on the space $H_{\max(k+1,2)}(\Th)^2$ is defined by: for all $\bvec{v}\in H_{\max(k+1,2)}(\Th)^2$,
\begin{gather*}
  \seminorm{H_{(k+1,2)}(\Th)^2}{\bvec{v}}\coloneq \left(\sum_{T\in\Th}\seminorm{H_{(k+1,2)}(T)^2}{\bvec{v}}^2\right)^{\nicefrac12}
  \\
  \text{ with }\quad
  \seminorm{H_{(k+1,2)}(T)^2}{\bvec{v}}\coloneq\left\{
  \begin{array}{l@{\quad}l}
    \seminorm{H_1(T)^2}{\bvec{v}}+h_T\seminorm{H_2(T)^2}{\bvec{v}}&\mbox{ if $k=0$},\\
    \seminorm{H_{k+1}(T)^2}{\bvec{v}}&\mbox{ if $k\ge 1$}.
  \end{array}\right.
\end{gather*}
The proofs of the theorems stated below are omitted as they are similar to those of \cite[Section 6]{Di-Pietro.Droniou:23}, using the serendipity framework of \cite{Di-Pietro.Droniou:23*1} and, for \eqref{eq:adjoint.const.Sgrad}, invoking Remark \ref{rem:extention.SPGRAD}.
We just note that these proofs are based on Proposition \ref{prop:boundedness.local.operators},the polynomial consistency properties expressed by \eqref{eq:const.pol.P.rot.k} and \eqref{eq:poly.consistency.Pgrad}, and the two following boundedness properties of the interpolators (which can be established along the same lines as \cite[Lemma 6]{Di-Pietro.Droniou:23}): For all $T\in\Th$,
\begin{equation*}
  \begin{aligned}
    \nSgrad{T}{\ISgradT q} &\lesssim \sum_{i=0}^{2} h_T^{i} \seminorm{H_{i}(T)}{q} \qquad \forall q \in H_2(T),\\
    \nSrot{T}{\IdRgradT \bvec{v}} &\lesssim \sum_{i=0}^{2} h_T^{i} \seminorm{H_{i}(T)^2}{\bvec{v}} \qquad \forall \bvec{v} \in H_2(T)^2.
  \end{aligned}
\end{equation*}

\begin{theorem}[Consistency results on $\XSgrad{\Th}$]~
  \label{th:const.grad}
  \begin{enumerate}
  \item \emph{Consistency of the potential and gradient reconstructions}.    For all $T\in\Th$ and all $q\in H_{k+2}(T)$, it holds
    \begin{equation*}
      \norm{T}{\SPgrad(\ISgradT q) -q}
      + h_T \norm{T}{\SProt\SGRAD{T}  (\ISgradT q) -\GRAD q} \lesssim h^{k+2}_T\seminorm{H_{k+2}(T)}{q}.
    \end{equation*}
  \item \emph{Consistency of $(\cdot,\cdot)_{2,T}$}. It holds, for all $T\in\Th$ and all  $(r,\dofT{q})\in H_{k+2}(T)\times \XSgrad{T}$,
    \[
    \left|\int_T r \, \SPgrad\dofT{q} - (\ISgradT r ,\dofT{q})_{2,T}\right|\lesssim h^{k+2}_T \seminorm{H_{k+2}(T)}{r} \, \norm{2,T}{\dofT{q}}.
    \]
  \item \emph{Adjoint consistency of $\SGRAD{h}$}.
    Let $\bvec{\mathcal{D}} \coloneq \Cspace{0}(\overline{\Omega})^2\cap \bvec{H}_{\DIV,0}(\Omega)$ and define the adjoint consistency error associated with $\SGRAD{h}$ as the bilinear form $\acSgrad:\bvec{\mathcal{D}}\times \XSgrad{\Th}\rightarrow \Real$ such that, for all $(\bvec v, \dofh{q}) \in \bvec{\mathcal{D}}\times \XSgrad{\Th}$,
    \[
    \acSgrad(\bvec v,\dofh{q})\coloneq  \sum_{T\in\Th}\left[ (\IdRgradT \bvec{v}_{|T}, \SGRAD{T}\dof{q}{T})_{\ROT,T}+ \int_T \DIV \bvec{v}\,\SPgrad\dof{q}{T}\right].
    \]
    Then, for all $\bvec{v}\in\bvec{\mathcal{D}}\cap H_{\max(k+1,2)}(\Th)^2$ and all $\dofh{q}\in\XSgrad{\Th}$, it holds
    \begin{equation}
      \label{eq:adjoint.const.Sgrad}
      |\acSgrad(\bvec v,\dofh{q})|\lesssim
      h^{k+1}\seminorm{H_{(k+1,2)}(\Th)^2}{\bvec v} \, \norm{\ROT,h}{\SGRAD{h}\dofh{q}}.
    \end{equation}
  \end{enumerate}
\end{theorem}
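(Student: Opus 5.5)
The plan follows the three-point structure of \cite[Section 6]{Di-Pietro.Droniou:23}. The common tools are the polynomial consistencies \eqref{eq:poly.consistency.Pgrad} and \eqref{eq:const.pol.P.rot.k}, the boundedness of Proposition~\ref{prop:boundedness.local.operators}, and the interpolator bounds stated just before the theorem.

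\emph{Point 1.} Let $\hat q\coloneq\lproj{k+1}{T}q$, or, to keep trace terms manageable, an averaged Taylor polynomial of degree $k+1$ of $q$ on the star-shaped element $T$. By \eqref{eq:poly.consistency.Pgrad}, $\SPgrad\ISgradT q-q=\SPgrad\ISgradT(q-\hat q)+(\hat q-q)$. Proposition~\ref{prop:boundedness.local.operators} and the interpolator bound give $\norm{T}{\SPgrad\ISgradT(q-\hat q)}\lesssim\sum_{i\le2}h_T^i\seminorm{H_i(T)}{q-\hat q}$, and polynomial approximation bounds this by $h_T^{k+2}\seminorm{H_{k+2}(T)}{q}$. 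For the gradient term, \eqref{eq:commutation.grad} gives $\SProt\SGRAD{T}\ISgradT q=\SProt\IdRgradT\GRAD q$. We then apply the same argument with $\vlproj{k}{T}\GRAD q$, using \eqref{eq:const.pol.P.rot.k} and the bound on $\SProt$. This yields $h_T^{k+1}\seminorm{H_{k+1}(T)^2}{\GRAD q}$, which is at most $h_T^{k+1}\seminorm{H_{k+2}(T)}{q}$ (for $k=0$, the $H_2$ seminorm of $\GRAD q$ involves third derivatives, so we take $q\in H_3$ or use the $\max$ regularity; I would handle this as in the reference).

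\emph{Point 2.} We write
\[
\int_T r\,\SPgrad\dofT q-(\ISgradT r,\dofT q)_{2,T}=\int_T (r-\SPgrad\ISgradT r)\SPgrad\dofT q - s_{2,T}(\ISgradT r,\dofT q).
\]
The first term is controlled by Point~1 together with norm equivalence. For the stabilisation, polynomial consistency makes $s_{2,T}(\ISgradT\hat r,\cdot)=0$ for $\hat r\in\Poly{k+1}(T)$. Hence $|s_{2,T}(\ISgradT r,\dofT q)|\le \nSgrad{T}{(\ISgradT-\ISgradT\SPgrad\ISgradT)(r-\hat r)}\,\norm{2,T}{\dofT q}$, and boundedness of the interpolator and the potential yields $h_T^{k+2}\seminorm{H_{k+2}(T)}{r}$.

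\emph{Point 3.} This point is the main obstacle. We split $(\IdRgradT\bvec v,\SGRAD{T}\dofT q)_{\ROT,T}$ into its consistent part $\int_T\SProt\IdRgradT\bvec v\cdot\SProt\SGRAD{T}\dofT q$ and its stabilisation part. The stabilisation part is handled as in Point~2, using $\vlproj{k}{T}\bvec v$, which is annihilated by $s_{\ROT,T}$ by \eqref{eq:const.pol.P.rot.k}. This gives $h_T^{k+1}\seminorm{H_{(k+1,2)}(T)^2}{\bvec v}\norm{\ROT,T}{\SGRAD{T}\dofT q}$.

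For the consistent part, we replace $\SProt\IdRgradT\bvec v$ by $\vlproj{k}{T}\bvec v$, paying an error of the same order. We then test the extended definition \eqref{eq:def.SPGRAD} (valid for $\bvec w\in\Poly{k}(T)^2$ by Remark~\ref{rem:extention.SPGRAD}) with $\bvec w=\vlproj{k}{T}\bvec v$. This gives
\[
\int_T\vlproj{k}{T}\bvec v\cdot\SProt\SGRAD{T}\dofT q=-\int_T\SPgrad\dofT q\,\DIV\vlproj{k}{T}\bvec v+\sum_{E\in\ET}\omega_{TE}\int_E\gammaS\dofT q\,(\vlproj{k}{T}\bvec v\cdot\normal_E).
\]
Adding $\int_T\DIV\bvec v\,\SPgrad\dofT q$ and integrating by parts back leaves two kinds of terms. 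The volume terms are $\int_T\SPgrad\dofT q\,\DIV(\bvec v-\vlproj{k}{T}\bvec v)$ paired with boundary terms, and together they equal $\sum_E\omega_{TE}\int_E(\gammaS\dofT q-\SPgrad\dofT q)(\vlproj{k}{T}\bvec v-\bvec v)\cdot\normal_E$. The remaining boundary terms are $\sum_E\omega_{TE}\int_E\gammaS\dofT q\,\bvec v\cdot\normal_E$; these cancel upon summation over $T$, because $\gammaS$ is single-valued on edges (it depends only on $q_E,q_V$), $\bvec v\cdot\normal_E$ is continuous, and $\bvec v\cdot\normal_\Omega=0$ on $\partial\Omega$. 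The leftover edge terms are bounded by $h_T^{k+1/2}\seminorm{H_{(k+1,2)}(T)^2}{\bvec v}$ (trace inequality) times $\norm{\partial T}{\gammaS\dofT q-\SPgrad\dofT q}$. The crux is then to show
\[
\norm{\partial T}{\gammaS\dofT q-\SPgrad\dofT q}\lesssim h_T^{1/2}\nSrot{T}{\SGRAD{T}\dofT q},
\]
so that the bound involves the gradient rather than $\dofT q$ itself. I would prove this by a Poincaré/kernel argument. The difference vanishes on $\Ker\SGRAD{T}$, namely on interpolates of constants (by \eqref{eq:trace.exactness} and \eqref{eq:poly.consistency.Pgrad}), so after subtracting a constant we can use a discrete Poincaré inequality on $\XSgrad{T}$ through scaling and the equivalence of norms on finite-dimensional spaces modulo the kernel. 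Finally, a Cauchy--Schwarz over elements together with norm equivalence concludes the proof.
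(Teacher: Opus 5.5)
Your plan follows the route the paper relies on. The paper omits the proof and refers to \cite[Section 6]{Di-Pietro.Droniou:23}, invoking Remark~\ref{rem:extention.SPGRAD} for the adjoint consistency. Points 1 and 2 go by polynomial consistency, local boundedness and the interpolator bounds. Point 3 splits off the stabilisation, tests the extended definition \eqref{eq:def.SPGRAD} with $\vlproj{k}{T}\bvec v$, cancels $\sum_{E}\omega_{TE}\int_E\gammaS\dofT q\,\bvec v\cdot\normal_E$ across elements, and reduces everything to the local bound $\norm{\partial T}{\gammaS\dofT q-\SPgrad\dofT q}\lesssim h_T^{1/2}\nSrot{T}{\SGRAD{T}\dofT q}$. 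Your algebra for Point 3 is correct; the volume term $\int_T\GRAD\SPgrad\dofT q\cdot(\bvec v-\vlproj{k}{T}\bvec v)$ vanishes because $\SPgrad\dofT q\in\Poly{k+1}(T)$.

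The step that does not go through as you justify it is this crux bound. On general polygons there is no reference element, so ``scaling and equivalence of norms on finite-dimensional spaces modulo the kernel'' only yields a constant depending on the shape of $T$. Making it depend only on the mesh regularity parameter would need a compactness argument over admissible shapes, passing through the serendipity operator hidden in $\SProt$, and you do not provide one.

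A constructive proof using only tools already in the paper is short.
Fix $V_0\in\VT$ and set $\dofT z\coloneq\dofT q-\ISgradT q_{V_0}$, the difference with the interpolate of a constant.
\begin{itemize}
\item By \eqref{eq:commutation.grad}, \eqref{eq:trace.exactness} and \eqref{eq:poly.consistency.Pgrad}, $\SGRAD{T}\dofT z=\SGRAD{T}\dofT q$ and $\gammaS\dofT z-\SPgrad\dofT z=\gammaS\dofT q-\SPgrad\dofT q$.
\item We have $z_{V_0}=0$, and $\gammaS\dofT z$ is continuous on $\partial T$ with tangential derivative $\Gqet\dof{q}{E}$ on each edge by \eqref{eq:link.gammaS.Gqet}. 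Integrating along $\partial T$ therefore gives $|z_V|\lesssim h_T^{1/2}\sum_{E\in\ET}\norm{E}{\Gqet\dof{q}{E}}$ for all $V\in\VT$.
\item The arguments of Proposition~\ref{prop:pc.grad.coho} then apply to $\dofT z$. The only change is the vertex terms in \eqref{eq:q.r.G}, now controlled by the previous bound together with $|r(\bvec x_V)|\lesssim h_E^{1/2}\norm{E}{z_E}$. This gives $\nSgrad{T}{\dofT z}\lesssim h_T\nSrot{T}{\SGRAD{T}\dofT q}$.
\item Conclude with $\norm{E}{\gammaS\dofT z}\lesssim\norm{E}{z_E}+h_T^{1/2}\sum_{V\in\VE}|z_V|$, which is a genuine scaling argument since it takes place on an edge. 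Add a discrete trace inequality and Proposition~\ref{prop:boundedness.local.operators} for $\SPgrad\dofT z$.
\end{itemize}

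For the $k=0$ case you left open in Point 1, no $H_3$ regularity is needed. $\SProt\dofT{\bvec v}$ only depends on $\bvec v_T$ and $(\bvec v_E\cdot\tangent_E)_{E\in\ET}$. So apply Proposition~\ref{prop:boundedness.local.operators} after zeroing the vertex components, and use a continuous trace inequality. This gives $\norm{T}{\SProt\IdRgradT\bvec w}\lesssim\norm{T}{\bvec w}+h_T\seminorm{H_1(T)^2}{\bvec w}$, and taking $\bvec w=\GRAD q-\vlproj{0}{T}\GRAD q$ yields $h_T\seminorm{H_2(T)}{q}$.
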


\begin{remark}[Adjoint consistency using the $(\cdot,\cdot)_{1,T}$ product]
    A consistency result similar to \eqref{eq:adjoint.const.Sgrad} can be obtained for the adjoint consistency error obtained replacing $(\cdot,\cdot)_{\ROT,T}$ with $(\cdot,\cdot)_{1,T}$, the $L^2$-discrete scalar product on $\XSrot{T}$ obtained by tensorising the scalar product on $\underline{H}_{1}^{k+1}(T)$ defined in \cite[Eq.~(4.14)]{Di-Pietro.Droniou:23}.
\end{remark}

\begin{theorem}[Consistency results on $\XSrot{\Th}$]~
  \begin{enumerate}
  \item \emph{Consistency of the potential reconstruction}.
    For all $T\in\Th$, it holds
    \begin{equation*}
      \norm{T}{\SProt(\IdRgradT \bvec v) -\bvec v }\lesssim h^{k+1}_T \seminorm{H_{(k+1,2)}(T)^2}{\bvec v} \qquad \forall \bvec v \in H_{\max(k+1,2)}(T)^2.
    \end{equation*}
  \item \emph{Primal consistency of $\SROT{T}$.}
    For all $T\in\Th$ and all $\bvec v \in \Cspace{0}(\overline{T})^2$ such that $\ROT \bvec v \in H_{k+1}(T)$, it holds
    \begin{equation*}
      \norm{T}{\SROT{T} (\IdRgradT \bvec v) -\ROT \bvec v}\lesssim h^{k+1}_T\seminorm{H_{k+1}(T)}{\ROT \bvec{v}}.
    \end{equation*}
  \item \emph{Consistency of $(\cdot,\cdot)_{\ROT,T}$}. It holds, for all $T\in\Th$, and all  $(\bvec w ,\dofT{\bvec v})\in H_{\max(k+1,2)}(T)^2\times \XSrot{T}$,
    \begin{equation*}
      \left|\int_T \bvec w \, \SProt \dofT{\bvec v} - (\IdRgradT \bvec w ,\dofT{\bvec v})_{\ROT,T}\right|\lesssim h^{k+1}_T \seminorm{H_{(k+1,2)}(T)^2}{\bvec w} \norm{2,T}{\dofT{\bvec v}}.
    \end{equation*}
  \item \emph{Adjoint consistency of $\SROT{h}$}.
    Let $\mathcal{D} \coloneq \Cspace{0}(\overline{\Omega}) \cap H_{1,0}(\Omega)$ and define
    the adjoint consistency error associated to $\SROT{h}$ as the bilinear form $\acSrot: \mathcal{D} \times \XSrot{\Th}\to \Real$ such that, for all $(r,\dofh{\bvec v})\in \mathcal{D} \times \XSrot{\Th}$,
    \begin{equation*}
      \acSrot(r,\dofh{\bvec v}) \coloneq  \sum_{T\in\Th} \left[  \int_T \lproj{k}{T}r \, \SROT{T}\dofT{\bvec v} - \int_T \CURL r \cdot \SProt \dofT{\bvec v}\right].
    \end{equation*}
    Then, for all $r\in \mathcal{D}\cap H_{k+2}(\Th)$ and all $\dofh{\bvec v}\in\XSrot{\Th}$,
    \begin{equation*}
      |\acSrot(r,\dofh{\bvec v})| \lesssim h^{k+1}\seminorm{H_{k+2}(\Th)}{r}\norm{\ROT,h}{\dofh{\bvec v}}.
    \end{equation*}
  \end{enumerate}
\end{theorem}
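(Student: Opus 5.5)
We follow the template of \cite[Section 6]{Di-Pietro.Droniou:23}, proving the four items in order. Each local estimate rests on two ingredients: polynomial consistency, namely \eqref{eq:const.pol.P.rot.k} for $\SProt$ and \eqref{eq:commutation.rot} for $\SROT{T}$, and boundedness of the operators (Proposition \ref{prop:boundedness.local.operators}) and of the interpolator $\IdRgradT$. The adjoint consistency is then assembled globally.

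\textbf{Items 1 and 2.} For the potential, let $\bvec w\coloneq\vlproj{k}{T}\bvec v$ if $k\ge1$; for $k=0$ take instead the averaged Taylor polynomial of degree $1$ (or $\vlproj{1}{T}\bvec v$ projected appropriately). Since $\SProt\IdRgradT\bvec w=\bvec w$ for polynomials of degree $\le k$ (for $k=0$ one only has degree $0$, so use $\bvec w=\vlproj{0}{T}\bvec v$), we write
\[
\SProt\IdRgradT\bvec v-\bvec v=\SProt\IdRgradT(\bvec v-\bvec w)-(\bvec v-\bvec w).
\]
Boundedness of $\SProt$ and of the interpolator then gives the bound $\sum_{i=0}^2h_T^i|\bvec v-\bvec w|_{H_i(T)^2}$. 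Polynomial approximation estimates bound this by $h_T^{k+1}|\bvec v|_{H_{k+1}(T)^2}$ for $k\ge1$. For $k=0$ they give $h_T(|\bvec v|_{H_1}+h_T|\bvec v|_{H_2})$, since the $H_2$ seminorm of $\bvec v-\bvec w$ equals that of $\bvec v$; this is exactly why the mixed seminorm $\seminorm{H_{(k+1,2)}(T)^2}{\cdot}$ appears. For item 2, note that the edge components of $\IdRgradT\bvec v$ only need continuity of $\bvec v$. Testing \eqref{eq:def.SROT} with $r\in\Poly{k}(T)$ and integrating by parts, using $\vlproj{k-1}{T}\bvec v$ against $\CURL r\in\Poly{k-1}(T)^2$ and $\vlproj{k}{E}$ against $r_{|E}$, gives $\SROT{T}\IdRgradT\bvec v=\lproj{k}{T}\ROT\bvec v$. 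This extends \eqref{eq:commutation.rot} to this regularity by density, and the estimate follows from the approximation properties of $\lproj{k}{T}$.

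\textbf{Item 3.} Split
\[
\int_T\bvec w\cdot\SProt\dofT{\bvec v}-(\IdRgradT\bvec w,\dofT{\bvec v})_{\ROT,T}
=\int_T(\bvec w-\SProt\IdRgradT\bvec w)\cdot\SProt\dofT{\bvec v}-s_{\ROT,T}(\IdRgradT\bvec w,\dofT{\bvec v}).
\]
The first term is handled by item 1 and boundedness of $\SProt$. For the stabilisation, polynomial consistency makes $s_{\ROT,T}(\IdRgradT\bvec p,\cdot)=0$ for $\bvec p\in\Poly{k}(T)^2$. Subtracting $\bvec p=\vlproj{k}{T}\bvec w$ (or the degree-$0$ approximation for $k=0$) and using Cauchy--Schwarz, the norm equivalence, and the bounds above yields $h_T^{k+1}\seminorm{H_{(k+1,2)}(T)^2}{\bvec w}\,\norm{\ROT,T}{\dofT{\bvec v}}$.

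\textbf{Item 4 (main obstacle).} We test \eqref{eq:def.potSrot} with $(\lproj{k+1}{T}r,\bvec 0)$ rather than $(r,\bvec 0)$, since $r$ is not polynomial. This gives
\[
\int_T\SProt\dofT{\bvec v}\cdot\CURL\lproj{k+1}{T}r=\int_T\SROT{T}\dofT{\bvec v}\,\lproj{k+1}{T}r+\sum_{E\in\ET}\omega_{TE}\int_E(\bvec v_E\cdot\tangent_E)\lproj{k+1}{T}r.
\]
Since $\SROT{T}\dofT{\bvec v}\in\Poly{k}(T)$, we may replace $\lproj{k+1}{T}r$ by $\lproj{k}{T}r$ in the first term. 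Hence $\acSrot(r,\dofh{\bvec v})$ equals
\[
\sum_T\Big[\int_T\SProt\dofT{\bvec v}\cdot\CURL(\lproj{k+1}{T}r-r)-\sum_{E\in\ET}\omega_{TE}\int_E(\bvec v_E\cdot\tangent_E)\lproj{k+1}{T}r\Big].
\]
Because $\bvec v_E$ is single-valued and $r=0$ on $\partial\Omega$, we can add $\sum_T\sum_E\omega_{TE}\int_E(\bvec v_E\cdot\tangent_E)\,r=0$, so the edge terms become $\int_E(\bvec v_E\cdot\tangent_E)(r-\lproj{k+1}{T}r)$. The delicate step is the volume term, because $\CURL(\lproj{k+1}{T}r-r)$ is only of order $h_T^{k+1}$ and so controls $\SProt\dofT{\bvec v}$ directly. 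Edge terms would instead give $h_T^{k+3/2}$ times $\norm{E}{\bvec v_E}$. To balance the scalings, we insert $\SProt\dofT{\bvec v}$ on edges: $\bvec v_E\cdot\tangent_E=\SProt\dofT{\bvec v}\cdot\tangent_E+(\bvec v_E-\SProt\dofT{\bvec v})\cdot\tangent_E$. The first piece recombines with the volume term by integration by parts, giving $\int_T\ROT\SProt\dofT{\bvec v}\,(r-\lproj{k+1}{T}r)$. This piece is in fact zero if one uses $\lproj{k+1}{T}$ orthogonality, since $\ROT\SProt\dofT{\bvec v}\in\Poly{k-1}(T)$. The remainder is controlled by the stabilisation $s_{\ROT,T}$, through the discrete trace inequality $h_T^{1/2}\norm{E}{\bvec v_E-\SProt\dofT{\bvec v}}\lesssim\nSrot{T}{\dofT{\bvec v}-\IdRgradT\SProt\dofT{\bvec v}}+\norm{T}{\SProt\dofT{\bvec v}}$. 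Trace approximation bounds $\norm{E}{r-\lproj{k+1}{T}r}\lesssim h_T^{k+3/2}\seminorm{H_{k+2}(T)}{r}$. Summing with Cauchy--Schwarz and the norm equivalence concludes.
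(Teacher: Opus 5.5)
Your proposal is correct and follows the route the paper indicates by deferring to \cite[Section 6]{Di-Pietro.Droniou:23}. The local estimates come from the polynomial consistency \eqref{eq:const.pol.P.rot.k} and the boundedness of $\SProt$ and $\IdRgradT$, and the adjoint consistency is obtained by testing \eqref{eq:def.potSrot} with $\lproj{k+1}{T}r$, using that $\bvec v_E$ is single-valued and $r_{|\partial\Omega}=0$. The ``balancing'' detour in item 4 is valid but unnecessary, since $h_T^{k+3/2}\norm{E}{\bvec v_E}\le h_T^{k+1}\nSrot{T}{\dofT{\bvec v}}$ already follows from the definition \eqref{eq:def.norm.rotT}.
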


\subsection{Poincaré--Wirtinger and Poincaré inequalities}\label{sec:poincare}

\subsubsection{Poincaré--Wirtinger inequalities}

We state here Poincaré inequalities for both the operators in the DS($k$) complex. The proofs of the following
theorems are given in Section \ref{sec:proof.poincares}, using the abstract setting developed in Appendix \ref{appendix:abstract.poincare}.

\begin{theorem}[Poincaré inequality on $\XSgrad{\Th}$]
  \label{th:pc.grad}
  Denoting by $(\Ker\SGRAD{h})^{\perp}$ the orthogonal complement in $\XSgrad{\Th}$ of $\Ker\SGRAD{h}$ for the inner product $(\cdot,\cdot)_{2,h}$, it holds
  \begin{equation*} 
    \nSgrad{h}{\dof{q}{h}} \lesssim \nSrot{h}{\SGRAD{h}\dof{q}{h}}\qquad
    \forall \dofh{q}\in(\Ker\SGRAD{h})^{\perp}.
  \end{equation*}
\end{theorem}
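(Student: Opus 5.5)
By the equivalence of norms on $(\Ker\SGRAD{h})^\perp$, the statement amounts to: for every $\dofh{q}\in\XSgrad{\Th}$ there exists $\dofh{r}\in\XSgrad{\Th}$ with $\SGRAD{h}\dofh{r}=\SGRAD{h}\dofh{q}$ and $\nSgrad{h}{\dofh{r}}\lesssim\nSrot{h}{\SGRAD{h}\dofh{q}}$. Indeed, the $(\cdot,\cdot)_{2,h}$-orthogonal projection of $\dofh{r}$ onto $(\Ker\SGRAD{h})^\perp$ is $\dofh{q}$ when $\dofh{q}\in(\Ker\SGRAD{h})^\perp$, and this projection is a contraction in $\norm{2,h}{{\cdot}}\simeq\nSgrad{h}{{\cdot}}$. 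I would therefore build such a lifting explicitly, following the abstract transfer framework of Appendix \ref{appendix:abstract.poincare}. The idea is to use the fact that $\XSgrad{\Th}$ is linked, through the identity arrow of \eqref{eq:double.complex}, with the DDR spaces, for which Poincaré inequalities are known.

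Set $\dofh{\bvec v}\coloneq\SGRAD{h}\dofh{q}\in\XSrot{\Th}=\tXdRgrad{\Th}$. The kernel of $\SGRAD{h}$ consists of the interpolates of constants, since $\Ker\GRAD=\Real$ on connected $\Omega$ and the complex DS($k$) has the correct cohomology. Hence it suffices to reconstruct $\dofh{q}$ up to a constant from $\dofh{\bvec v}$, with control by $\nSrot{h}{\dofh{\bvec v}}$. I would proceed in three steps.

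First, the vertex and edge values. Fix a vertex $V_0$ and set $r_{V_0}=0$. For any other vertex, define $r_V$ by integrating the tangential components $\Gqet\dof{q}{E}=\bvec v_E\cdot\tangent_E$ along a path of edges from $V_0$; this is well defined because $\dofh{\bvec v}$ is a discrete gradient. Similarly, $r_E$ is obtained from $q_E-q_{V}$ through \eqref{eq:def.Gqet}. To control these quantities, I would use the scalar DDR($k+1$) Poincaré inequality on the skeleton: the trace $\gammaS\dofh{r}$ is a continuous piecewise polynomial whose tangential derivative is controlled by $\bvec v_E$. To avoid path-dependent constants, I would rather not integrate along chains of edges. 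Instead I would choose the constant so that $\int_\Omega \SPgradh\dofh{r}=0$, or equivalently define $\dofh{r}=\dofh{q}-\ISgrad c$ with $c$ that mean, and bound $\dofh{r}$ through a duality argument.

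Second, the duality argument, which is the core. Take $\dofh{r}$ with zero mean of $\SPgradh\dofh{r}$. Solve the continuous problem $\DIV\bvec w=\SPgradh\dofh{r}$ with $\bvec w\in H_{1,0}(\Omega)^2$ and $\norm{H_1(\Omega)^2}{\bvec w}\lesssim\norm{\Omega}{\SPgradh\dofh{r}}$ (Bogovskii). Then write
\[
\norm{\Omega}{\SPgradh\dofh{r}}^2=\int_\Omega \DIV\bvec w\,\SPgradh\dofh{r}=\acSgrad(\bvec w,\dofh{r})-(\IdRgrad\bvec w,\SGRAD{h}\dofh{r})_{\ROT,h}.
\]
The second term is bounded by Cauchy–Schwarz and the interpolator boundedness. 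The issue is that $\bvec w$ is only in $H_1$, so I cannot use \eqref{eq:adjoint.const.Sgrad} directly, and it also needs $\Cspace{0}$. I would instead use a lowest-order version, with an $\mathcal{O}(1)$ bound by $\seminorm{H_1}{\bvec w}$, obtained by replacing $\IdRgrad\bvec w$ with the interpolate of a quasi-interpolant (Clément-type) of $\bvec w$ in a conforming $\Poly{1}$ space, which is $H_1$-stable. This yields $\norm{\Omega}{\SPgradh\dofh{r}}\lesssim\nSrot{h}{\dofh{\bvec v}}$.

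Third, the stabilisation part. It remains to bound $\nSgrad{h}{\dofh{r}-\ISgrad\SPgradh\dofh{r}}$ locally by $h_T\nSrot{T}{\dofh{\bvec v}}$ plus the local $L^2$ norm. This is a local Poincaré-type estimate: on each $T$, a difference that is controlled by its discrete gradient up to element constants, obtained by scaling and finite-dimensional norm-equivalence arguments on the reference configuration under mesh regularity, using \eqref{eq:poly.consistency.Pgrad}.

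I expect the main obstacle to be the second step, where $\bvec w$ lacks the regularity required by the adjoint consistency: one needs a uniform $H_1$-stable commuting-type quasi-interpolation into $\tXdRgrad{\Th}$. This is precisely where the abstract appendix framework should enter, transferring the known DDR/$H_1$ Poincaré inequality to DS($k$) via the identity map $\XSrot{\Th}=\tXdRgrad{\Th}$ and bounded extension/reduction operators.
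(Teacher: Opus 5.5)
\paragraph{Verdict: genuine gap.} Your reformulation as a bounded-lifting statement is fine, and so is the normalisation $\int_\Omega\SPgradh\dofh{r}=0$. The core Step~2, however, does not go through.

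\emph{Regularity.} The interpolator $\IdRgrad$ and the estimate \eqref{eq:adjoint.const.Sgrad} both require $\bvec{w}\in\Cspace{0}(\overline{\Omega})^2\cap H_{\max(k+1,2)}(\Th)^2$. Bogovskii only provides $\bvec{w}\in H_{1,0}(\Omega)^2$, so $\IdRgrad\bvec{w}$ is not even defined.

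\emph{The Cl\'ement substitution does not repair this.} On a polygonal mesh, a conforming $\Poly{1}$ field $\bvec{w}_h$ lives on a simplicial submesh and is not $H_2$ inside the elements, so \eqref{eq:adjoint.const.Sgrad} still does not apply to it. More seriously, you are left with the term $\int_\Omega\DIV(\bvec{w}-\bvec{w}_h)\,\SPgradh\dofh{r}$. Its only available bound is $\lesssim\seminorm{H_1(\Omega)^2}{\bvec{w}}\norm{\Omega}{\SPgradh\dofh{r}}\lesssim\norm{\Omega}{\SPgradh\dofh{r}}^2$, which is of the same order as the left-hand side, so the argument is circular.

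\emph{What would be needed.} To gain anything you must integrate by parts element by element and insert the single-valued trace $\gammaS\dofT{r}$. The interelement and boundary terms then cancel, because $\bvec{w}\cdot\normal$ is continuous and vanishes on $\partial\Omega$. You would still have to prove the local bounds $\norm{T}{\GRAD\SPgrad\dofT{r}}+h_T^{-1/2}\norm{\partial T}{\SPgrad\dofT{r}-\gammaS\dofT{r}}\lesssim\nSrot{T}{\SGRAD{T}\dofT{r}}$. These are local Poincar\'e estimates, which is exactly the ingredient you defer to Step~3. There you justify them only by a scaling/reference-element argument, which is not available as such on general polygons, since there is no finite set of reference shapes. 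Once these local bounds are proved explicitly, neither the adjoint consistency nor the Cl\'ement interpolant is needed: a direct HHO-type argument with a continuous trace inequality would close the proof.

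\paragraph{The transfer you propose goes through the wrong complex.} A Poincar\'e inequality for $\tGRAD{h}$ on $\tXdRgrad{\Th}=\XSrot{\Th}$ bounds $\dofh{\bvec{v}}$ by $\tGRAD{h}\dofh{\bvec{v}}$. It says nothing about recovering $\dofh{q}$ from $\SGRAD{h}\dofh{q}$.

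The paper instead applies Proposition~\ref{prop:pc.from.reduced.spaces} with the scalar DDR(0) complex and the cochain maps $\Egradh,\Rgradh$ of \eqref{eq:doublecomplex_ddr0_stokes}. The global (connectivity) part of the inequality is inherited from the known DDR(0) Poincar\'e inequality. Assumption~\ref{ass:pc.reduced} is verified by the explicit local estimate of Proposition~\ref{prop:pc.grad.coho} on $\Image(\Egradh\Rgradh-\Id)$, which works as follows:
\begin{itemize}
\item the vertex values vanish by \eqref{eq:ER-Id.V};
\item $q_E$ is therefore controlled by $\Gqet\dof{q}{E}$ with zero endpoint data;
\item $\Gqen$ and $\Gqv$ are read off directly from $\SGRAD{h}\dofh{q}$;
\item $q_T$ is controlled via $\nablaT\dofT{q}$, using a bounded right inverse of $\DIV$ on $\cRoly{k-1}(T)$.
\end{itemize}
This route avoids both the regularity problem and any compactness or path-integration argument.
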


\begin{remark}[Equivalent formulation of the orthogonality condition]
  Using the consistency of the stabilisation component in \eqref{eq:prod.l2.2.T}, it  can be checked that $\dofh{q}\in(\Ker\SGRAD{h})^\perp$ is equivalent to $\sum_{T\in\Th}\int_T \SPgrad\dof{q}{T}=0$.
\end{remark}

\begin{theorem}[Poincaré inequality on $\XSrot{\Th}$]
  \label{th:pc.rot}
  Denoting by $(\Ker\SROT{h})^{\perp}$ the orthogonal complement in $\XSrot{\Th}$ of $\Ker\SROT{h}$ for the inner product $(\cdot,\cdot)_{\ROT,h}$, it holds
  \begin{equation*}
    \nSrot{h}{\dof{\bvec{v}}{h}} \lesssim \norm{\Omega}{\SROT{h}\dof{\bvec{v}}{h}} \qquad \forall \dof{\bvec{v}}{h}\in(\Ker\SROT{h})^{\perp}.
  \end{equation*}
\end{theorem}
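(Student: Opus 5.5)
The idea is to transfer the Poincaré inequality for the rotor from the tensorised DDR($k+1$) complex to DS($k$), via the abstract framework of Appendix \ref{appendix:abstract.poincare}. The two complexes share the space $\XSrot{\Th}=\tXdRgrad{\Th}$. The only difference is the operator: $\SROT{h}$ is scalar, with values in $\Poly{k}(\Th)$. It corresponds to the skew-symmetric part of the gradient, while the DDR operator is $\tGRAD{h}$. So we cannot invoke the de Rham Poincaré inequality directly.

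Instead, I would proceed by \emph{lifting} through the continuous complex, in three steps.

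\textbf{Step 1 (continuous lift).} Let $\dofh{\bvec v}\in(\Ker\SROT{h})^\perp$ and set $g\coloneq\SROT{h}\dofh{\bvec v}\in\Poly{k}(\Th)$. Since $\ROT:H_1(\Omega)^2\to L^2(\Omega)$ is onto (with $H_2$-regularity of a right inverse on $\Poly{k}$ data, or via the Bogovskii-type regular right inverse of $\ROT$ on Lipschitz domains), there is $\bvec w\in H_1(\Omega)^2$ with $\ROT\bvec w=g$ and $\norm{H_1(\Omega)^2}{\bvec w}\lesssim\norm{\Omega}{g}$.

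\textbf{Step 2 (discrete preimage).} We need a $\dofh{\bvec z}\in\XSrot{\Th}$ with $\SROT{h}\dofh{\bvec z}=g$ and $\nSrot{h}{\dofh{\bvec z}}\lesssim\norm{\Omega}{g}$. The commutation \eqref{eq:commutation.rot} gives $\SROT{h}\IdRgrad\bvec w=\lproj{k}{h}g=g$, but $\IdRgrad$ requires point values, so $\bvec w\in H_1$ is not enough. I would therefore replace $\IdRgrad$ by a bounded cochain-type interpolator built on the Stokes complex. Alternatively, I would use the abstract result of the appendix: if a map $\mathfrak R$ from the discrete to the continuous complex and an extension $\mathfrak E$ back satisfy $\SROT{h}\circ\mathfrak E=\mathfrak{R}$-compatible commutations, with uniform bounds and $\SROT{h}(\mathfrak E\mathfrak R-\Id)=0$, then the continuous Poincaré inequality transfers. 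Concretely I would take the reduction/extension maps $\Rroth,\Eroth$ between DS($k$) and a conforming-type smooth complex (e.g.\ a polygonal Scott--Vogelius-like $H_1$–$L^2$ pair on a triangular submesh, or a mollified/Clément-type interpolant), check the commutation of the extension with $\ROT$, and bound $\nSrot{h}{\Eroth\bvec w}\lesssim\norm{H_1}{\bvec w}$.

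\textbf{Step 3 (conclusion).} With $\dofh{\bvec z}$ in hand, $\dofh{\bvec v}-\dofh{\bvec z}\in\Ker\SROT{h}$. Since $\dofh{\bvec v}$ is orthogonal to this kernel for $(\cdot,\cdot)_{\ROT,h}$, it is the minimal-norm element of $\dofh{\bvec z}+\Ker\SROT{h}$. Hence $\norm{\ROT,h}{\dofh{\bvec v}}\le\norm{\ROT,h}{\dofh{\bvec z}}$, and the norm equivalence $\norm{\ROT,h}{\cdot}\simeq\nSrot{h}{\cdot}$ concludes.

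The main obstacle is Step 2: constructing a uniformly bounded discrete right inverse of $\SROT{h}$. The lowest-order case $k=0$ is most delicate, since vertex values are involved and $\IdRgrad$ is unbounded on $H_1$. I would first try a local construction. Elementwise on a submesh, solve $\ROT\bvec w_T=g-\text{(corrections)}$ with $\bvec w$ built from a continuous piecewise-polynomial lifting of degree $k+2$ on a shape-regular triangulation of each polygon (admissible by the mesh-regularity assumption). Then apply the commuting interpolator to this piecewise-polynomial, $C^0$ field. There $\IdRgrad$ is well defined, and it is bounded by inverse and trace inequalities together with the boundedness of the interpolator stated before Theorem \ref{th:const.grad}.
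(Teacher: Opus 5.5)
\textbf{Gap in Step 2.} Steps 1 and 3 are sound. $\ROT:H_1(\Omega)^2\to L^2(\Omega)$ has a bounded right inverse on $\Omega$, though not an $H_2$-regular one here: $g\in\Poly{k}(\Th)$ jumps across interfaces, so $\ROT\bvec w=g\notin H_1(\Omega)$. The minimal-norm argument of Step 3 is also correct. The problem is Step 2, which carries all the content of the theorem and which you leave open. You offer several constructions (Fortin/Scott--Vogelius-type operators on a submesh, Clément-type interpolants, elementwise liftings with unspecified ``corrections''), but carry out none of them.

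Some of these also rest on inaccurate readings of the paper. Assumption~\ref{ass:pc.reduced} does not ask that $d(E_0\hat R_0-\Id)$ vanish. Here $\SROT{h}(\Eroth\Rroth-\Id)\dofh{\bvec v}=(\lproj{0}{h}-\Id)\SROT{h}\dofh{\bvec v}\neq0$ in general, and it is exactly this defect that Proposition~\ref{prop:pc.curl.coho} controls. Moreover, $\Rroth,\Eroth$ link DS($k$) to DDR(0), not to a conforming complex. Finally, an elementwise construction cannot produce the piecewise-constant part $\lproj{0}{h}g$. Taking $r=1$ in \eqref{eq:def.SROT} shows that $\int_T\SROT{T}\dofT{\bvec z}$ depends only on the edge unknowns, which are shared between elements, so your ``corrections'' hide a global problem.

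\textbf{The missing observation.} The obstacle you identify does not exist. By \eqref{eq:def.SROT}, $\SROT{T}$ involves only $\bvec v_T$ and $\bvec v_E\cdot\tangent_E$, never the vertex unknowns, so point values of $\bvec w$ are never needed. Take $\bvec w$ from Step 1 and set $\dofh{\bvec z}\coloneq((\vlproj{k-1}{T}\bvec w)_{T\in\Th},(\vlproj{k}{E}\bvec w)_{E\in\Eh},(\bvec 0)_{V\in\Vh})$. This is well defined because $\bvec w$ has single-valued traces on edges. For $r\in\Poly{k}(T)$ we have $\CURL r\in\Poly{k-1}(T)^2$ and $r_{|E}\in\Poly{k}(E)$, so the projections drop out of \eqref{eq:def.SROT}. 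An integration by parts, valid for $\bvec w\in H_1(T)^2$, then gives $\SROT{T}\dofT{\bvec z}=\lproj{k}{T}(\ROT\bvec w)=g_{|T}$. The continuous trace inequality yields $\nSrot{T}{\dofT{\bvec z}}^2\le\norm{T}{\bvec w}^2+h_T\sum_{E\in\ET}\norm{E}{\bvec w}^2\lesssim\norm{T}{\bvec w}^2+h_T^2\seminorm{H_1(T)^2}{\bvec w}^2$. Hence $\nSrot{h}{\dofh{\bvec z}}\lesssim\norm{H_1(\Omega)^2}{\bvec w}\lesssim\norm{\Omega}{g}$, with no submesh, no bubble functions and no special treatment of $k=0$.

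\textbf{Comparison with the paper.} Completed this way, your proof differs genuinely from the paper's. The paper splits $g$ into two parts. The part $\lproj{0}{h}g$ is handled by transferring the DDR(0) rotor Poincaré inequality through $\Rroth,\Eroth$ (Proposition~\ref{prop:pc.from.reduced.spaces}). The part $(\Id-\lproj{0}{h})g$ is lifted locally using only element unknowns in $\Roly{k-1}(T)$ (Proposition~\ref{prop:pc.curl.coho}). Your route is shorter and self-contained, needing only a continuous right inverse of $\ROT$ and a trace inequality; this works because $\SROT{h}$ is onto. The paper's route reuses the machinery it needs anyway for Theorem~\ref{th:pc.grad}, where $\SGRAD{h}$ is not onto and no such direct continuous lifting is available.
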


\subsubsection{Preliminary Poincaré inequalities}
\label{sec:intermediate.poincare}

In this section we establish Poincaré inequalities on particular subspaces of the DS($k$) spaces.
The following extension and reduction operators, i.e., continuous cochain maps between the DS($k$) complex and the non-tensorized DDR($0$) complex, are built in \cite[Section~5.2]{Di-Pietro.Droniou.ea:26}:
\begin{equation}\label{eq:doublecomplex_ddr0_stokes}
  \begin{tikzcd}[column sep=3.5em]
    \text{DS($k$):}\hspace*{-3em}
    & 0 \arrow{r}{\ISgrad} & \XSgrad{\Th}\arrow{r}{\SGRAD{h}} \arrow[d, bend left, "\Rgradh"] & \XSrot{\Th}\arrow{r}{\SROT{h}}\arrow[d, bend left, "\Rroth"]&\Poly{k}(\Th)\arrow{r}{} \arrow[d, bend left, "\lproj{0}{h}"] & 0\\
    \text{DDR(0):}\hspace*{-3em}
    & 0 \arrow{r}{\Igrad{0}{h}}  & \Xgrad{0}{h}\arrow{r}{\uGh{0}} \arrow[u, bend left, "\Egradh"] & \Xcurl{0}{h}\arrow[u, bend left, "\Eroth"]\arrow{r}{\uCh{0}}&\Poly{0}(\Th)\arrow{r}{} \arrow[u, bend left, "i"]& 0.
  \end{tikzcd}
\end{equation}

The main property that we will use is that both $\Rgradh$ and $\Egradh$ leave the vertex (non-gradient) degrees of freedom unchanged, which implies
\begin{equation}\label{eq:ER-Id.V}
  (\Egradh\Rgradh\dof{z}{h}-\dof{z}{h})_V=0\qquad\forall V\in\Vh,\quad\forall \dof{z}{h}\in \XSgrad{\Th}.
\end{equation}

\begin{proposition}[Poincaré inequality on $\Image(\Egradh\Rgradh-\Id)$]
  \label{prop:pc.grad.coho}
  For all $\dof{q}{h}\in\Image(\Egradh\Rgradh-\Id)$, it holds
  \begin{equation}
    \label{eq:local.poincare.on.XSgrad}
    \nSgrad{h}{\dof{q}{h}} \lesssim h\nSrot{h}{\SGRAD{h}\dof{q}{h}}.
  \end{equation}
\end{proposition}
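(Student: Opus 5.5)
The key observation is that, by \eqref{eq:ER-Id.V}, every $\dof{q}{h}\in\Image(\Egradh\Rgradh-\Id)$ has vanishing vertex values: $q_V=0$ for all $V\in\Vh$. The whole proof therefore amounts to a local Poincaré inequality on each element for vectors of $\XSgrad{T}$ with zero vertex values. Specifically, I will aim to prove, for all $T\in\Th$ and all $\dof{q}{T}\in\XSgrad{T}$ with $q_V=0$ for every $V\in\VT$,
\[
\nSgrad{T}{\dof{q}{T}}\lesssim h_T\,\nSrot{T}{\SGRAD{T}\dof{q}{T}}.
\]
Squaring, summing over $T\in\Th$ and using $h_T\le h$ then gives \eqref{eq:local.poincare.on.XSgrad}. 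Note that the image property is used only through the vertex values, so no global argument is needed.

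The local estimate is handled one group of degrees of freedom at a time.

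The gradient components are immediate. The vertex gradients $\Gqv$ and the normal derivatives $\Gqen$ appear directly as components of $\SGRAD{T}\dof{q}{T}$, by \eqref{eq:def.nablah}. Since the edge component $\Gqet\dof{q}{E}\tangent_E+\Gqen\normal_E$ is an orthogonal decomposition, $\norm{E}{\Gqen}$ is bounded by the norm of that edge component. Consequently $h_T^2|\Gqv|^2$ and $h_T^3\norm{E}{\Gqen}^2$ are bounded by $h_T^2$ times the corresponding terms of $\nSrot{T}{\SGRAD{T}\dof{q}{T}}^2$.

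The edge values come from the trace reconstruction. Consider $g\coloneq(\gammaS\dof{q}{T})_{|E}\in\Poly{k+1}(E)$. It vanishes at both endpoints of $E$, and by \eqref{eq:link.gammaS.Gqet} its tangential derivative is $\Gqet\dof{q}{E}$. A one-dimensional Poincaré inequality for functions vanishing at an endpoint gives $\norm{E}{g}\lesssim h_E\norm{E}{\Gqet\dof{q}{E}}$. Moreover $q_E=\lproj{k-1}{E}g$ by \eqref{eq:def.gammaS}, so $h_T\norm{E}{q_E}^2\lesssim h_T^3\norm{E}{\Gqet\dof{q}{E}}^2$, which is again bounded by $h_T^2$ times the edge contribution to $\nSrot{T}{\SGRAD{T}\dof{q}{T}}^2$. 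Here $h_E\simeq h_T$ by mesh regularity. This is the only place where the vanishing vertex values enter.

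The element value $q_T$ is the main obstacle. I will test \eqref{def:nablaT} with $\bvec w\in\Poly{k-1}(T)^2$ chosen so that $-\DIV\bvec w=q_T$; such a $\bvec w$ exists by the isomorphism $\DIV:\cRoly{k-1}(T)\to\Poly{k-2}(T)$, with $\norm{T}{\bvec w}\lesssim h_T\norm{T}{q_T}$ by scaling. This gives
\[
\norm{T}{q_T}^2=\int_T\nablaT\dof{q}{T}\cdot\bvec w-\sum_{E\in\ET}\omega_{TE}\int_E q_E\,(\bvec w\cdot\normal_E).
\]
The first term is bounded by $h_T\norm{T}{\nablaT\dof{q}{T}}\norm{T}{q_T}$. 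For the boundary term, a discrete trace inequality gives $\norm{E}{\bvec w}\lesssim h_T^{-1/2}\norm{T}{\bvec w}\lesssim h_T^{1/2}\norm{T}{q_T}$, so it is bounded by $h_T^{1/2}\norm{E}{q_E}\norm{T}{q_T}$. The edge bound from the previous step then controls it by $h_T\nSrot{T}{\SGRAD{T}\dof{q}{T}}\norm{T}{q_T}$. Dividing by $\norm{T}{q_T}$ completes the bound for $q_T$; when $k\le1$ there is no element unknown and this step is void.

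Collecting the bounds for the three groups of degrees of freedom yields the local estimate, and hence the proposition.
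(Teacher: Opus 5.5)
Your proposal is correct and follows the paper's proof essentially step by step: the local reduction, $q_V=0$ from \eqref{eq:ER-Id.V}, direct bounds on $\Gqv$ and $\Gqen$, and a test function $\bvec w\in\cRoly{k-1}(T)$ with prescribed divergence for $q_T$. The only variation is at the edges, where you bound $q_E$ through $(\gammaS\dof{q}{T})_{|E}$, which vanishes at both endpoints and has derivative $\Gqet\dof{q}{E}$, while the paper tests \eqref{eq:def.Gqet} with $r\in\Poly{k}_0(E)$ such that $\partial_{\tangent_E}r=q_E$ (the same integration by parts, used primally rather than by duality). One small slip: the vertex-gradient term in $\nSgrad{T}{{\cdot}}^2$ is $h_T^4|\Gqv|^2$, not $h_T^2|\Gqv|^2$, and this is what makes it equal to $h_T^2$ times the term $h_T^2|\Gqv|^2$ of $\nSrot{T}{\SGRAD{T}\dof{q}{T}}^2$.
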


\begin{proof}
  The bound \eqref{eq:local.poincare.on.XSgrad} trivially follows if we establish its local version:
  \begin{equation}
    \label{eq:local.poincare.on.XSgrad.T}
    \nSgrad{T}{\dof{q}{T}} \lesssim h_T\nSrot{T}{\SGRAD{T}\dof{q}{T}}\qquad\forall T\in\Th.
  \end{equation}
  Let $T\in\Th$ and let us bound with the right-hand side of \eqref{eq:local.poincare.on.XSgrad.T} each term appearing in the definition \eqref{eq:def.norm.gradT} of $\nSgrad{T}{\dof{q}{T}}$.
  \medskip\\
  i) \emph{Vertex components}. By \eqref{eq:ER-Id.V}, we have $q_V=0$ for all $V\in\Vh$. The bound on the gradient components at the vertices is a consequence of the definition of $\SGRAD{T}$:
  \begin{equation}
    \label{eq:pc:Q_q,v}
    h_T^4\sum_{V\in\VT}|\Gqv|^2
    \overset{\eqref{eq:def.nablah}}=
    h_T^2\sum_{V\in\VT}h_T^2|(\SGRAD{h}\dof{q}{h})_V|^2
    \overset{\eqref{eq:def.norm.rotT}}\le
    h_T^2\nSrot{T}{\SGRAD{T}\dof{q}{T}}^2.
  \end{equation}
  \medskip\\
  ii) \emph{Edge components}. For all $E\in\ET$ and all $r\in\Poly{k}(E)$, by the definition \eqref{eq:def.Gqet} of $\Gqet$ it holds
  \begin{equation}\label{eq:q.r.G}
    \int_E q_E\, \partial_{\tangent_E} r \overset{\text{ $q_V=0$}}= -\int_E (\Gqet\dof{q}{E})\,r
    \overset{\eqref{eq:def.nablah}}= -\int_E (\SGRAD{T}\dofT{q})_E\cdot\tangent_E\,r
  \end{equation}
  Take $r\in\Poly{k}_0(E)$ such that $\partial_{\tangent_E}  r =q_E$; a discrete local Poincaré inequality \cite[Remark 1.46]{Di-Pietro.Droniou:20} yields $\norm{E}{r}\lesssim h_E\norm{E}{q_E}$. Using a Cauchy--Schwarz inequality on the right-hand side of \eqref{eq:q.r.G}, simplifying, raising the inequality to the square, multiplying by $h_T$ and summing over $E\in\ET$ gives
  \begin{align}
    \label{eq:pc:q_E}
    \sum_{E\in\ET} h_T \norm{E}{q_E}^2 \lesssim \sum_{E\in\ET} h_T^3\norm{E}{(\SGRAD{T}\dofT{q})_E}^2 \overset{{\eqref{eq:def.norm.rotT}}}{\le}h_T^2\nSrot{T}{\SGRAD{T}\dof{q}{T}}^2.
  \end{align}
  For all $E\in\ET$, the bound on $\Gqen$ is a straightforward consequence of the definitions:
  \begin{align}
    \label{eq:pc:G_q,E}
    \sum_{E\in\ET} h_T^3 \norm{E}{\Gqen}^2 \overset{\eqref{eq:def.nablah}}= \sum_{E\in\ET} h_T^3 \norm{E}{(\SGRAD{T}\dofT{q})_E\cdot \normal_{E}}^2 \overset{\eqref{eq:def.norm.rotT}}{\le} h_T^2\nSrot{T}{\SGRAD{T}\dof{q}{T}}^2.
  \end{align}
  \\
  iii) \emph{Element components}. The definition \eqref{def:nablaT} of $\nablaT$ gives, for all $\bvec w\in\cRoly{k-1}(T)\subset \Poly{k-1}(T)^2$,
  \begin{equation}\label{eq:pc.ER.grad}
    \int_T q_T \DIV \bvec{w}
    = - \int_T \nablaT \dofT{q}\cdot \bvec{w}
    + \sum_{E\in\ET}\omega_{TE}\int_{E} q_E\,(\bvec{w}\cdot\normal_{E}).
  \end{equation}
  Since $\DIV : \cRoly{k-1}(T) \rightarrow \Poly{k-2}(T)$ is an isomorphism, we can take $\bvec w\in\cRoly{k-1}(T)$ such that $\DIV\bvec  w = q_T$ and $\norm{T}{\bvec{w}}\lesssim h_T\norm{T}{q_T}$ by \cite[Lemma 9]{Di-Pietro.Droniou:23}. We then plug this $\bvec w$ into \eqref{eq:pc.ER.grad}, use Cauchy--Schwarz inequalities together with a discrete trace inequality on $\bvec w$, simplify by $\norm{T}{q_T}$, and square to obtain
  \begin{align}
    \label{eq:pc:q_T}
    \norm{T}{q_T}^2 \lesssim  h_T^2\norm{T}{\nablaT \dofT{q}}^2 +  \sum_{E\in\ET}h_T\norm{E}{q_E}^2 \overset{{\eqref{eq:def.norm.rotT}},\eqref{eq:pc:q_E}}{\lesssim}h_T^2 \nSrot{T}{\SGRAD{T}\dof{q}{T}}^2.
  \end{align}
  Finally, summing \eqref{eq:pc:Q_q,v}, \eqref{eq:pc:q_E}, \eqref{eq:pc:G_q,E}, and \eqref{eq:pc:q_T}, then taking the square root of the resulting inequality, we obtain \eqref{eq:local.poincare.on.XSgrad.T}.
\end{proof}

\begin{proposition}[Poincaré inequality on $\Image(\Eroth\Rroth-\Id)$]
  \label{prop:pc.curl.coho}
  For all $\dof{\bvec{v}}{h}\in\XSrot{\Th}$, there exists $\dofh{\bvec{z}}\in\XSrot{\Th}$ such that
  \begin{equation}
    \label{eq:pc.curl.coho}
    ( \lproj{0}{h}-\Id)\SROT{h} \dofh{\bvec{v}} = \SROT{h} \dofh{\bvec{z}} \quad\text{ and }\quad\nSrot{h}{\dofh{\bvec{z}}}\lesssim  h\norm{\Omega}{\SROT{h}\dof{\bvec{z}}{h}}.
  \end{equation}
\end{proposition}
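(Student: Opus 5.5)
Given $\dofh{\bvec v}$, set $g\coloneq(\lproj{0}{h}-\Id)\SROT{h}\dofh{\bvec v}\in\Poly{k}(\Th)$. By construction $g$ has zero mean on every element $T\in\Th$. The plan is to build $\dofh{\bvec z}$ element by element, with only element unknowns. That is, for each $T$ we put $\bvec z_E=\bvec 0$ for all edges and $\bvec z_V=\bvec 0$ for all vertices, and we only choose $\bvec z_T\in\Poly{k}(T)^2$. Because no boundary unknowns are used, the local constructions on different elements do not interact, and the global estimate will follow by summing local estimates of the form $\nSrot{T}{\dofT{\bvec z}}\lesssim h_T\norm{T}{\SROT{T}\dofT{\bvec z}}$.

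With vanishing edge unknowns, the definition \eqref{eq:def.SROT} reduces to
\[
\int_T\SROT{T}\dofT{\bvec z}\,r=\int_T\bvec z_T\cdot\CURL r\qquad\forall r\in\Poly{k}(T).
\]
So $\SROT{T}\dofT{\bvec z}=g_{|T}$ is equivalent to $\int_T\bvec z_T\cdot\CURL r=\int_T g\,r$ for all $r\in\Poly{k}(T)$. The case $r$ constant is compatible because $g$ has zero mean on $T$. It therefore suffices to impose the condition for $r\in\Poly{0,k}(T)$.

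We look for $\bvec z_T$ in $\Roly{k-1}(T)=\CURL\Poly{k}(T)$, writing $\bvec z_T=\CURL\phi$ with $\phi\in\Poly{0,k}(T)$. The problem becomes: find $\phi\in\Poly{0,k}(T)$ such that
\[
(\CURL\phi,\CURL r)_T=(g,r)_T\qquad\forall r\in\Poly{0,k}(T).
\]
This is a discrete Neumann problem for the Laplacian on $\Poly{0,k}(T)$. It is well-posed because $\CURL$ is injective on zero-mean polynomials.

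To bound the solution, take $r=\phi$ and use the Poincaré–Wirtinger inequality on $T$, namely $\norm{T}{\phi}\lesssim h_T\norm{T}{\CURL\phi}$, which is uniform under mesh regularity. This gives $\norm{T}{\bvec z_T}\lesssim h_T\norm{T}{g}$. Since the boundary unknowns vanish, $\nSrot{T}{\dofT{\bvec z}}=\norm{T}{\bvec z_T}$. Combining with $g_{|T}=\SROT{T}\dofT{\bvec z}$ yields the local estimate $\nSrot{T}{\dofT{\bvec z}}\lesssim h_T\norm{T}{\SROT{T}\dofT{\bvec z}}$.

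Squaring, summing over $T\in\Th$, and using $h_T\le h$ gives \eqref{eq:pc.curl.coho}. The step that needs care is the uniformity of the Poincaré–Wirtinger constant on the polygonal elements, but this holds for the star-shaped and regular meshes assumed in the paper.
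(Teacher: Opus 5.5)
Your proof is correct and is essentially the paper's argument. It uses vanishing edge and vertex components, $\bvec z_T=\CURL\phi\in\Roly{k-1}(T)$ determined by testing against zero-mean $r\in\Poly{k}(T)$, and the bound obtained by testing with $r=\phi$ plus the local Poincar\'e inequality; your discrete Neumann formulation just makes existence of $\bvec z_T$ explicit, and one small slip is that the element unknowns of $\XSrot{\Th}$ lie in $\Poly{k-1}(T)^2$ rather than $\Poly{k}(T)^2$, which is harmless because $\Roly{k-1}(T)=\CURL\Poly{k}(T)\subset\Poly{k-1}(T)^2$.
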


\begin{proof}
  We define $\dofh{\bvec z}\in\XSrot{\Th}$ component by component. We set, for all $V\in\Vh$, $\bvec z_V=\bvec 0$ and, for all $E\in\Eh$, $\bvec{z}_E=\bvec 0$. For all $T\in\Th$, $\bvec{z}_T\in\Roly{k-1}(T)$ is selected such that
  \begin{equation}
    \label{eq:proof.pc.curl.const.z.T}
    \int_T \bvec{z}_T\cdot\CURL r = \int_T [(\lproj{0}{T}-\Id)\SROT{T} \dof{\bvec{v}}{T}] \,r \qquad \forall r\in\Poly{k}_0(T).
  \end{equation}
  This relation also holds for $r$ constant, by definition of $\lproj{0}{T}$. Since the edge components of $\dof{z}{h}$ vanish, combining \eqref{eq:proof.pc.curl.const.z.T} (for all $r\in\Poly{k}(T)$) with the definition \eqref{eq:def.SROT} of $\SROT{T}\dof{\bvec{z}}{T}$ shows that $\SROT{T}\dofT{\bvec z}=(\lproj{0}{T}-\Id)\SROT{T} \dof{\bvec{v}}{T}$.

  It remains to establish the estimate in \eqref{eq:pc.curl.coho}. Since the vertex and edge components of $\dofh{\bvec z}$ vanish, only the element components remain to be bounded.
  Recalling that $\bvec{z}_T\in\Roly{k-1}(T)$, we can take $r\in\Poly{k}_0(T)$ such that $\CURL r = \bvec{\bvec z}_T$ in \eqref{eq:proof.pc.curl.const.z.T}, and the local Poincaré inequality \cite[Remark 1.46]{Di-Pietro.Droniou:20} gives $\norm{T}{r}\lesssim h_T\norm{T}{\bvec{z}_T}$. Since $\SROT{T}\dof{\bvec z}{T}=(\lproj{0}{T}-\Id)\SROT{T} \dof{\bvec{v}}{T}$, a Cauchy--Schwarz inequality then yields $\norm{T}{\bvec{z}_T}^2 \lesssim h_T\norm{T}{\SROT{T}\dof{\bvec z}{T}}\norm{T}{\bvec{z}_T}$. Simplifying, squaring, summing over $T\in\Th$ and using $h_T\le h$ concludes the proof.
\end{proof}

\subsubsection{Proof of Theorems \ref{th:pc.grad} and \ref{th:pc.rot}}\label{sec:proof.poincares}

\begin{proof}[Proof of Theorem \ref{th:pc.grad}]
  The result is a direct consequence of Propositions \ref{prop:pc.from.reduced.spaces} in the appendix and \ref{prop:pc.grad.coho} along with the Poincaré inequality for the discrete gradient in the DDR(0) sequence, see \cite[Theorem 3]{Di-Pietro.Droniou:23} in the 3D case and \cite[Corollary 5]{Pietro.Droniou.ea:25} in any dimension. Indeed, Proposition \ref{prop:pc.grad.coho} implies Assumption \ref{ass:pc.reduced} on $\XSgrad{\Th}$: for any $\dofh{x}\in\XSgrad{\Th}$, simply set $\dofh{z} \coloneqq \Egradh\Rgradh\dofh{x}-\dofh{x}$ and apply Proposition \ref{prop:pc.grad.coho} with $\dofh{q}=\dofh{z}$. Furthermore, by boundedness of the $L^2$-orthogonal projectors, Cauchy--Schwarz inequalities and the local Poincaré inequalities of \cite[Lemma 9]{Di-Pietro.Droniou:23}, one can easily check from their definition in \cite[Section 5.2]{Di-Pietro.Droniou.ea:26} that the extension and reduction maps in \eqref{eq:doublecomplex_ddr0_stokes} are continuous uniformly in $h$, which ensures that the constant in the right-hand side of \eqref{eq:poincare.by.transfer} remains bounded uniformly in $h$.
\end{proof}

\begin{proof}[Proof of Theorem \ref{th:pc.rot}]
  The result is a direct consequence of Proposition \ref{prop:pc.from.reduced.spaces} in the appendix, together with Proposition \ref{prop:pc.curl.coho} and the Poincaré inequality for the discrete rotor in DDR(0) (2D version of \cite[Theorem 4]{Di-Pietro.Droniou:23}, see \cite[Corollary 5]{Pietro.Droniou.ea:25} for a proof in any dimension). Proposition \ref{prop:pc.curl.coho} implies Assumption \ref{ass:pc.reduced} thanks to the cochain maps property of the extension and reductions. The conclusion follows, as in the proof of Theorem \ref{th:pc.grad}, by noticing that the extension and reduction are continuous uniformly in $h$.
\end{proof}

\subsubsection{Poincaré inequalities}\label{sec:pc.dirichlet}

We briefly discuss here the case of Poincaré inequalities for the discrete version of the complex \eqref{eq:bgg.complex.dirichlet} with Dirichlet boundary conditions.
Define the spaces  $\XSgrado{\Th}$ and $\tXdRgrado{\Th}$ as the discrete counterparts of $H_{2,0}(\Omega)$ and $H_{1,0}(\Omega)^2$, respectively:
\begin{equation}
  \label{eq:def.XSgrado}
  \XSgrado{\Th}\coloneq \left\{
  \dofh{q}\in \XSgrad{\Th} \,:\,  \ q_E = G_{q,E}^n = 0 \quad \forall  E \in \Eh^{\rm b},\,%
  \text{$q_V=0$ and $\bvec{G}_{q,V}=\bvec{0}$} \quad \forall V \in \Vh^{\rm b} \right\},
\end{equation}
and
\begin{equation}\label{eq:def.txdrgrado}
  \tXdRgrado{\Th}\coloneq \left\{
  \dofh{\bvec v} \in \tXdRgrad{\Th}
  \,:\, \bvec{v}_E= \bvec 0 \quad \forall E \in\Eh^{\rm b},\,%
  \bvec{v}_V = \bvec 0 \quad \forall V \in \Vh^{\rm b}
  \right\}.
\end{equation}
where $\Eh^{\rm b} \subset \Eh$ and $\Vh^{\rm b} \subset \Vh$ respectively gather the edges and vertices contained in $\partial\Omega$.

\begin{theorem}[Poincaré inequalities on $\XSgrado{\Th}$ and $\tXdRgrado{\Th}$]
  \label{th:pc.adapted}
  The following Poincaré inequalities hold:
  \begin{alignat*}{2}
    \nSgrad{h}{\dof{q}{h}} &\lesssim \nSrot{h}{\SGRAD{h}\dof{q}{h}}\qquad
    &&\forall \dofh{q}\in\XSgrado{\Th},\\
    \nSrot{h}{\dof{\bvec{v}}{h}} &\lesssim \norm{\Omega}{\SROT{h}\dof{\bvec{v}}{h}} \qquad &&\forall \dof{\bvec{v}}{h}\in\tXdRgrado{\Th}\cap(\Ker\SROT{h})^{\perp}.
  \end{alignat*}
\end{theorem}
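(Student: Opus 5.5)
The plan is to reuse the transfer framework of Appendix \ref{appendix:abstract.poincare} (Proposition \ref{prop:pc.from.reduced.spaces}), now applied to the complexes with homogeneous boundary conditions. In the DS($k$) row we replace $\XSgrad{\Th}$ by $\XSgrado{\Th}$ and $\XSrot{\Th}$ by $\tXdRgrado{\Th}$. In the DDR($0$) row we use its version with vanishing boundary degrees of freedom, whose Poincaré inequalities for the gradient (with no orthogonality condition, since the kernel of the gradient with zero boundary values is trivial) and for the rotor on the orthogonal of its kernel are available from \cite[Corollary 5]{Pietro.Droniou.ea:25}.

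\textbf{Boundary-preserving cochain maps.} First we check that the extension and reduction maps of \eqref{eq:doublecomplex_ddr0_stokes}, defined in \cite[Section 5.2]{Di-Pietro.Droniou.ea:26}, preserve zero boundary data. They are built edge by edge and vertex by vertex from the corresponding components, so this should hold. They therefore restrict to cochain maps between the boundary-condition complexes, and they remain uniformly continuous by the same argument as in the proof of Theorem \ref{th:pc.grad}.

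\textbf{Gradient inequality.} For the first inequality we verify Assumption \ref{ass:pc.reduced} with $\dofh{z}=\Egradh\Rgradh\dofh{q}-\dofh{q}$. Proposition \ref{prop:pc.grad.coho} is local and depends only on \eqref{eq:ER-Id.V}, so it applies unchanged. Proposition \ref{prop:pc.from.reduced.spaces} then gives $\nSgrad{h}{\dofh{q}}\lesssim\nSrot{h}{\SGRAD{h}\dofh{q}}$ for $\dofh{q}$ orthogonal to $\Ker\SGRAD{h}\cap\XSgrado{\Th}$. We then show that this kernel is trivial. If $\SGRAD{h}\dofh{q}=0$, then $\Rgradh\dofh{q}$ lies in the kernel of the DDR($0$) gradient with zero boundary values, hence vanishes. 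Consequently $\dofh{q}=-\dofh{z}$, and Proposition \ref{prop:pc.grad.coho} yields $\nSgrad{h}{\dofh{q}}\lesssim h\cdot 0=0$. Since the kernel is trivial, no orthogonality condition is needed and the inequality holds on all of $\XSgrado{\Th}$.

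\textbf{Rotor inequality.} For the second inequality we reuse the construction of Proposition \ref{prop:pc.curl.coho}. The element-only $\dofh{\bvec{z}}$ built there has zero edge and vertex components, so it belongs to $\tXdRgrado{\Th}$ and Assumption \ref{ass:pc.reduced} holds in the boundary setting. One point needs care: with zero boundary values, the image of $\SROT{h}$ consists of zero-mean functions over $\Omega$. The target space of the DDR($0$) rotor then becomes $\Poly{0}(\Th)$ with zero global mean, and the corresponding DDR($0$) Poincaré inequality is again the boundary version of \cite[Corollary 5]{Pietro.Droniou.ea:25}.

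\textbf{Expected obstacle.} The main difficulty is confirming that the reduction $\Rgradh$, and particularly $\Rroth$, preserve vanishing boundary components. The reduction of an edge value could in principle involve element data, which would break this property. If that happens, we would first redefine the boundary degrees of freedom of the reduction to be zero and check that the cochain property still holds on the boundary subcomplex.
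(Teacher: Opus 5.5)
Your proposal is correct and follows the paper's route. Like the paper, you apply the transfer framework of Proposition~\ref{prop:pc.from.reduced.spaces} to the boundary-condition versions of diagram~\eqref{eq:doublecomplex_ddr0_stokes}, reuse Propositions~\ref{prop:pc.grad.coho} and~\ref{prop:pc.curl.coho} unchanged, and take the boundary-preservation of the extension and reduction maps as given (the paper likewise only states that this is easily checked, and it cites \cite[Lemmas 7 and 8]{Di-Pietro:24} for the DDR($0$) Poincaré inequalities with homogeneous boundary conditions). The one real difference is how you show that $\SGRAD{h}$ is injective on $\XSgrado{\Th}$: the paper cites the characterisation of $\Ker\SGRAD{h}$ in \cite[Theorem 10]{Di-Pietro.Droniou.ea:26}, whereas you argue directly that $\Rgradh\dofh{q}=0$, so $\dofh{q}=-(\Egradh\Rgradh-\Id)\dofh{q}$, and then conclude with Proposition~\ref{prop:pc.grad.coho}. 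That argument is self-contained and valid.
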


\begin{proof}
  We again apply the abstract framework of Section~\ref{appendix:abstract.poincare}, but this time with the first two spaces in the first row of the diagram \eqref{eq:doublecomplex_ddr0_stokes} replaced by $\XSgrado{\Th}$ and $\tXdRgrado{\Th}$ (top row), and the first two spaces in the second row by the DDR(0) spaces with zero boundary conditions defined as follows:
  \[
  \begin{aligned}
    \underline{H}_{1,0}^{0}(\Th) &\coloneq \left\{(q_V)_{V\in\Vh}\,:\, q_V\in\mathbb{R}\quad \forall V\in \Vh \setminus \Vh^{\rm b},\; q_V = 0 \quad \forall V\in\Vh^b \right\}, \\
    \underline{\bvec{H}}_{\ROT,0}^0(\Th) &\coloneq \left\{(v_E)_{E\in\Eh}\,:\, v_E\in\mathbb{R}\quad \forall E\in \Eh \setminus \Eh^{\rm b},\; v_E = 0 \quad \forall E\in\Eh^b \right\}.
  \end{aligned}
  \]
  It is easily checked that the extension and reduction operators still correctly map these spaces, and that Propositions \ref{prop:pc.grad.coho} and \ref{prop:pc.curl.coho} hold with these boundary conditions.
  Since Poincaré inequalities have been established for the DDR(0) complex with homogeneous boundary conditions (see \cite[Lemmas 7 and 8]{Di-Pietro:24}), the abstract framework of Appendix \ref{appendix:abstract.poincare} applies and yields the Poincaré inequalities stated in the theorem, after noticing that $\Ker((\SGRAD{h})_{|\XSgrado{\Th}})=\{0\}$ (since $\Ker(\SGRAD{h})$ is made of interpolates of piecewise constant functions on the connected components of $\Omega$ by \cite[Theorem 10]{Di-Pietro.Droniou.ea:26}).
\end{proof}

\section{Application to the Kirchhoff--Love plate model}\label{sec:kirchhoff}

\subsection{Discrete problem}

To design a numerical scheme for problem \eqref{eq:KL:weak:essential}, we consider the Hodge Laplacian associated with the discrete Hessian complex \eqref{derived-BGG}. Boundary conditions are incorporated by replacing $\XSgrad{\Th}$ with $\XSgrado{\Th}$, defined in \eqref{eq:def.XSgrado}, and $\XdRrotS{\Th}$ by $\XdRrotSo{\Th}$ defined as follows:
\[
\XdRrotSo{\Th} \coloneq  \left\{ \dofh{\bvec \xi} \in \XdRrotS{\Th}\,:\, \bvec\xi_{E} = \bvec 0 \quad \forall E \in\Eh^{\rm b}\right\}.
\]
We obtain the discrete problem:
Find $\dofh{u}\in\XSgrado{\Th}$ such that
\begin{equation}\label{eq:discrete.kl}
  (\Hess{h} \dofh{u}, \Hess{h} \dofh{q} )_{\VROT,h} = ( f, \SPgradh\dofh{q} )_{L^2(\Omega)}\qquad \forall \dofh{q}\in\XSgrado{\Th}.
\end{equation}
The discrete scalar product $\lparen \cdot ,\cdot \rparen_{\VROT,h}$ is the tensorized version of the one defined on $\XdRrot{\Th}$ by \cite[Eq.~(4.15)]{Di-Pietro.Droniou:23}:
For all $(\uvec{\upsilon}_T, \uvec{\tau}_T) \in \tXdRrot{T} \times \tXdRrot{T}$,
\[
\begin{gathered}
  (\uvec{\upsilon}_T, \uvec{\tau}_T)_{\VROT,T}
  \coloneqq
  \int_T \Prot{k+1}{T} \uvec{\upsilon}_T : \Prot{k+1}{T} \uvec{\tau}_T
  + s_{\VROT,T}(\uvec{\upsilon}_T, \uvec{\tau}_T),
  \\
  s_{\VROT,T}(\uvec{\upsilon}_T, \uvec{\tau}_T)
  \coloneqq
  \sum_{E\in\ET} h_E \int_E \left(
  (\Prot{k+1}{T} \uvec{\upsilon}_T) \tangent_E - \bvec{\upsilon}_E
  \right) \cdot \left(
  (\Prot{k+1}{T} \uvec{\tau}_T) \tangent_E - \bvec{\tau}_E
  \right),
\end{gathered}
\]
where $\Prot{k+1}{T} : \XdRrot{T} \to \Poly{k}(T)^{d \times d}$  is the local potential reconstruction on $\XdRrot{T}$.

\subsection{Analytical properties of the discrete Hessian complex}

\begin{theorem}[Poincaré inequalities on $\XSgrado{\Th}$]
  \label{th:pc.hess}
  The following discrete Poincaré inequalities hold:
  For all $\dof{q}{h}\in\XSgrado{\Th}$,
  \[
  \norm{2,h}{\dof{q}{h}} \lesssim \norm{\VROT,h}{\Hess{h}\dof{q}{h}},\qquad
  \norm{1,h}{\SGRAD{h}\dofh{q}}\lesssim\norm{\VROT,h}{\Hess{h}\dofh{q}}.
  \]
\end{theorem}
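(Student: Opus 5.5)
The plan is to chain two Poincaré inequalities: the one for $\SGRAD{h}$ on $\XSgrado{\Th}$ (Theorem~\ref{th:pc.adapted}), and one for the tensorised DDR gradient $\tGRAD{h}$ on $\tXdRgrado{\Th}$. Since $\Hess{h}=\tGRAD{h}\circ\SGRAD{h}$, set $\dofh{\bvec v}\coloneq\SGRAD{h}\dofh{q}$ for $\dofh{q}\in\XSgrado{\Th}$. The first step is to check that $\dofh{\bvec v}\in\tXdRgrado{\Th}$. By \eqref{eq:def.nablah}, the boundary edge components of $\dofh{\bvec v}$ are $\Gqet\dof{q}{E}\tangent_E+\Gqen\normal_E$, with $\Gqen=0$ on $E\in\Eh^{\rm b}$. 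Also, $\Gqet\dof{q}{E}=0$ by \eqref{eq:def.Gqet}, because $q_E=0$ and $q_V=0$ at the endpoints of boundary edges, which are boundary vertices. The boundary vertex components are $\Gqv=\bvec 0$. So $\SGRAD{h}$ maps $\XSgrado{\Th}$ into $\tXdRgrado{\Th}$.

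The second step is the Poincaré inequality $\nSrot{h}{\dofh{\bvec v}}\lesssim\norm{\VROT,h}{\tGRAD{h}\dofh{\bvec v}}$ for all $\dofh{\bvec v}\in\tXdRgrado{\Th}$. This is the tensorised version of the Poincaré inequality for the gradient of the (serendipity) DDR($k+1$) complex with homogeneous boundary conditions. Component-wise, $\tXdRgrado{\Th}$ is two copies of $\XdRgrado{\Th}$, and $\tGRAD{h}$ acts row by row. The kernel of the gradient on $\XdRgrado{\Th}$ is trivial, since it consists of interpolates of constants vanishing on $\partial\Omega$. The inequality therefore follows from the homogeneous-boundary DDR Poincaré inequality, which can be obtained either from \cite{Di-Pietro:24} or by the same transfer argument as in Theorem~\ref{th:pc.adapted}, via the serendipity reduction/extension to DDR(0) of \cite{Di-Pietro.Droniou:23*1} and Appendix~\ref{appendix:abstract.poincare}. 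I also need the equivalence $\norm{\VROT,h}{\cdot}\simeq$ the component $L^2$-like norm on $\tXdRrot{\Th}$, together with $\norm{\ROT,h}{\cdot}\simeq\nSrot{h}{\cdot}$ and $\norm{\stokes,h}{\cdot}\simeq\nSgrad{h}{\cdot}$ (the norm equivalence lemma and \cite[Lemma 5]{Di-Pietro.Droniou:23}). Note also that $\norm{1,h}{\cdot}$, the tensorised DDR product norm, is equivalent to $\nSrot{h}{\cdot}$.

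Combining the two steps gives, for $\dofh{q}\in\XSgrado{\Th}$,
\[
\norm{1,h}{\SGRAD{h}\dofh{q}}\simeq\nSrot{h}{\SGRAD{h}\dofh{q}}\lesssim\norm{\VROT,h}{\tGRAD{h}\SGRAD{h}\dofh{q}}=\norm{\VROT,h}{\Hess{h}\dofh{q}},
\]
which is the second inequality. Theorem~\ref{th:pc.adapted} then gives $\norm{2,h}{\dofh{q}}\simeq\nSgrad{h}{\dofh{q}}\lesssim\nSrot{h}{\SGRAD{h}\dofh{q}}$, and chaining yields the first inequality.

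The main obstacle is the second step. Theorem~\ref{th:pc.adapted} gives a Poincaré inequality only for $\SROT{h}$ on $\tXdRgrado{\Th}$, not for $\tGRAD{h}$, so I must import the DDR gradient Poincaré inequality with boundary conditions for the serendipity variant of degree $k+1$. If it is not directly available, I would run the appendix framework again: use the serendipity extension/reduction between DDR($k+1$) and its full version, and then reduce to DDR(0). For this I would verify a local Poincaré estimate on $\Image(\mathfrak E\mathfrak R-\Id)$, as in Proposition~\ref{prop:pc.grad.coho}, and uniform boundedness of the cochain maps.
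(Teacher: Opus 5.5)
Your proposal is correct and follows essentially the same route as the paper. You show that $\SGRAD{h}$ maps $\XSgrado{\Th}$ into $\tXdRgrado{\Th}$ (since $\Gqet\dof{q}{E}=0$ on boundary edges), then chain Theorem~\ref{th:pc.adapted} with a Poincaré inequality for $\tGRAD{h}$ on $\tXdRgrado{\Th}$; the paper obtains the latter by adapting \cite[Theorem 3]{Di-Pietro.Droniou:23}, and your membership check and sourcing of this inequality are just more detailed versions of the paper's one-line justifications.
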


\begin{remark}[$\XSgrad{\Th}$]\label{eq:norm.VROT.h.Hess}
  The above Poincaré inequalities show that $\norm{\VROT,h}{\Hess{h}\cdot}$ is a norm on $\XSgrad{\Th}$.
\end{remark}

\begin{proof}
  Notice first that the proof of the Poincaré--Wirtinger inequality for $\tGRAD{h}$ stated in \cite[Theorem 3]{Di-Pietro.Droniou:23} can be easily adapted into a Poincaré inequality on $\tXdRgrado{\Th}$ (see \eqref{eq:def.txdrgrado}).

  Moreover, for all $\dofh{q}\in\XSgrado{\Th}$ and all $E \in \Eh^{\rm b}$, since all the polynomials in $\dof{q}{E}$ vanish, we have $\Gqet \dof{q}{E}=0$. By the definition \eqref{eq:def.nablah} of $\SGRAD{h}$, it follows that $\SGRAD{h}\dofh{q} \in \tXdRgrado{\Th}$. Therefore, for all $\dofh{q}\in\XSgrado{\Th}$, the Poincaré inequalities on $\XSgrado{\Th}$ and $\tXdRgrado{\Th}$ yield
  \[
  \norm{2,h}{\dofh{q}}
  \lesssim
  \norm{1,h}{\SGRAD{h}\dofh{q}}
  \lesssim
  \norm{\VROT,h}{\tGRAD{h}\SGRAD{h}\dofh{q}}
  =
  \norm{\VROT,h}{\Hess{h}\dofh{q}}.\qedhere
  \]
\end{proof}

In order to derive an error estimate for the scheme \eqref{eq:discrete.kl}, we first state an adjoint consistency result.
Hereafter, we set
\[
\bvec{H}_{\DIV\VDIV,0}(\Omega)\coloneq
\begin{aligned}[t]
  \Big\{{}&\bvec{\xi}\in L^2(\Omega)^{2\times2}\,:\,
  \VDIV\bvec{\xi}\in L^2(\Omega)^2\,,\;\DIV\VDIV\bvec{\xi}\in L^2(\Omega)\,,\\
  &\bvec{\xi}\normal_{\Omega}=\bvec{0}\text{ and }(\VDIV\bvec{\xi})\cdot\normal_\Omega=0\text{ on $\partial\Omega$}\Big\}
\end{aligned}
\]

\begin{theorem}[Adjoint consistency of $\Hess{h}$]
  For all $\bvec \xi \in \Cspace{0}(\overline{\Omega})^{2\times 2}\cap H_{\DIV\VDIV,0}(\Omega)$ such that $\bvec \xi \in H_{k+2}(\Th)^{2 \times 2} $ and $\VDIV\bvec \xi\in H_{k+3}(\Th)^2$, and all $\dofh{q}\in\XSgrad{\Th}$, it holds
  \begin{multline}\label{eq:adj.const.hess}
    \Bigg|\sum_{T\in\Th}
    (\IdRrotT \bvec \xi, \Hess{T}\dofT{q})_{\VROT,T} - \int_T \DIV\VDIV\bvec\xi \, \SPgrad\dofT{q}
    \Bigg|
    \lesssim  h^{k+2} \seminorm{H_{k+2}(\Th)^{2\times 2}}{\bvec \xi}  \norm{\VROT,h}{\Hess{h}\dofh{q}}
    \\
    + \left(
    h^{k+3} \seminorm{H_{k+3}(\Th)^2}{\VDIV \bvec \xi}
    + h^{k+1}  \seminorm{H_{(k+1,2)}(\Th)^2}{\VDIV\bvec\xi}
    \right) \norm{1,h}{\SGRAD{h}\dofh{q}}.
  \end{multline}
\end{theorem}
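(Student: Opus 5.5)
The natural route is to split the Hessian adjoint consistency error into two pieces. One is an adjoint consistency error for the tensorised DDR($k+1$) gradient $\tGRAD{h}$ tested against $\SGRAD{h}\dofh{q}$; the other is the adjoint consistency error $\acSgrad$ of the Stokes gradient from Theorem~\ref{th:const.grad}, applied with $\bvec v=\VDIV\bvec\xi$. Writing $\Hess{T}=\tGRAD{T}\SGRAD{T}$ and setting $\dofh{\bvec v}\coloneq\SGRAD{h}\dofh{q}\in\tXdRgrad{\Th}$, I insert and subtract $\sum_T(\IdRgradT(\VDIV\bvec\xi),\SGRAD{T}\dofT{q})_{\ROT,T}$. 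This gives
\begin{multline*}
\sum_{T\in\Th}\Big[(\IdRrotT\bvec\xi,\Hess{T}\dofT{q})_{\VROT,T}-\int_T\DIV\VDIV\bvec\xi\,\SPgrad\dofT{q}\Big]
= \sum_{T\in\Th}\Big[(\IdRrotT\bvec\xi,\tGRAD{T}\dofT{\bvec v})_{\VROT,T}+(\IdRgradT(\VDIV\bvec\xi),\dofT{\bvec v})_{\ROT,T}\Big]\\
-\acSgrad(\VDIV\bvec\xi,\dofh{q}).
\end{multline*}
Here I used $\DIV\VDIV\bvec\xi=\DIV(\VDIV\bvec\xi)$. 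The boundary condition $(\VDIV\bvec\xi)\cdot\normal_\Omega=0$ gives $\VDIV\bvec\xi\in\bvec{\mathcal D}$, although continuity of $\VDIV\bvec\xi$ should be checked; the $H_{k+3}(\Th)$ regularity and a Sobolev embedding should give it piecewise, and I would assume it is globally continuous. Then \eqref{eq:adjoint.const.Sgrad} bounds the last term by $h^{k+1}\seminorm{H_{(k+1,2)}(\Th)^2}{\VDIV\bvec\xi}\norm{\ROT,h}{\dofh{\bvec v}}$, and the norm equivalence turns $\norm{\ROT,h}{\cdot}$ into $\norm{1,h}{\cdot}$. This produces the $h^{k+1}$ term in \eqref{eq:adj.const.hess}.

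For the first sum, I recall that the tensorised serendipity DDR($k+1$) complex has an adjoint consistency estimate for its gradient (the tensorised version of \cite[Theorem 10]{Di-Pietro.Droniou:23}, carried to serendipity via \cite{Di-Pietro.Droniou:23*1}). That estimate is for the error
\[
\sum_T\Big[(\IdRrotT\bvec\xi,\tGRAD{T}\dofT{\bvec v})_{\VROT,T}+\int_T\VDIV\bvec\xi\cdot\Pgrad{k+2}{T}\dofT{\bvec v}\Big],
\]
valid because $\bvec\xi\normal_\Omega=\bvec 0$, and it is bounded by $h^{k+2}\seminorm{H_{k+2}(\Th)^{2\times2}}{\bvec\xi}\,\norm{\VROT,h}{\tGRAD{h}\dofh{\bvec v}}$. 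Since $\tGRAD{h}\dofh{\bvec v}=\Hess{h}\dofh{q}$, this yields the first term of \eqref{eq:adj.const.hess}. The symmetry of $\bvec\xi$ is not needed here.

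What remains is the mismatch between $-\int_T\VDIV\bvec\xi\cdot\Pgrad{k+2}{T}\dofT{\bvec v}$ and $(\IdRgradT(\VDIV\bvec\xi),\dofT{\bvec v})_{\ROT,T}$. This is where the two different potentials on $\XSrot{T}$ meet, and I expect it to be the main obstacle. I would split it as
\[
\Big[(\IdRgradT\bvec w,\dofT{\bvec v})_{\ROT,T}-\int_T\bvec w\cdot\SProt\dofT{\bvec v}\Big]+\int_T\bvec w\cdot(\SProt-\Pgrad{k+2}{T})\dofT{\bvec v},\qquad \bvec w\coloneq\VDIV\bvec\xi.
\]
The first bracket is controlled by the consistency of $(\cdot,\cdot)_{\ROT,T}$ (Theorem on $\XSrot{\Th}$, item 3), at order $h^{k+1}\seminorm{H_{(k+1,2)}}{\bvec w}$.

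For the second term, I would insert $\bvec w-\vlproj{k}{T}\bvec w$. Testing the difference against $\vlproj{k}{T}\bvec w$ should cancel exactly or be small, since both potentials reproduce interpolates of $\Poly{k}$. My first attempt would be to show $\int_T(\SProt-\Pgrad{k+2}{T})\dofT{\bvec v}\cdot\bvec z$ is controlled by the stabilisations for $\bvec z\in\Poly{k}(T)^2$. Failing that, I would use the projection $\vlproj{k+2}{T}$ together with the polynomial consistency of $\Pgrad{k+2}{T}$ and a Poincaré-type argument, yielding $h^{k+3}\seminorm{H_{k+3}(T)^2}{\bvec w}\nSrot{T}{\dofT{\bvec v}}$. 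That is presumably the origin of the $h^{k+3}$ term.

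Finally, I would sum over $T$, apply discrete Cauchy--Schwarz, and use the norm equivalence $\norm{\ROT,T}{\cdot}\simeq\nSrot{T}{\cdot}$ to collect all contributions in terms of $\norm{1,h}{\SGRAD{h}\dofh{q}}$.
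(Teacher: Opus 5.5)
\paragraph{The gap.} It is the cross term you flag yourself, $\int_T\bvec w\cdot(\SProt-\Pgrad{k+2}{T})\dofT{\bvec v}$ with $\bvec w=\VDIV\bvec\xi$ and $\dofT{\bvec v}=\SGRAD{T}\dofT{q}$. Neither of your remedies closes it.

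\emph{Why your remedies fail.} That both potentials reproduce $\IdRgradT\bvec z$ for $\bvec z\in\Poly{k}(T)^2$ is a primal property. It gives no orthogonality of $(\SProt-\Pgrad{k+2}{T})\dofT{\bvec v}$ to $\Poly{k}(T)^2$ for a general $\dofT{\bvec v}$. What does hold is that both potentials have $\vlproj{k-1}{T}$-projection equal to $\bvec v_T$. This follows from \eqref{eq:def.SROT}, \eqref{eq:def.potSrot} and the constraint in \eqref{eq:min.Jv}, together with the serendipity DDR construction for $\Pgrad{k+2}{T}$. So you only get orthogonality to $\Poly{k-1}(T)^2$, which is one order short. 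Inserting $\vlproj{k+2}{T}\bvec w$ instead just leaves $\int_T\vlproj{k+2}{T}\bvec w\cdot(\SProt-\Pgrad{k+2}{T})\dofT{\bvec v}$, which is not small.

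\emph{The loss is genuine.} $\SProt\SGRAD{T}\dofT{q}$ and $\SPgrad\dofT{q}$ depend only on $q_T,q_E,q_V$, whereas $\Pgrad{k+2}{T}\SGRAD{T}\dofT{q}$ also sees $\Gqen$ and $\Gqv$. Take $k=0$ on a uniform Cartesian mesh of $(0,1)^2$. Let $\dofh{q}$ vanish except for $\Gqen=\phi\,(1,0)^\top\cdot\normal_E$ on vertical edges, with $\phi=\phi(x_1)$ smooth. Let $\bar p$ be the continuous function of $x_2$ that is quadratic on each cell, vanishes on horizontal mesh lines and has unit cell means. Then:
\begin{itemize}
\item $\SGRAD{h}\dofh{q}=\IdRgrad((\phi\bar p,0)^\top)$;
\item $\SProt\SGRAD{T}\dofT{q}=0$;
\item $\Pgrad{2}{T}\SGRAD{T}\dofT{q}=\phi(\bvec x_T)(\bar p,0)^\top+O(h)$.
\end{itemize}
Your cross term therefore sums to $-\int_\Omega w_1\phi+O(h)$. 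This is of the size of $\norm{1,h}{\SGRAD{h}\dofh{q}}$, with no power of $h$.

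\emph{What your route does give.} Combining the $\Poly{k-1}$-orthogonality with a local Poincar\'e inequality for $\tGRAD{T}$ (the difference vanishes on $\Ker\tGRAD{T}=\IdRgradT\Poly{0}(T)^2$) yields the bound $h_T^{k+1}\seminorm{H_k(T)^2}{\bvec w}\norm{\VROT,T}{\Hess{T}\dofT{q}}$. That is not a term of \eqref{eq:adj.const.hess}. Note also that the $h^{k+3}$ term in the statement does not come from this cross term. It is the consistency error of the DDR product $(\cdot,\cdot)_{1,T}$, whose potential $\Pgrad{k+2}{T}$ is exact on $\Poly{k+2}(T)^2$.

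\paragraph{Comparison with the paper.} The paper never compares $\SProt$ and $\Pgrad{k+2}{T}$ explicitly. It adds and subtracts $\int_T\VDIV\bvec\xi\cdot\Pgrad{k+2}{T}\SGRAD{T}\dofT{q}+(\IdRgradT\VDIV\bvec\xi,\SGRAD{T}\dofT{q})_{1,T}$, i.e.\ it uses $(\cdot,\cdot)_{1,T}$ rather than $(\cdot,\cdot)_{\ROT,T}$ as the intermediate product. The three resulting sums are bounded by:
\begin{itemize}
\item the DDR adjoint consistency of $\tGRAD{h}$, as you do;
\item \cite[Eq.~(6.11)]{Di-Pietro.Droniou:23}, which produces the $h^{k+3}$ term;
\item the $(\cdot,\cdot)_{1,T}$-variant of \eqref{eq:adjoint.const.Sgrad} asserted in the remark following Theorem~\ref{th:const.grad}.
\end{itemize}
Your obstacle is thereby moved into that remark, and the example above applies to it as well. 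There $\SPgrad\dofT{q}=0$, while for any smooth $\bvec w\in\bvec{\mathcal{D}}$ one has $\sum_{T}(\IdRgradT\bvec w,\SGRAD{T}\dofT{q})_{1,T}=\int_\Omega w_1\phi+O(h)$. This is not $O(h)\norm{1,h}{\SGRAD{h}\dofh{q}}$.

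Now take $\bvec\xi=\hess\psi$ with $\psi$ smooth and compactly supported in $\Omega$. The left-hand side of \eqref{eq:adj.const.hess} tends to $-\int_\Omega\partial_1\Delta\psi\,\phi$, which is nonzero for instance when $\phi=x_1^3$ and $0\le\psi\not\equiv0$. Its right-hand side, by contrast, is $O(h)$, since $\norm{\VROT,h}{\Hess{h}\dofh{q}}\lesssim h^{-1}$ and $\norm{1,h}{\SGRAD{h}\dofh{q}}\lesssim1$.

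So the cross term cannot be closed in the stated form by either route. What your decomposition does prove is the estimate with the additional term $h^{k+1}\seminorm{H_k(\Th)^2}{\VDIV\bvec\xi}\norm{\VROT,h}{\Hess{h}\dofh{q}}$.
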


\begin{proof}
  For all  $\dofh{q}\in\XSgrad{\Th}$, we recall that $\Hess{T}\coloneq\tGRAD{T}\circ \SGRAD{T}$,
  add and remove $\int_T \VDIV\xi \cdot \Pgrad{k+2}{T}\SGRAD{T} \dofT{q} + (\IdRgradT \VDIV \bvec \xi, \SGRAD{T}\dofT{q})_{1,T}$ to write
  \[
  \begin{aligned}
    &\sum_{T\in\Th}  (\IdRrotT \bvec \xi, \Hess{T}\dofT{q})_{\VROT,T} - \int_T \DIV\VDIV\bvec\xi \, \SPgrad\dofT{q} 
    \\
    &= \sum_{T\in\Th} 
    (\IdRrot \bvec \xi, \tGRAD{T}(\SGRAD{T}\dofT{q}))_{\VROT,T} + \int_T \VDIV\xi \cdot \Pgrad{k+2}{T}\SGRAD{T} \dofT{q}
    \\
    &\quad+ \sum_{T\in\Th} 
    -\int_T \VDIV\xi \cdot \Pgrad{k+2}{T}\SGRAD{T} \dofT{q} + (\IdRgradT\VDIV \xi, \SGRAD{T}\dofT{q})_{1,T}
    \\
    &\quad + \sum_{T\in\Th} 
    -  (\IdRgradT \VDIV \bvec \xi, \SGRAD{T}\dofT{q})_{1,T} - \int_T \DIV\VDIV\bvec\xi \, \SPgrad\dofT{q}.
  \end{aligned}
  \]
  The conclusion follows using the adjoint consistency \cite[Theorem 9]{Di-Pietro.Droniou:23} of $\tGRAD{h}$ to estimate the first summation,
  the consistency \cite[Eq.~(6.11)]{Di-Pietro.Droniou:23} of the scalar product $(\cdot,\cdot)_{1,T}$ to estimate the second summation,
  and the adjoint consistency \eqref{eq:adjoint.const.Sgrad} of $\SGRAD{h}$ to estimate the last summation.
\end{proof}

\begin{theorem}[Error estimate] \label{th:error.estimate}
  Let $u\in H_{2,0}(\Omega)$ be the solution to problem \eqref{eq:KL:weak:essential}, and let $\dofh{u}\in\XSgrado{\Th}$ denote the solution to problem \eqref{eq:discrete.kl}.  Assume the additional regularity $u\in \Cspace{0}(\overline{\Omega})\cap H_{k+4}(\Th)$ and $\Delta u \in H_{k+4}(\Th)$. Then, the following estimate holds:
  \begin{equation*}
    \norm{\VROT,h}{\Hess{h}(\dofh{u} - \ISgrad u)} \lesssim (h^{k+2} \seminorm{H_{k+4}(\Th)}{u}+h^{k+3} \seminorm{H_{k+4}(\Th)}{\Delta u} + h^{k+1}   \seminorm{H_{(k+1,2)}(\Th)^2}{\GRAD \Delta u}).
  \end{equation*}
\end{theorem}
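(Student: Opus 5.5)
We follow the standard third-Strang-type argument for fully discrete schemes, based on the coercivity provided by Theorem \ref{th:pc.hess} and on the adjoint consistency \eqref{eq:adj.const.hess}. Set $\dofh{e}\coloneq\dofh{u}-\ISgrad u\in\XSgrado{\Th}$. Since $u\in H_{2,0}(\Omega)$, $u=\nabla u\cdot\normal_\Omega=0$ on $\partial\Omega$, hence also $\GRAD u=\bvec0$ on $\partial\Omega$. The interpolate therefore has vanishing boundary components, so $\ISgrad u\in\XSgrado{\Th}$ and $\dofh{e}$ is an admissible test function. By definition of the norm,
\[
\norm{\VROT,h}{\Hess{h}\dofh{e}}^2=(\Hess{h}\dofh{u},\Hess{h}\dofh{e})_{\VROT,h}-(\Hess{h}\ISgrad u,\Hess{h}\dofh{e})_{\VROT,h}.
\]
In the first term we use the scheme \eqref{eq:discrete.kl} with $\dofh{q}=\dofh{e}$, and the strong form $f=\DIV\VDIV\hess u$. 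In the second term we use the commutation property \eqref{eq:com.prop.hess}, elementwise, to replace $\Hess{T}\ISgradT u$ by $\IdRrotT\hess u$. This gives
\[
\norm{\VROT,h}{\Hess{h}\dofh{e}}^2=-\sum_{T\in\Th}\left[(\IdRrotT\bvec\xi,\Hess{T}\dofT{e})_{\VROT,T}-\int_T\DIV\VDIV\bvec\xi\,\SPgrad\dofT{e}\right],\qquad \bvec\xi\coloneq\hess u.
\]
The bracket is exactly the adjoint consistency error controlled by \eqref{eq:adj.const.hess}.

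Next we check that $\bvec\xi=\hess u$ meets the hypotheses of the adjoint consistency theorem. The required regularity is immediate:
\begin{itemize}
\item $\bvec\xi\in H_{k+2}(\Th)^{2\times2}$ with $\seminorm{H_{k+2}(\Th)}{\bvec\xi}\lesssim\seminorm{H_{k+4}(\Th)}{u}$;
\item $\VDIV\hess u=\GRAD\Delta u$, so $\seminorm{H_{k+3}(\Th)^2}{\VDIV\bvec\xi}\le\seminorm{H_{k+4}(\Th)}{\Delta u}$, and the $H_{(k+1,2)}$ term becomes $\seminorm{H_{(k+1,2)}(\Th)^2}{\GRAD\Delta u}$.
\end{itemize}
The delicate point is membership of $\bvec\xi$ in $\Cspace0(\overline\Omega)^{2\times2}\cap\bvec H_{\DIV\VDIV,0}(\Omega)$, whose definition imposes the natural conditions $\bvec\xi\normal_\Omega=\bvec0$ and $\VDIV\bvec\xi\cdot\normal_\Omega=0$. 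These are not satisfied for the clamped problem. However, the test function $\dofh{e}$ lies in $\XSgrado{\Th}$: its boundary edge and vertex components vanish, and so do the boundary components of $\SGRAD{h}\dofh{e}$.

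So the main step is to revisit the proof of \eqref{eq:adj.const.hess} with the roles of the boundary conditions swapped. In each ingredient — the adjoint consistency of $\tGRAD{h}$, the consistency of $(\cdot,\cdot)_{1,T}$, and \eqref{eq:adjoint.const.Sgrad} — the boundary terms arise as sums over $E\in\Eh^{\rm b}$ of integrals pairing traces of $\bvec\xi$ (or of $\VDIV\bvec\xi$) with boundary components of $\dofh{e}$ or of $\SGRAD{h}\dofh{e}$. Each such term is killed by the vanishing of the discrete components rather than by the continuous traces. Interior edge terms telescope, because $\bvec\xi$ and $\VDIV\bvec\xi$ have continuous traces: $u\in H_{k+4}(\Th)\cap H_{2}(\Omega)$ is smooth enough and solves the PDE, so $\DIV\VDIV\bvec\xi=f\in L^2$. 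I expect this verification, especially for the trace $\gammaS$ entering $\SPgrad$ and for the DDR($k+1$) adjoint of $\tGRAD{h}$, to be the main obstacle. I would handle it by rewriting the three summations of the previous proof as explicit edge sums and inspecting the boundary contributions one at a time.

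With \eqref{eq:adj.const.hess} in hand, the right-hand side is bounded by
\[
h^{k+2}\seminorm{H_{k+4}(\Th)}{u}\,\norm{\VROT,h}{\Hess{h}\dofh{e}}+\left(h^{k+3}\seminorm{H_{k+4}(\Th)}{\Delta u}+h^{k+1}\seminorm{H_{(k+1,2)}(\Th)^2}{\GRAD\Delta u}\right)\norm{1,h}{\SGRAD{h}\dofh{e}}.
\]
By the second Poincaré inequality of Theorem \ref{th:pc.hess}, $\norm{1,h}{\SGRAD{h}\dofh{e}}\lesssim\norm{\VROT,h}{\Hess{h}\dofh{e}}$. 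Dividing by $\norm{\VROT,h}{\Hess{h}\dofh{e}}$, the case where it vanishes being trivial, yields the stated estimate.
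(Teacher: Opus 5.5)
Your argument is the paper's proof. Testing the scheme with $\dofh{e}$ is what the third Strang lemma does here with coercivity constant $1$, and the paper likewise combines $f=\DIV\VDIV\hess u$, the commutation property \eqref{eq:com.prop.hess}, the adjoint consistency \eqref{eq:adj.const.hess} with $\bvec\xi=\mp\hess u$, the second Poincar\'e inequality of Theorem~\ref{th:pc.hess} and $\VDIV\hess u=\GRAD\Delta u$. You go further on the boundary conditions, and rightly so: the paper applies \eqref{eq:adj.const.hess} directly even though $\hess u\notin\bvec{H}_{\DIV\VDIV,0}(\Omega)$ for a clamped plate. Your mechanism is the right repair, though you only sketch it: the boundary-edge terms in each ingredient vanish because $\gammaS\dofT{e}$ and the trace of $\SGRAD{h}\dofh{e}\in\tXdRgrado{\Th}$ are zero on boundary edges.
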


\begin{proof}
  Endowing $\XSgrado{\Th}$ with the norm $\norm{\VROT,h}{\Hess{h}\cdot}$ (see Remark \ref{eq:norm.VROT.h.Hess}), the bilinear form in the scheme  \eqref{eq:discrete.kl} is obviously coercive (with constant $1$).
  We can therefore apply the third Strang Lemma \cite[Theorem 10]{Di-Pietro.Droniou:18} with
  $a_h(\cdot,\cdot) \coloneq (\cdot,\cdot)_{\VROT,h}$ and
  $l_h(\cdot) \coloneq ( f, \SPgradh \cdot )_{L^2(\Omega)}$
  to obtain the initial estimate
  \begin{equation}\label{eq:kl:basic.estimate}
    \norm{\VROT,h}{\Hess{h}(\dofh{u} - \ISgrad u)} \lesssim
    \sup_{\dofh{q}\in\XSgrado{\Th}\setminus \left\{ 0 \right\}}\frac{\mathcal{E}(u,\dofh{q})
    }{\norm{\VROT,h}{\Hess{h}\dofh{q}}},
  \end{equation}
  where the consistency error $\mathcal{E}(u,\cdot): \XSgrado{\Th} \to \Real$ is defined by
  \[
  \mathcal{E}(u, \dofh{q})\coloneq
  ( f, \SPgradh \dofh{q})_{L^2(\Omega)}-( \Hess{h} \ISgrad u, \Hess{h} \dofh{q})_{\VROT,h}.
  \]
  It remains to estimate the right-hand side of \eqref{eq:kl:basic.estimate}.
  For all $\dofh{q}\in\XSgrad{\Th}$, using $f=\Delta^2 u=\DIV\VDIV\hess u$ together with the commutation property \eqref{eq:com.prop.hess}, we obtain
  \[
  \begin{aligned}
    &\mathcal{E}(u,\dofh{q}) = \sum_{T\in\Th} \left[ ( \DIV \VDIV \hess u, \SPgrad \dofT{q} )_{L^2(T)} -( \IdRrotT \hess u , \Hess{T} \dofT{q})_{\VROT,T}\right]
    \\
    &\quad\lesssim \left(
    h^{k+2} \seminorm{H_{k+2}(\Th)^{2\times 2}}{\hess u}
    +h^{k+3} \seminorm{H_{k+3}(\Th)^2}{\VDIV \hess u}
    + h^{k+1}  \seminorm{H_{(k+1,2)}(\Th)^2}{\VDIV\hess u}
    \right)
    \norm{\VROT,h}{\Hess{h}\dofh{q}}\\
    &\quad\lesssim \left(
    h^{k+2} \seminorm{H_{k+2}(\Th)^{2\times 2}}{\hess u}
    +h^{k+3} \seminorm{H_{k+4}(\Th)}{\Delta u}
    + h^{k+1}  \seminorm{H_{(k+1,2)}(\Th)^2}{\GRAD \Delta u}
    \right)
    \norm{\VROT,h}{\Hess{h}\dofh{q}},
  \end{aligned}
  \]
  where the estimate follows from the adjoint consistency result \eqref{eq:adj.const.hess} with $\bvec{\xi} = -\hess u$ together with the Poincaré inequality stated in Theorem \ref{th:pc.hess} and the identity $\VDIV \hess u = \GRAD \Delta u$.
  Plugging this estimate into \eqref{eq:kl:basic.estimate} and simplifying concludes the proof. 
\end{proof}

\subsection{Numerical results}

This section presents numerical results that demonstrate the correct behavior of the scheme \eqref{eq:discrete.kl}, in agreement with the convergence result established in Theorem \ref{th:error.estimate}. The computational domain is $\Omega = (0,1)^2$, and we consider meshes that are Cartesian, hexagonal and triangular, as shown in Figure \ref{fig:meshes}.

\begin{figure}\centering
  \begin{minipage}{0.3\textwidth}\centering
    \includegraphics[height=4cm]{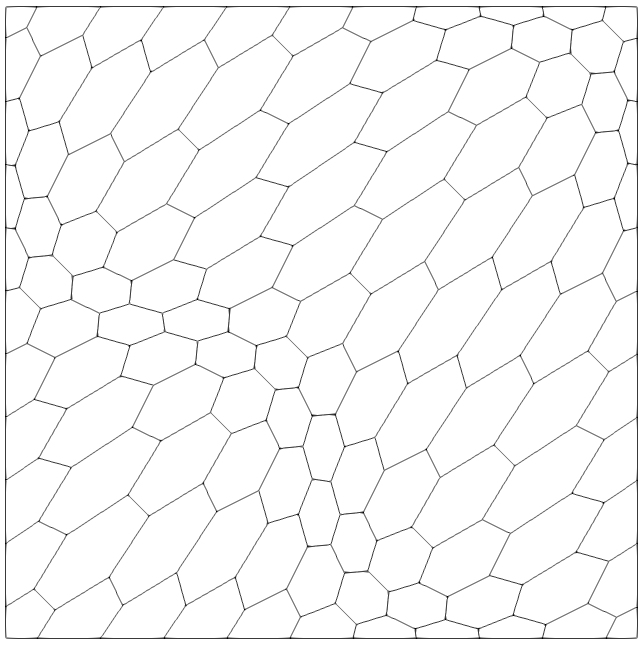}
    \subcaption{Hexagonal mesh\label{fig:hexa}}
  \end{minipage}
  \hspace{0.25cm}
  \begin{minipage}{0.3\textwidth}\centering
    \includegraphics[height=4cm]{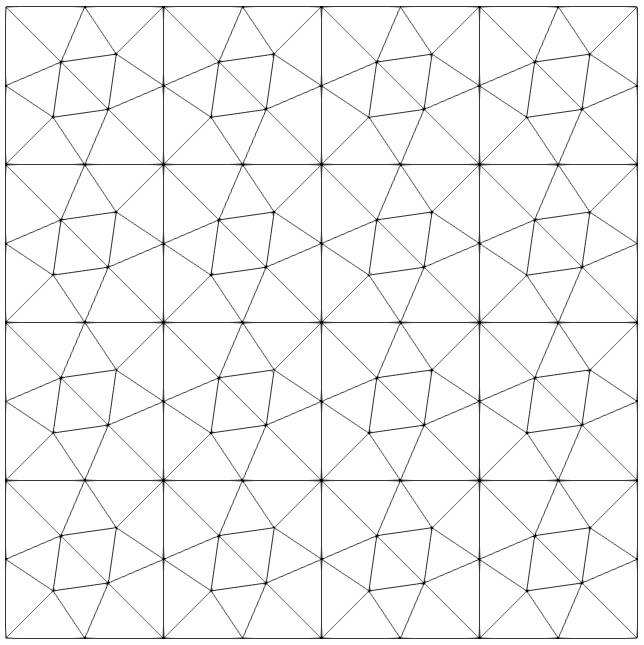}
    \subcaption{Triangular mesh}
  \end{minipage}
  \vspace{0.25cm}
  \begin{minipage}{0.3\textwidth}\centering
    \includegraphics[height=4cm]{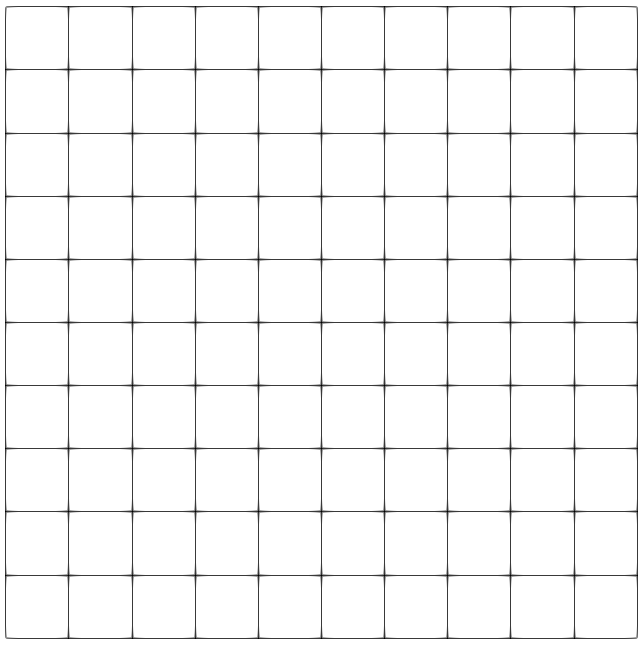}
    \subcaption{Cartesian mesh}
  \end{minipage}
  \caption{Meshes from the families used in the numerical tests.\label{fig:meshes}}
\end{figure}

We conduct two series of tests. In the first, we consider a problem with manufactured right-hand side yielding the trigonometric solution given by $u(\mathbf{x}) = \sin(\pi x_1)\sin(\pi x_2)$.
We define the relative energy error $E_h$ as
\begin{equation}\label{eq:def.Eh}
  E_h\coloneq \frac{\norm{\VROT,h}{\Hess{h}(\dofh{u} - \ISgrad u)}}{\norm{\VROT,h}{\Hess{h} \ISgrad u}},
\end{equation}
where $u$ and $\dofh{u}$ are the exact and the approximate solution, respectively.
The results are presented in Figure \ref{fig:trigo.sol} for different degrees $k\in \llbracket 0, 3\rrbracket$, and show a convergence of the energy error with rate $h^{k+1}$ as expected.

\begin{figure}\centering
  \ref{conv.hexa}
  \vspace{0.50cm}\\
  \begin{minipage}{0.45\textwidth}
    \begin{tikzpicture}[scale=0.85]
      \begin{loglogaxis}
        \addplot [thick, mark=star, red] table[x=MeshSize,y=EnError] {outputs/trigo_solution/hexa_k0/data_rates.dat};
        \logLogSlopeTriangle{0.90}{0.4}{0.1}{1}{black};
        \addplot [thick, mark=*, blue] table[x=MeshSize,y=EnError] {outputs/trigo_solution/hexa_k1/data_rates.dat};
        \logLogSlopeTriangle{0.90}{0.4}{0.1}{2}{black};
        \addplot [thick, mark=square*, black] table[x=MeshSize,y=EnError] {outputs/trigo_solution/hexa_k2/data_rates.dat};
        \logLogSlopeTriangle{0.90}{0.4}{0.1}{3}{black};
      \end{loglogaxis}
    \end{tikzpicture}
    \subcaption{Hexagonal mesh}
  \end{minipage}
  \begin{minipage}{0.45\textwidth}
    \begin{tikzpicture}[scale=0.85]
      \begin{loglogaxis}
        \addplot [thick, mark=star, red] table[x=MeshSize,y=EnError] {outputs/trigo_solution/tri_k0/data_rates.dat};
        \logLogSlopeTriangle{0.90}{0.4}{0.1}{1}{black};
        \addplot [thick, mark=*, blue] table[x=MeshSize,y=EnError] {outputs/trigo_solution/tri_k1/data_rates.dat};
        \logLogSlopeTriangle{0.90}{0.4}{0.1}{2}{black};
        \addplot [thick, mark=square*, black] table[x=MeshSize,y=EnError] {outputs/trigo_solution/tri_k2/data_rates.dat};
        \logLogSlopeTriangle{0.90}{0.4}{0.1}{3}{black};
      \end{loglogaxis}
    \end{tikzpicture}
    \subcaption{Triangular mesh}\label{fig:trigo.tri}
  \end{minipage}
  \vspace{0.25cm}\\
  \begin{minipage}{0.45\textwidth}
    \begin{tikzpicture}[scale=0.85]
      \begin{loglogaxis}[legend columns=3, legend to name=conv.hexa]
        \addplot [thick, mark=star, red] table[x=MeshSize,y=EnError] {outputs/trigo_solution/cart_k0/data_rates.dat};
        \addlegendentry{$k=0$}
        \logLogSlopeTriangle{0.90}{0.4}{0.1}{1}{black};
        \addplot [thick, mark=*, blue] table[x=MeshSize,y=EnError] {outputs/trigo_solution/cart_k1/data_rates.dat};
        \addlegendentry{$k=1$}
        \logLogSlopeTriangle{0.90}{0.4}{0.1}{2}{black};
        \addplot [thick, mark=square*, black] table[x=MeshSize,y=EnError] {outputs/trigo_solution/cart_k2/data_rates.dat};
        \addlegendentry{$k=2$}
        \logLogSlopeTriangle{0.90}{0.4}{0.1}{3}{black};
      \end{loglogaxis}
    \end{tikzpicture}
    \subcaption{Cartesian mesh}
  \end{minipage}
  \caption{Error $E_h$ (see \eqref{eq:def.Eh}) with respect to the meshsize $h$ for the trigonometric solution. \label{fig:conv.trigo}}
  \label{fig:trigo.sol}
\end{figure}

For the second test, a Dirac force is applied to the plate at location $(0.325, 0.732)$. Figure \ref{fig:visu} provides a visual representation of the approximate solution obtained on the finest triangular mesh (corresponding to $h=0.008$) and polynomial degree $k=2$. This reference solution yields an approximate energy value of $0.05513$.
The aim of this test is to illustrate one of the key advantages of polygonal schemes by comparing the convergence toward this reference energy using a family of meshes (see Figure \ref{fig:loc_ref}) that are locally refined around the Dirac point. Such locally refined meshes have ``hanging nodes'' that make them difficult to handle with standard finite element methods; on the contrary, schemes supporting generic polygonal elements can seamlessly handle such nodes, by splitting physical edges (the cells containing these nodes are no longer squares, but pentagons, hexagons, etc.). The corresponding results are reported in Figure \ref{fig:dirac.energy}, the error on the energy being defined as:
\begin{equation}\label{eq:def.Eh.dirac}
  \tilde{E}_h\coloneq \norm{\VROT,h}{\Hess{h}(\dofh{u})} - 0.05513,
\end{equation}
where $\dofh{u}$ is the approximate solution.
For a given target accuracy, the locally refined meshes require fewer degrees of freedom than the standard mesh families. Moreover, the reference value is reached faster with local refinement than other mesh families.
The left panel of Figure~\ref{fig:dirac.energy} shows that, for $k=0$, the hexagonal meshes are less efficient than the other mesh families, as a larger number of degrees of freedom is required to achieve a comparable accuracy. The same figure also illustrates one of the advantages of the present method, namely its arbitrary-order construction. Indeed, the right panel shows that, on hexagonal meshes, increasing the polynomial degree significantly improves the approximation quality for a given number of degrees of freedom. This higher-order approximation compensates for the lower efficiency observed at the lowest order and allows the reference value to be reached more rapidly.

\begin{figure}\centering
  \includegraphics[height=5cm]{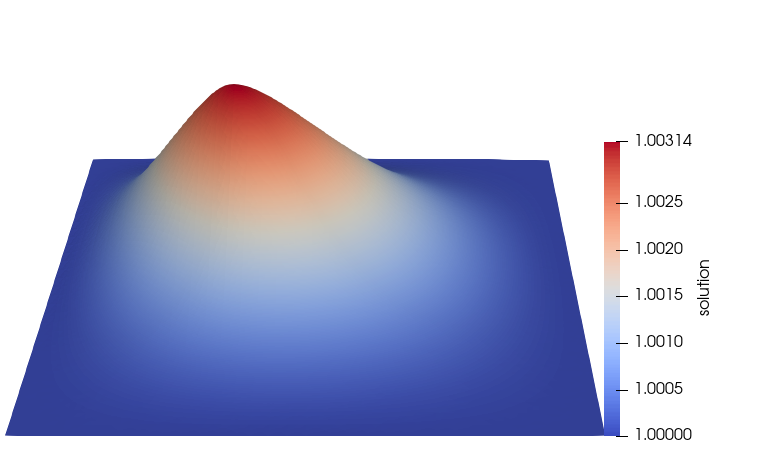}
  \caption{Approximate solution for a Dirac mass source term, located at $(0.325, 0.732)$.\label{fig:visu}}
\end{figure}

\begin{figure}\centering
  \begin{minipage}{0.3\textwidth}\centering
    \includegraphics[height=4cm]{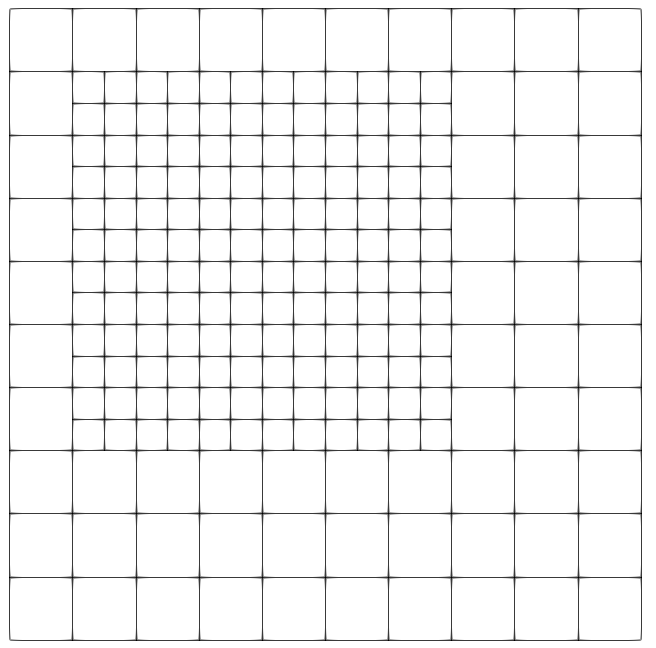}
  \end{minipage}
  \hspace{0.25cm}
  \begin{minipage}{0.3\textwidth}\centering
    \includegraphics[height=4cm]{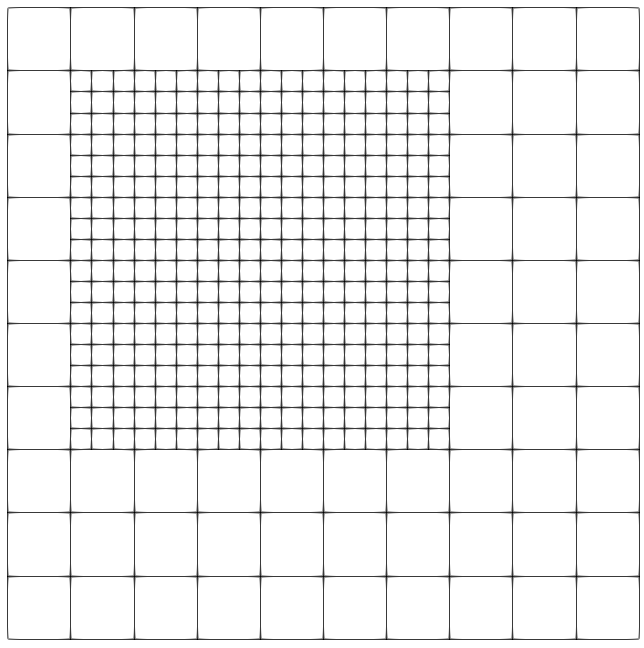}
  \end{minipage}
  \vspace{0.25cm}
  \begin{minipage}{0.3\textwidth}\centering
    \includegraphics[height=4cm]{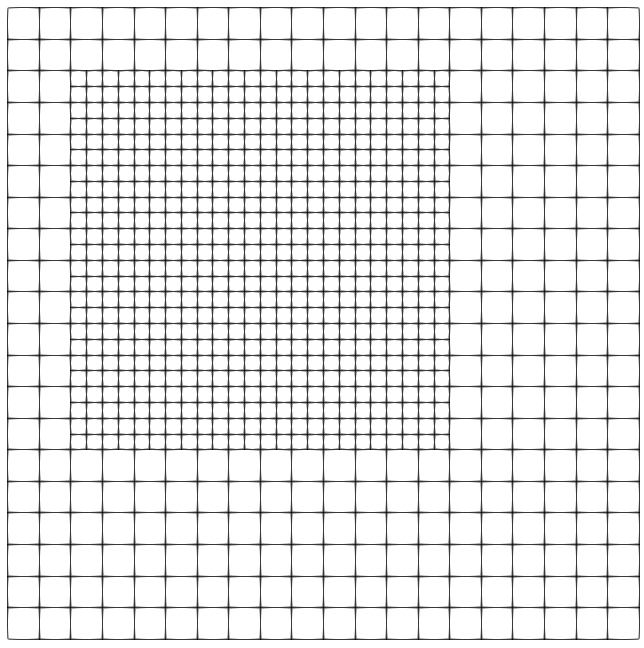}
  \end{minipage}
  \caption{Locally refined Cartesian meshes.}
  \label{fig:loc_ref}
\end{figure}

\begin{figure}\centering

  \begin{minipage}{0.48\textwidth}
    \centering
    \begin{tikzpicture}[scale=0.80]
      \begin{semilogxaxis}[
          xlabel={Number of DOFs in $\XSgrad{\Th}$},
          ylabel={$\tilde{E}_h$},
          legend pos=south east
        ]
        \addplot [thick, mark=star, red] table[x=DimXHess,y=EnApprox] {outputs/dirac_solution/cart_k0/data_rates.dat};
        \addlegendentry{cartesian mesh}
        \addplot [thick, mark=*, blue] table[x=DimXHess,y=EnApprox] {outputs/dirac_solution/cart_loc_ref_k0/data_rates.dat};
        \addlegendentry{locally refined mesh}
        \addplot [thick, mark=square*, black] table[x=DimXHess,y=EnApprox] {outputs/dirac_solution/tri_k0/data_rates.dat};
        \addlegendentry{triangular mesh}
        \addplot [thick, mark=o, olive] table[x=DimXHess,y=EnApprox] {outputs/dirac_solution/hexa_k0/data_rates.dat};
        \addlegendentry{hexagonal mesh}
      \end{semilogxaxis}
    \end{tikzpicture}

    \vspace{0.15cm}
    \centerline{(a) $k=0$, various mesh families.}
  \end{minipage}
  \hfill
  \begin{minipage}{0.48\textwidth}
    \centering
    \begin{tikzpicture}[scale=0.80]
      \begin{semilogxaxis}[
          xlabel={Number of DOFs in $\XSgrad{\Th}$},
          ylabel={$\tilde{E}_h$},
          legend pos=south east
        ]
        \addplot [thick, mark=o, olive] table[x=DimXHess,y=EnApprox] {outputs/dirac_solution/hexa_k0/data_rates.dat};
        \addlegendentry{$k=0$}
        \addplot [thick, mark=square*, black] table[x=DimXHess,y=EnApprox] {outputs/dirac_solution/hexa_k1/data_rates.dat};
        \addlegendentry{$k=1$}
        \addplot [thick, mark=triangle*, blue] table[x=DimXHess,y=EnApprox] {outputs/dirac_solution/hexa_k2/data_rates.dat};
        \addlegendentry{$k=2$}
      \end{semilogxaxis}
    \end{tikzpicture}

    \vspace{0.15cm}
    \centerline{(b) Hexagonal meshes, various $k$.}
  \end{minipage}

  \caption{Error $\tilde{E}_h$ defined in \eqref{eq:def.Eh.dirac} for the Dirac right-hand side, with respect to the number of DOFs in $\XSgrad{\Th}$. Left: comparison of several mesh families for $k=0$. Right: comparison of several polynomial degrees on hexagonal meshes.}
  \label{fig:dirac.energy}
\end{figure}

\section*{Acknowledgements}

The authors acknowledge the funding of the European Union via the ERC Synergy, NEMESIS, project number 101115663.

KH was supported in part by the ERC Starting project GeoFEM, project number 101164551, and a Royal Society University Research Fellowship (URF$\backslash$R1$\backslash$221398). Part of the work was carried out during KH's visit at University of Montpellier supported by the NEMESIS project. KH is grateful for the hospitality and the support from NEMESIS.

Views and opinions expressed are however those of the authors only and do not necessarily reflect those of the European Union or the European Research Council Executive Agency. Neither the European Union nor the granting authority can be held responsible for them.


\appendix

\section{Abstract framework for the transfer of Poincaré inequalities}\label{appendix:abstract.poincare}

In this appendix, we develop an abstract framework to transfer Poincar\'e inequalities between complexes connected by cochain maps.
Consider the diagram \eqref{eq:cohomology.double} below, where, for $i\in\{ 0,1 \}$, the spaces $X_i$ and $\hat X_i$ are endowed with inner products $(\cdot,\cdot)_{X,i}$ and $(\cdot,\cdot)_{\hat X,i}$, inducing, respectively, the norms $\norm{X,i}{{\cdot}}$ and $\norm{\hat X,i}{{\cdot}}$, and where the maps $E_i$ and $\hat{R}_i$ are continuous cochain maps.
\begin{equation}
  \label{eq:cohomology.double}
  \begin{tikzcd}[column sep=4.5em, row sep=3em]
    X_0\arrow{r}{d} \arrow[d, bend left, "{\hat R_0}"] & X_1\arrow[d, bend left, "\hat R_1"]\\
    {\hat{X}_0} \arrow{r}{\hat d} \arrow[u, bend left, "E_0"] & {\hat{X}_1}\arrow[u, bend left, "E_1"]
  \end{tikzcd}
\end{equation}
A blueprint to transfer Poincar\'e inequalities (and other algebraic and analytical properties) from one complex to another
has been developed in \cite{Di-Pietro.Droniou:23*1}. Typically, as illustrated in Diagram \ref{eq:doublecomplex_ddr0_stokes}, the mappings
$(\hat R_i)_i$ are reductions that remove some information from the richer complex $(X_i)_i$, while $(E_i)_i$ are extensions from a poorer complex $(\hat X_i)_i$. The framework of \cite{Di-Pietro.Droniou:23*1} only allows to transfer properties from the richer to the poorer complex, as Assumption (C1) in this reference requires that, when going from the poorer complex back into itself through the richer complex, no information must be lost.

However, for the purpose of Section \ref{sec:poincare}, we need to transfer Poincaré inequalities from a poorer complex (namely, the DDR complex of degree $0$) into a richer complex (the DS of degree $k$). We therefore have to develop a more general framework, which requires an additional assumption (Assumption \ref{ass:pc.reduced}) making up for the loss of information incurred when going from $(X_i)_i$ back to itself through $(\hat X_i)_i$. In practical cases, this assumption essentially boils down to assuming that \emph{local} Poincaré inequalities hold for the top sequence, as illustrated in the proofs of Propositions \ref{prop:pc.grad.coho} and \ref{prop:pc.curl.coho}.

\begin{assumption}[Poincaré inequality on $\Image(E_0\hat R_0-\Id)$]
  \label{ass:pc.reduced}
  There exists $C_{\rm P}\ge 0$, such that, for all $x\in X_0$, there exists $z\in X_0$ satisfying
  \begin{equation}
    \label{eq:loc.pc.cohomology}
    d(E_0 \hat R_0 x - x)=dz \quad\text{ and }\quad\norm{X,0}{z} \leq C_{\rm P}\norm{X,1}{dz}.
  \end{equation}
\end{assumption}

\begin{proposition}[Transfer of Poincaré inequality]
  \label{prop:pc.from.reduced.spaces}
  We suppose that Assumption \ref{ass:pc.reduced} holds, and that the bottom sequence of \eqref{eq:cohomology.double} satisfies the following Poincaré inequality:
  There exists $\hat{C}_{\rm P} \geq 0$ such that
  \begin{equation}
    \label{eq:pc.on.hat.x.cohomology}
    \norm{\hat X,0}{\hat x} \leq \hat C_{\rm P}\norm{\hat X,1}{\hat d\hat x} \qquad \forall \hat x \in (\Ker \hat d)^{\perp}.
  \end{equation}
  Then, the top sequence satisfies the following Poincaré inequality:
  \begin{equation}\label{eq:poincare.by.transfer}
    \norm{X,0}{x} \leq
    \left[
      \hat C_{\rm P}\norm{}{E_0}\norm{}{\hat R_1}+C_{\rm P}(\norm{}{E_1}\norm{}{\hat R_1}+1)
      \right]
    \norm{X,1}{dx}\qquad \forall x \in (\Ker d)^{\perp},
  \end{equation}
  where, for $\mathcal L\in\{E_1,\hat R_1,E_0\}$, $\norm{}{\mathcal L}$ denotes the mapping norm induced by the norms on $(X_i)_{i=0,1}$ and $(\hat X_i)_{i=0,1}$.
\end{proposition}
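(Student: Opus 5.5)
Given $x\in(\Ker d)^\perp$, the plan is to split it as $x = E_0\hat R_0 x - (E_0\hat R_0 x - x)$, using Assumption~\ref{ass:pc.reduced} for the second piece and the bottom Poincaré inequality \eqref{eq:pc.on.hat.x.cohomology} for the first. The orthogonality condition is then used at the very end through a minimal-norm argument. For $x\in(\Ker d)^\perp$ we have $\norm{X,0}{x}\le\norm{X,0}{y}$ for every $y\in X_0$ with $dy=dx$, because $x$ is the orthogonal projection of any such $y$ onto $(\Ker d)^\perp$. So it suffices to build one $y$ with $dy=dx$ and $\norm{X,0}{y}$ bounded by the bracketed constant times $\norm{X,1}{dx}$.

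First apply Assumption~\ref{ass:pc.reduced} to $x$. This gives $z\in X_0$ with $dz=d(E_0\hat R_0x-x)$ and $\norm{X,0}{z}\le C_{\rm P}\norm{X,1}{dz}$. Since $E$ and $\hat R$ are cochain maps, $dz = E_1\hat R_1 dx - dx$, hence
\[
\norm{X,1}{dz}\le(\norm{}{E_1}\norm{}{\hat R_1}+1)\norm{X,1}{dx}.
\]
Next treat $\hat R_0x$. Let $\hat x$ be its orthogonal projection onto $(\Ker\hat d)^\perp$, so $\hat d\hat x=\hat d\hat R_0x=\hat R_1dx$. By \eqref{eq:pc.on.hat.x.cohomology},
\[
\norm{\hat X,0}{\hat x}\le\hat C_{\rm P}\norm{}{\hat R_1}\norm{X,1}{dx}.
\]

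Now set $y\coloneq E_0\hat x - z$. Then
\[
dy = E_1\hat d\hat x - dz = E_1\hat R_1dx - (E_1\hat R_1dx - dx) = dx,
\]
and
\[
\norm{X,0}{y}\le\norm{}{E_0}\norm{\hat X,0}{\hat x}+\norm{X,0}{z}.
\]
Combining this with the two bounds above yields exactly the constant in \eqref{eq:poincare.by.transfer}, and the minimal-norm argument then transfers the bound to $x$.

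Each step is elementary. The one point needing care is the replacement of $\hat R_0x$ by its projection $\hat x$: this is what allows the bottom Poincaré inequality to be applied even though $\hat R_0 x$ itself need not lie in $(\Ker\hat d)^\perp$. It is legitimate because $E_0$ only enters through $d E_0\hat x = E_1\hat d\hat x$, and $\hat d$ is unchanged by the projection.
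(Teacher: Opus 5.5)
Your proof is correct and is essentially the paper's argument: the same $\hat x\in(\Ker\hat d)^\perp$ with $\hat d\hat x=\hat d\hat R_0x$, the same $z$ from Assumption~\ref{ass:pc.reduced}, and the same two bounds. Your minimal-norm step for $y=E_0\hat x-z$ is just a rephrasing of the paper's orthogonality relation $(x+z-E_0\hat x,x)_{X,0}=0$, which gives $\norm{X,0}{x}\le\norm{X,0}{E_0\hat x}+\norm{X,0}{z}$.
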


\begin{proof}
  The proof is inspired by some arguments in the proof of \cite[Proposition 4]{Di-Pietro.Droniou:23*1}. Let $x\in(\Ker d)^{\perp}$ and take $\hat x \in(\Ker \hat d)^{\perp}$ such that
  \begin{equation}\label{eq:dR0x.dy}
    \hat d \hat R_0 x = \hat d \hat x,
  \end{equation}
  which is possible because $\hat d:(\Ker\hat d)^{\perp}\rightarrow \Image \hat d$ is an isomorphism. Apply $E_1$ to both sides of this equality and use the cochain map property to obtain
  \begin{equation}\label{eq:dERx.hatx}
    dE_0\hat R_0 x = d E_0 \hat x.
  \end{equation}
  By Assumption \ref{ass:pc.reduced}, there exists $z\in X_0$ such that \eqref{eq:loc.pc.cohomology} holds. By \eqref{eq:dERx.hatx}, we then get $(x+z-E_0 \hat x)\in\Ker d$, and thus $(x+z-E_0  \hat x,x)_{X,0}=0$ since $x\in(\Ker d)^\perp$.
  Developing, we infer that $\norm{X,0}{x}^2 = (E_0\hat x -z,x)_{X,0} \leq \left(\norm{X,0}{E_0 \hat x} + \norm{X,0}{z}\right) \norm{X,0}{x}$, and thus
  \begin{equation}
    \label{eq:pc.coho.pf.1}
    \norm{X,0}{x} \leq \norm{X,0}{E_0 \hat x}+ \norm{X,0}{z}.
  \end{equation}
  To bound the first term in the right-hand side of \eqref{eq:pc.coho.pf.1}, we write
  \begin{align}
    \norm{X,0}{E_0 \hat x}
    \leq  \norm{}{E_0} \norm{\hat X,0}{\hat x}
    \overset{\eqref{eq:pc.on.hat.x.cohomology}}&\leq \hat C_{\rm P} \norm{}{E_0}\norm{\hat X,1}{ \hat d \hat x}
    \nonumber\\
    \overset{\eqref{eq:dR0x.dy}}&= \hat C_{\rm P} \norm{}{E_0}\norm{\hat X,1}{\hat d \hat R_0 x}\nonumber\\
    &= \hat C_{\rm P} \norm{}{E_0}\norm{\hat X,1}{\hat R_1 d  x}
    \leq \hat C_{\rm P} \norm{}{E_0}\norm{}{\hat R_1}\norm{X,1}{d x},
    \label{eq:pc.coho.pf.2}
  \end{align}
  where we have used the cochain map property to write $\hat d \hat R_0 x = \hat R_1 d  x$. To bound the second term in the right-hand side of \eqref{eq:pc.coho.pf.1}, we notice that
  \begin{align}
    \label{eq:pc.coho.pf.3}
    \norm{X,0}{z}\overset{\eqref{eq:loc.pc.cohomology}}\leq C_{\rm P} \norm{X,1}{dz} \overset{E_1\hat R_1 d x - dx = dz}{\leq} C_{\rm P}\left(\norm{}{E_1}\norm{}{\hat R_1}+1\right)  \norm{X,1}{dx},
  \end{align}
  where the equality justifying the conclusion comes from the cochain map property applied to $dz=d(E_0\hat R_0 x-x)$.
  Plugging \eqref{eq:pc.coho.pf.2} and \eqref{eq:pc.coho.pf.3} into \eqref{eq:pc.coho.pf.1} concludes the proof.
\end{proof}

\section{Reconstruction of a higher degree potential on $\XSgrad{T}$}
\label{appendix:P.nabla.k+3.T}

An alternative way to \eqref{eq:def.SPGRAD} for defining a potential reconstruction on $\XSgrad{T}$ is through a higher-order discrete gradient built from $\Pgrad{k+2}{T}:\XSrot{T}\rightarrow\Poly{k+2}(T)^2$ (see \cite[Section 2 and 4.2.1]{Di-Pietro.Droniou:23*1}). Define, for each $T\in\Th$, the full gradient reconstruction $\nablaTfull:\XSgrad{T}\to\Poly{k+2}(T)^2$ by
\[
\nablaTfull \dof{q}{T} \coloneq \Pgrad{k+2}{T} \SGRAD{T} \dof{q}{T}\quad\forall \dof{q}{T}\in\XSgrad{T}.
\]
According to the commutation property \eqref{eq:commutation.grad} of $\SGRAD{T}$ and the consistency property \eqref{eq:const.pol.P.grad.k+2} of $\Pgrad{k+2}{T}$, this gradient is polynomially consistent of degree $k+2$, in the sense that
\begin{equation}\label{eq:nablaTfull.consistent}
  \nablaTfull\ISgradT q=\GRAD q\qquad\forall q\in\Poly{k+3}(T).
\end{equation}

A potential reconstruction $\SPgradd:\XSgrad{T}\to \Poly{k+3}(T)$ can then be constructed by defining, for $\dof{q}{T}\in\XSgrad{T}$, the polynomial $\SPgradd\dof{q}{T}\in\Poly{k+3}(T)$ such that
\begin{equation}\label{eq:def.SPgradd}
  \int_T \SPgradd \dof{q}{T}\DIV \bvec w = -\int_T \nablaTfull\dof{q}{T}\cdot \bvec w+ \sum_{E\in\ET}\omega_{TE}\int_{E}(\gammaSfull{T}\underline{q}_T)_{|E}\,\bvec w\cdot\normal_{E} \quad \forall \bvec w\in \cRoly{k+4}(T),
\end{equation}
where the reconstruction $\gammaSfull{\partial T}:\XSgrad{T}\to\Poly{k+3}_{\rm c}(\ET)$ is defined by imposing \eqref{eq:def.gammaS} (as for $\gammaS\dof{q}{T}$) together with $\partial_{\tangent_E} (\gammaSfull{\partial T}\underline{q}_T)_{|E}(\bvec{x}_V)=\Gqv\cdot\tangent_E$ for all $E\in\ET$ and $V\in\VE$.
From \eqref{eq:nablaTfull.consistent} and the fact that $\gammaSfull{\partial T}\ISgradT q=q_{|\partial T}$ whenever $q\in\Poly{k+3}(T)$, we get the polynomial consistency property
\begin{equation*}
  \SPgradd \ISgradT q = q \qquad \forall q\in\Poly{k+3}(T).
\end{equation*}

Comparing with \eqref{eq:poly.consistency.Pgrad}, the potential $\SPgradd$ has a higher degree of primal consistency than $\SPgrad$ defined in \eqref{eq:def.SPGRAD}, but it seems to lack this higher accuracy for adjoint consistency. Let us briefly explain why. As seen in the proof of \cite[Theorem 9]{Di-Pietro.Droniou:23} (see also Remark \ref{rem:extention.SPGRAD}), the adjoint consistency relies on being able to use, in the definition \eqref{eq:def.SPgradd} of $\SPgradd$, test functions in $\Roly{k+2}(T)$. If that were possible, then for all $\bvec w \in \Roly{k+2}(T)$, choosing $\bvec{\zeta}\in\cRoly{k+3}(T)^2$ such that $\bvec{w}=\VDIV\bvec{\zeta}\in \Roly{k+2}(T)$ in \eqref{eq:def.SPgradd}  would lead to (using the definition \cite[Eq.~(4.1)]{Di-Pietro.Droniou:23} of $\Pgrad{k+2}{T}$):
\begin{align*}
  \int_T \underbrace{\nablaTfull\dofT{q}}_{=\Pgrad{k+2}{T} \SGRAD{T} \dof{q}{T}} \cdot \VDIV \bvec{\zeta} &{}= -\int_T \boldsymbol{\mathsf{G}}_T^{k+1}\SGRAD{T}\dof{q}{T} :\bvec{\zeta} +\sum_{E\in\ET}\omega_{TE}\int_{E}\gammadR{T}(\SGRAD{T}\dofT{q}) \cdot (\bvec{\zeta} \normal_E),
\end{align*}
where $\boldsymbol{\mathsf{G}}_T^{k+1}$ is a serendipity gradient. For $\SPgrad$, the term equivalent to $\boldsymbol{\mathsf{G}}_T^{k+1}\SGRAD{T}$ involves a discrete rotor instead of $\boldsymbol{\mathsf{G}}_T^{k+1}$ and therefore vanishes by complex property (see the cancellation in \eqref{eq:Prot.valid.w}). However, this cancellation does not occur here, which prevents us from completing the proof of this adjoint consistency.

\section{Notation}\label{appendix:notations}

  \begin{center}
    \renewcommand{\arraystretch}{1.2}
    \begin{longtable}{ccc}
      \toprule
      \textbf{Symbol} & \textbf{Continuous counterpart} & \textbf{Definition} \\
      \midrule
      \multicolumn{3}{c}{Discrete spaces} \\
      \midrule
      $\XSgrad{\Th}$
      & $H_2(\Omega)$
      & \eqref{eq:def.XSgrad} \\
      $\XSrot{\Th}$
      & $H_1(\Omega)^2$
      & \eqref{eq:def.Xgrad} \\
      $\tXdRrot{\Th}$
      & $\boldsymbol{H}_{\VROT}(\Omega)^2$
      & \eqref{eq:def.Xrot} \\
      \midrule
      \multicolumn{3}{c}{Interpolators} \\
      \midrule
      $\ISgrad$
      & ---
      & \eqref{eq:def.ISgrad} \\
      $\IdRgrad$
      & ---
      & \eqref{eq:def.IdRgrad} \\
      $\IdRrot$
      & ---
      & \eqref{eq:def.IdRrot} \\
      \midrule
      \multicolumn{3}{c}{Discrete differential operators} \\
      \midrule
      $\SGRAD{h}$
      & $\GRAD : H_2(\Omega) \to H_1(\Omega)$
      & \eqref{eq:def.nablah} \\
      $\SROT{h}$
      & $\ROT$
      & \eqref{eq:def.SROT} \\
      $\tGRAD{h}$
      & $\GRAD : H_1(\Omega)^2 \to \boldsymbol{H}_{\VROT}(\Omega)^2$
      & \cite{Di-Pietro.Droniou.ea:26} \\
      $\tROT{h}$
      & $\VROT$
      & \cite{Di-Pietro.Droniou.ea:26} \\
      \midrule
      \multicolumn{3}{c}{Discrete potentials} \\
      \midrule
      $\SPgradh$
      & $q\in H_2(\Omega) $
      &  \eqref{eq:def.SPGRAD} \\
      $\SProth$
      & $\bvec v \in H_1(\Omega)^2 $
      & \eqref{eq:def.potSrot} \\
      $\Pgrad{k+2}{h}$
      & $\bvec v \in H_1(\Omega)^2 $
      & \cite[Sec.~4]{Di-Pietro.Droniou:23} \\
      $ \Prot{k+1}{h}$
      &  $\bvec{\xi} \in \boldsymbol{H}_{\VROT}(\Omega)^2$
      & \cite[Sec.~4]{Di-Pietro.Droniou:23} \\
      \bottomrule
    \end{longtable}
\end{center}

\printbibliography


\end{document}